\newtheorem{theorem}{Theorem}[section]
\newtheorem{lemma}[theorem]{Lemma}
\newtheorem{corollary}[theorem]{Corollary}
\newtheorem{proposition}[theorem]{Proposition}
\theoremstyle{remark}
\newtheorem{remark}[theorem]{Remark}
\renewcommand{\epsilon}{\varepsilon}
\renewcommand{\phi}{\varphi}
\renewcommand{\kappa}{\varkappa}
\renewcommand{\tilde}{\widetilde}
\DeclarePairedDelimiter\ceil{\lceil}{\rceil}
\DeclarePairedDelimiter\floor{\lfloor}{\rfloor}
\newcommand{\IE}{\mathbb{E}}
\newcommand{\I}{\mathbbm{1}}
\newcommand{\IN}{\mathbb{N}}
\newcommand{\IP}{\mathbb{P}}
\newcommand{\IR}{\mathbb{R}}
\newcommand{\IZ}{\mathbb{Z}}
\newcommand{\tB}{B^0}
\newcommand{\tC}{C^0}
\newcommand{\tX}{X^0}
\newcommand{\cal}{\mathcal}
\newcommand{\meas}{{\text{meas}}}
\newcommand{\eps}{{\varepsilon}}
\begin{document}
\title[Excited random walks]{Excursions and occupation times of\\
  critical excited random walks}

\author{Dmitry Dolgopyat and Elena Kosygina}\thanks{\textit{2010
    Mathematics Subject Classification.}  Primary: 60K37, 60F17,
  60J80. Secondary: 60J60.}  \thanks{\textit{Key words:} random cookie
  environment, branching process, diffusion approximation, perturbed
  Brownian motion, beta distribution.}
\begin{abstract}
  We consider excited random walks (ERWs) on integers in i.i.d.\
  environments with a bounded number of excitations per site.  The
  emphasis is primarily on the critical case for the transition
  between recurrence and transience which occurs when the total
  expected drift $\delta$ at each site of the environment is equal to
  1 in absolute value. Several crucial estimates for ERWs fail in the
  critical case and require a separate treatment.  The main results
  discuss the depth and duration of excursions from the origin for
  $|\delta|=1$ as well as occupation times of negative and positive
  semi-axes and scaling limits of ERW indexed by these occupation
  times. We also point out that the limiting proportions of the time spent
  by a non-critical recurrent ERW (i.e.\ when $|\delta|<1$) above or
  below zero converge to beta random variables with explicit
  parameters given in terms of $\delta$. The last observation can be
  interpreted as an ERW analog of the arcsine law for the
  simple symmetric random walk.
\end{abstract}
\maketitle

\section{Introduction and main results}

\subsection{Model description}

We consider an exited random walk (ERW) on $\IZ$ with nearest neighbor
jumps which evolves in a random ``cookie environment''. Each site of
the lattice contains a stack of ``cookies''
$\omega_x:=(\omega_x(1),\omega_x(2),.\dots)$. A cookie
$\omega_x(i)\in [0,1]$, $x\in\IZ$, $i\in\IN$, encodes the probability
that the walk jumps to the right upon the $i$-th visit to $x$. We
assume that the cookie stacks $\omega_x$, $x\in\IZ$, are spatially
i.i.d.\ and that there is a non-random $M\ge 0$, the number of
excitations per site, such that $\omega_x(i)=1/2$ for all $i>M$ and
$x\in\IZ$, i.e.\ starting from the $(M+1)$-th visit to a site the walk
makes only unbiased jumps from this site.

More formally, we suppose that an environment
$\omega\in\Omega=[0,1]^{\IZ\times\IN}$ is chosen according to a
probability measure $\IP$ which satisfies the following three
assumptions.
\begin{itemize}
\item [(IID)] (Independence) The cookie stacks $\omega_x(\cdot)$,
  $x\in\IZ$, are i.i.d.\ under $\IP$.
\item [(WEL)] (Weak ellipticity) For all
  $x\in\IZ$ \[\IP(\omega_x(i)>0\ \forall
  i\in\IN)>0\quad\text{and}\quad \IP(\omega_x(i)<1\ \forall i\in\IN)>0.\]
\item [($\mathrm{BD_M}$)] (Bounded number of excitations per site)
  $\IP(\omega_x(i)=1/2\ \forall x\in\IZ,\ i>M)=1$.
\end{itemize}
Given an environment $\omega\in\Omega$, we shall use the usual
coin-toss construction of a random walk, albeit we should keep a
record of the number of visits of the walk to each site and use
appropriately biased coins for the first $M$ visits to each
site. Namely, let $(\eta_x(i))_{x\in\IZ,i\in\IN}$ be independent
(under some probability measure $P_\omega$) Bernoulli random variables
such that $P_\omega(\eta_x(i)=1)=1-P_\omega(\eta_x(i)=0)=\omega_x(i)$
for all $x\in\IZ$, $i\in\IN$.  Set $X_0=x$, $x\in\IZ$, and define
recursively \[X_{n+1}=X_n+2\eta_{X_n}(\#\{k\in\{0,1,\dots,n\}:\,X_k=X_n\})-1,
\quad n\in\{0\}\cup\IN.\] The probability measure $P_{\omega,x}$
induced on the space of random walk paths which start from $x$ is
called the {\em quenched} measure. The probability measure on the
product space of environments and random walk paths originating at $x$
defined by \[P_x(\cdot)=\IE
\left[P_{\omega,x}(\cdot)\right]=\int_\Omega
P_{\omega,x}(\cdot)\,d\IP(\omega)\] is called the {\em averaged}
measure. Observe that ERW is not a Markov process with respect to
either of these measures.

Below we shall only quote the facts needed to put our results into
the context of previous work. For an overview of various ERW models,
methods, and results the reader is referred to \cite{KZ13}. 

\subsection{Excursions from the origin} Let $T_k:=\inf\{n\ge 0:
X_n=k\}$, $k\in\mathbb{Z}$, be the time of the first visit to $k$ and
$T^r_0:=\inf\{n\ge 1:\ X_n=0\}$ be the first strictly positive time at
which the random walk visits the origin.

Under our assumptions, several phase transitions are known to be
characterized by
\begin{equation}
  \label{del}
  \delta:=\IE\left[\sum_{i=1}^M(2\omega_0(i)-1)\right],
\end{equation}
the expected total drift stored in a single cookie stack.  
The excited random walk $(X_n)_{n\ge 0}$
\begin{enumerate}[(i)]
\item is transient, i.e.\ $|X_n|\to \infty$ $P_0$-a.s., iff $|\delta|>
  1$ (see \cite[Theorem 3.10]{KZ13} and the references therein or a
  combination of \cite[Corollary 7.10]{KZ14} and Remark~\ref{rt}
  below)\footnote{for $|\delta|\le 1$ $X$ is recurrent, i.e.\ returns
    to the origin infinitely often $P_0$-a.s..};
\item is ballistic, i.e.\ there is a constant $v\ne 0$ such that
  $P_0$-a.s.\ $\lim\limits_{n\to\infty}X_n/n=v$, iff $|\delta|>2$ (see
  \cite[Theorem 5.2]{KZ13} and the references therein);
\item is strongly transient, i.e.\
  $E^0\left[T^r_0\,|\,T^r_0<\infty\right]<\infty$, iff $|\delta|>3$ (see
  \cite[Corollary 1.2]{KZ14});
\item after diffusive scaling converges under $P_0$ to a Brownian
  motion iff $|\delta|>4$ or $\delta=0$ (see \cite[Theorems 6.1, 6.3, 6.5,
  6.7]{KZ13} and the references therein).
\end{enumerate}

\begin{remark}
  The velocity $v$ in (ii) as well as all constants $b,c,c_i$, $i\ge
  1$, which appear below depend on the distribution of a single cookie
  stack $\omega_0$ under $\IP$. They are not, in general, functions of
  $\delta$ (see \cite[Remark 5.8]{KZ13} for a discussion about $v$).
\end{remark}
The phase transition in (iii) emerged in the study of the depth and
duration of excursions of ERW. Since our first result is about
excursions in the critical case $|\delta|=1$ we shall first quote the
original relevant theorem.
\begin{theorem}[\cite{KZ14}, Theorem 1.1] \label{kz14}
  Assume that $\delta\in\IR\setminus\{1\}$. Then there are constants
  $c_1,c_2\in(0,\infty)$ such that
  \begin{align}\label{depth}
&\lim_{n\to\infty}n^{|\delta-1|}P_1(T_n<T_0<\infty)=c_1,\\\label{timed}
&\lim_{n\to\infty}n^{|\delta-1|/2}P_1(n<T_0<\infty)=c_2.
  \end{align}
Moreover, if $\delta=1$ then every $\epsilon>0$,
\begin{equation}
  \label{del1}
  \lim_{n\to\infty}n^\epsilon P_1(T_n<T_0)=\lim_{n\to\infty}n^\epsilon P_1(T_0>n)=\infty.
\end{equation}
If $|\delta|\ne 1$\footnote{In (1.5) of \cite{KZ14} both $\delta=1$ and $\delta=-1$ should have been excluded.} then there is a constant $c_3\in(0,\infty)$ 
such that
\begin{equation}
  \label{ret}
  \lim_{n\to\infty}n^{||\delta|-1|/2}P_0(n<T_0^r<\infty)=c_3.
\end{equation}
Moreover, if $|\delta|=1$ then for every $\epsilon>0$,
\begin{equation}
  \label{ret0}
  \lim_{n\to\infty}n^\epsilon P_0(T_0^r>n)=\infty.
\end{equation}
\end{theorem}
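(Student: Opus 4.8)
The plan is to route all five estimates through the \emph{backward branching process of edge crossings} of an excursion. Run the walk under $P_1$ up to $T_0$ and, for $k\ge1$, let $\zeta_k$ be the number of jumps from $k$ to $k-1$ (down-crossings of the edge $\{k-1,k\}$) made before $T_0$. Balancing up- and down-crossings at each site, and using that when $T_0<\infty$ the last departure from every site $k\ge1$ is to the left, one checks that $(\zeta_k)_{k\ge1}$ is a Markov chain with $\zeta_1=1$, that $0$ is absorbing, and that, given $\zeta_k=z\ge1$, $\zeta_{k+1}$ has the law of the number of successes strictly before the $z$-th failure in independent Bernoulli trials whose first $M$ success probabilities are $\omega_k(1),\dots,\omega_k(M)$ and all later ones $1/2$; as the stacks are i.i.d.\ this kernel is the same for every $k$. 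The dictionary is $\{T_n<T_0\}=\{\zeta_n\ge1\}$, $\{T_n<T_0<\infty\}=\{\zeta_n\ge1\}\cap\{\zeta\ \text{dies out}\}$, and $T_0=\sum_{k\ge1}(\zeta_k+\zeta_{k+1})$, so $T_0$ is comparable to the total progeny $\sum_k\zeta_k$. Everything is then driven by one computation: as $z\to\infty$, $\IE[\zeta_{k+1}-\zeta_k\mid\zeta_k=z]\to\delta$ and $\mathrm{Var}(\zeta_{k+1}\mid\zeta_k=z)=2z+o(z)$, so $\zeta$ is a branching process with migration of parameter $\delta$; after diffusive rescaling it converges to $\tfrac12$ times a squared Bessel process of dimension $2\delta$, whose critical dimension $2$ is exactly $\delta=1$.

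The engine for \eqref{depth} is the scale function of this diffusion. If $\delta<1$ (including $\delta=-1$), then $\mathrm{BESQ}(2\delta)$, with $2\delta<2$, is absorbed at $0$ and has scale function $s(x)=x^{1-\delta}$; hence from a microscopic starting value $\asymp n^{-1}$ it reaches a macroscopic level before absorption with probability $\asymp n^{-(1-\delta)}$ — \emph{polynomially} rather than exponentially, because the quadratic growth of the variance lets a lucky streak lift $\zeta$ to a large value even against a negative drift — and then survives a further time of order $n$. Transferring this to the discrete chain, and using that for $\delta<1$ the walk is recurrent so $P_1(T_n<T_0<\infty)=P_1(\zeta_n\ge1)$, gives $P_1(T_n<T_0<\infty)\sim c_1 n^{-(1-\delta)}$. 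For $\delta>1$ the chain is supercritical, $P_1(\zeta_n\ge1)\downarrow q:=P_1(\zeta\ \text{survives})=P_1(T_0=\infty)>0$, so $P_1(T_n<T_0<\infty)=P_1(\zeta_n\ge1)-q=P_1(\zeta_n\ge1,\ \zeta\ \text{dies})$; conditioning on extinction is a Doob $h$-transform turning $\zeta$ into the branching process with migration of the dual parameter $2-\delta<1$ (equivalently replacing $\mathrm{BESQ}(2\delta)$ by $\mathrm{BESQ}(4-2\delta)$), and the same analysis for that chain yields $P_1(T_n<T_0<\infty)\sim c_1 n^{-|(2-\delta)-1|}=c_1 n^{-(\delta-1)}$. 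Either way the exponent is $|\delta-1|$.

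For \eqref{timed}, conditionally on $\zeta$ reaching level of order $m$ and then dying its total progeny concentrates at order $m^2$, so $\{T_0>n\}$ is up to constants the event that $\zeta$ attains level of order $\sqrt n$, of probability $\asymp n^{-|\delta-1|/2}$ by the scale-function estimate; this gives $P_1(n<T_0<\infty)\sim c_2 n^{-|\delta-1|/2}$. For \eqref{ret}, split the first excursion from the origin under $P_0$ according to its first step: a right excursion is governed by the chain $\zeta$ (parameter $\delta$) and a left excursion by its mirror image (parameter $-\delta$), so $T_0^r$ has tail exponents $|\delta-1|/2$ and $|\delta+1|/2$ in the two cases and the heavier tail dominates; since $\min(|\delta-1|,|\delta+1|)=||\delta|-1|$ this is \eqref{ret}. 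Finally, when $|\delta|=1$ the branching process sits exactly at the critical dimension $2$: it still dies out a.s.\ by discreteness, but its survival probability is only slowly varying — of order an inverse power of $\log n$, as for a $2$-dimensional squared Bessel process — hence beats every power of $n$, which is \eqref{del1}; feeding this into the excursion decomposition, with the $\delta=\pm1$ side dominating, gives \eqref{ret0}.

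The real work is not the exponents, which the Bessel heuristic produces, but upgrading $\asymp$ to honest limits, i.e.\ the existence of the positive constants $c_1,c_2,c_3$. This requires a sharp treatment of the branching process with migration $\zeta$ — a renewal/generating-function argument, or precise asymptotics for critical branching processes with migration in the spirit of Vatutin and coauthors — together with enough uniform control on the law of the total progeny given the maximal level reached to pass rigorously from exponent $|\delta-1|$ to $|\delta-1|/2$. The cases $|\delta|=1$ are the most delicate, exactly because the diffusion approximation degenerates and the discrete chain has to be analysed by hand.
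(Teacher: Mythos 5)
This statement is quoted verbatim from \cite{KZ14} (Theorem~1.1 there); the present paper does not reprove it, so there is no ``paper's own proof'' to compare against — only the proof in the cited reference. That said, your sketch follows precisely the same strategy that \cite{KZ14} uses (and that this paper adapts for the $\delta=1$ case in Theorem~\ref{tbp}): encode an excursion by the Markov chain of edge down-crossings, recognize it as a branching process with bounded migration, approximate it diffusively by half a $\mathrm{BESQ}(2\delta)$, and read off the exponents from the scale function $x^{1-\delta}$, with the $h$-transform to parameter $2-\delta$ handling $\delta>1$. Your dictionary ($\{T_n<T_0<\infty\}=\{\zeta_n\ge1\}\cap\{\text{extinction}\}$, $T_0=2\sum_k\zeta_k-1$), the drift/variance computation $\IE[\Delta\zeta\mid\zeta=z]\to\delta$, $\mathrm{Var}\sim 2z$, and the ``progeny $\asymp$ (max level)$^2$'' heuristic converting exponent $|\delta-1|$ into $|\delta-1|/2$ are all correct, as is the first-step decomposition reducing \eqref{ret}, \eqref{ret0} to the two excursion parameters $\pm\delta$. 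The critical observation that $\mathrm{BESQ}(2)$ has logarithmic scale, making survival slowly varying and thus beating every power, correctly accounts for \eqref{del1} and \eqref{ret0}.

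Your sketch is honest about where the real work is: the diffusion approximation alone does not give the existence of the limits $c_1,c_2,c_3$ (it only gives the order of magnitude), and near absorption the approximation degenerates. In \cite{KZ14} this is handled by a fairly delicate comparison of the discrete chain with the diffusion in dyadic scales together with careful control of the overshoot at stopping times — a mechanism visible in this paper's Lemmas~\ref{main}, \ref{OS}, and their use in Lemmas~\ref{hitn}, \ref{delay}. You correctly flag that the borderline cases $|\delta|=1$ are the hardest precisely because the diffusion picture loses its teeth; that is exactly the gap the present paper's Theorem~\ref{main_res1} and Theorem~\ref{tbp} are written to close. So: correct approach, correct exponents, same route as the source; the sketch is not a proof only in the sense that it defers, as you say, the uniform estimates needed to upgrade $\asymp$ to honest limits.
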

This theorem immediately implies (iii) but provides very
little information about the tail of the return time in the critical case. Our first result fills in this gap.
\begin{theorem}\label{main_res1}
If $\delta=1$ then
there is a constant $c_4\in (0,\infty)$ such that 
\begin{align}
  \label{depth1}&\lim_{n\to\infty}(\ln n)\,P_1(T_n<T_0)=c_4;
\\\label{time1}
&\lim_{n\to\infty}(\ln n)\,P_1(T_0>n)=2c_4.
\end{align}
Moreover, if $|\delta|=1$ then  
\begin{equation}\label{ret1}
\lim_{n\to\infty}(\ln n)\, P_0(T^r_0>n)=c_5:=
\begin{cases}
  2c_4\IE[\omega_0(1)],& \text{if $\delta=1$};\\2c_4\IE[1-\omega_0(1)], & \text{if $\delta=-1$.} 
\end{cases}
\end{equation}
\end{theorem}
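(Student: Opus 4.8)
The plan is to prove \eqref{depth1} through the branching-process encoding of an excursion, to derive \eqref{time1} from it by the diffusive space--time relation, and to deduce \eqref{ret1} by conditioning on the first step out of the origin. For the encoding, run the walk from $1$ until $T_0$ (finite $P_1$-a.s.\ by recurrence) and, for $k\ge0$, let $Z_k$ be the number of downcrossings of the edge $\{k,k+1\}$ before $T_0$; then $Z_0=1$, and, as in the construction used in \cite{KZ14} (going back to Basdevant--Singh), $(Z_k)_{k\ge0}$ is, under $P_1$, a Markov branching process with migration — each downcrossing of $\{k,k+1\}$ begets a geometric number of downcrossings of $\{k+1,k+2\}$, the first $M$ visits to each site use biased coins, and $0$ is absorbing — while $\{T_n<T_0\}=\{Z_{n-1}\ge1\}$, so $P_1(T_n<T_0)$ is exactly the probability that $(Z_k)$ has not gone extinct by generation $n-1$. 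The cookies contribute a migration whose effective mean is proportional to $\delta$, so $(Z_k)$ is subcritical-type for $\delta<1$ and \emph{exactly critical} for $\delta=1$, with diffusion approximation a squared Bessel process of dimension $2\delta$ — equivalently, the critical member of the family of perturbed Bessel processes attached to the perturbed-Brownian-motion scaling limits of recurrent ERW. For $\delta<1$ the dimension is $<2$, giving the polynomial rate $n^{-(1-\delta)}$ of Theorem~\ref{kz14}; for $\delta=1$ the dimension is exactly $2$, so the process still dies out but only barely and, since the scale function of the two-dimensional squared Bessel process is $\ln(\cdot)$, the probability that $(Z_k)$ reaches population level $N$ before dropping back to an $O(1)$ level is $\asymp1/\ln N$, exactly as in the two-dimensional harmonic-measure computation.

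To turn this into the sharp statement \eqref{depth1} I would first (i) construct the appropriate logarithmic Lyapunov function $h\approx\ln(\text{population})$ for $(Z_k)$, show that $h(Z_k)$ is a martingale up to errors that are summable along trajectories, and combine this with optional stopping to get $P_1(T_n<T_0)=\Theta(1/\ln n)$; and then (ii) sharpen $\Theta$ to the asymptotic $c_4/\ln n+o(1/\ln n)$ by decomposing a trajectory into a ``bottom'' part, where $(Z_k)$ stays bounded — here the integer-valuedness of $(Z_k)$ and the detailed dynamics of the first $M$ (biased) visits, i.e.\ the full law of a cookie stack rather than just $\delta$, determine the constant $c_4$ — and a ``diffusive'' part, where the exact logarithmic law is transferred from the dimension-$2$ squared-Bessel approximation with a uniform error bound.

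Statement \eqref{time1} then follows from \eqref{depth1}: the duration of the excursion equals $\sum_k(\text{visits to }k)\asymp\sum_k Z_k$, the total progeny, which for a near-critical branching process concentrates on the square of the extinction time, so $\{T_0>n\}$ is, up to corrections, $\{Z_{\sqrt n}\ge1\}$ and $P_1(T_0>n)\sim P_1(T_{\sqrt n}<T_0)\sim c_4/\ln\sqrt n=2c_4/\ln n$; rigorously one needs the tightness, conditionally on $\{T_h<T_0\}$, of $\ln T_0-2\ln h$, which is enough since only the leading logarithmic order of a slowly varying tail is at stake. For \eqref{ret1}, conditioning on $X_1$ and using that on $\{T^r_0>n\}$ the walk's continuation after the first step samples only the cookies on one semi-axis, which are $\IP$-independent of $\omega_0(1)$, gives the exact identity
\[
P_0(T^r_0>n)=\IE[\omega_0(1)]\,P_1(T_0>n-1)+\IE[1-\omega_0(1)]\,\widehat P_1(T_0>n-1),
\]
where $\widehat P_1$ is the averaged law in the spatially reflected environment, of drift parameter $-\delta$. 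For $\delta=1$ the first term is $\sim2c_4\,\IE[\omega_0(1)]/\ln n$ by \eqref{time1}, while the second is $O(1/n)$ by \eqref{timed} of Theorem~\ref{kz14} applied to the reflected environment (drift $-1$) together with recurrence; hence $(\ln n)P_0(T^r_0>n)\to2c_4\,\IE[\omega_0(1)]=c_5$, and the case $\delta=-1$ follows by the reflection $x\mapsto-x$.

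I expect step (ii) to be the main obstacle: extracting the \emph{exact} constant $c_4$, rather than only the order $1/\ln n$, forces a quantitative matching of the bounded-population regime — which depends on the entire law of a cookie stack — with the diffusive regime, and it requires the diffusion approximation of $(Z_k)$ to be controlled uniformly over all the scales of population and generation index that occur in an excursion of depth $n$. A secondary, more technical difficulty is the tightness of $\ln T_0-2\ln h$ behind the passage from \eqref{depth1} to \eqref{time1}: because the relevant tails are only slowly varying, even a distortion by a power of $\ln$ in the depth--duration correspondence would survive into the limit.
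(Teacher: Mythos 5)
Your proposal follows essentially the same route as the paper: the paper also encodes the excursion by the downcrossing branching process, proves the key $1/\ln n$ asymptotics for the BP (Theorem~\ref{tbp}, via Lemma~\ref{hitn} which uses exactly the logarithmic scale-function bounds of Lemma~\ref{main} and a multi-scale ``bottom times diffusive product'' decomposition), transfers lifetime~$\to$ total-progeny by the quadratic relation (Lemmas~\ref{delay} and \ref{one}), and derives \eqref{ret1} by first-step conditioning using independence of $\omega_0(1)$ and \eqref{timed} at $\delta=-1$ for the left-excursion term. The only small imprecision is that the paper's argument organizes the sharp constant around the hitting probability $P_y(\tau_n<\sigma_0)$ (max population level) rather than the generation count per se, but these coincide at the $1/\ln n$ scale, and your identification of the ``bounded regime vs.\ diffusive regime'' matching as the real obstacle is exactly what the paper's Lemma~\ref{hitn} resolves.
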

The key statements of Theorem~\ref{main_res1} are (\ref{depth1}) and
(\ref{time1}). The last conclusion follows easily from
(\ref{time1}), (\ref{timed}) with $\delta=-1$, and the following
remark by conditioning on the first step (see \cite[(6.2)]{KZ14}).
\begin{remark}\label{sym}
  There is a useful symmetry in our model. If the environment
  $(\omega_x)_{x \in \IZ}$ is replaced with $(\tilde{\omega}_x)_{x \in
    \IZ} $ where $\tilde{\omega}_x (i)=1- \omega_x (i)$, for all
  $i\in\IN,\ x\in\IZ$, then $\tilde{X}$, the ERW corresponding to the
  new environment, satisfies
\begin{equation}
\tilde{X}\overset{\mathrm{d}}{=} -X,
\end{equation}
where $\overset{\mathrm{d}}{=}$ denotes the equality in distribution.
Thus, it is sufficient to consider only excursions to the right (for
all $\delta$). The corresponding results for excursions to the left
will follow by symmetry.  Thus from now on
we shall assume without loss of generality that $\delta\ge 0$.
\end{remark}

\subsection{Occupation times and scaling limits} Unless stated otherwise we shall assume
that all processes start at the origin at time $0$. Let $B=(B(t)),\
t\ge 0$, denote a standard Brownian motion and
$W_{\alpha,\beta}=(W_{\alpha,\beta}(t)),\ t\ge 0$, be an $(\alpha,
\beta)$-perturbed Brownian motion, i.e.\ the solution of the equation
\begin{equation}
  \label{abp}
  W_{\alpha,\beta}(t)=B(t)+\alpha\sup_{s\leq t}
W_{\alpha,\beta}(s)+\beta \inf_{s\leq t} W_{\alpha,\beta}(s).
\end{equation}
Reflected $\alpha$-perturbed Brownian motion,
$W_\alpha=(W_\alpha(t)),\ t\ge 0$, is the solution of
\begin{equation}
  \label{rap}
  W_\alpha(t)=B(t)+\alpha\sup_{s\leq t} W_\alpha(s)+\frac{1}{2} L^{W_\alpha}(t), 
\end{equation}
where $L^{W_\alpha}(t)$ is the local time of $W_\alpha$ at
zero. Equation (\ref{abp}) has a path-wise unique solution if
$(\alpha,\beta)\in(-\infty,1)\times(-\infty,1)$, and (\ref{rap}) has a
path-wise unique solution when $\alpha<1$ (\cite{Da96, PW, CD99}). In both
cases the solution is adapted to the filtration of $B$. If $\beta=0$
then the solution of (\ref{abp}) can be written explicitly:
\begin{equation}
  \label{bzero}
  W_{\alpha,0}(t)=B(t)+\frac{\alpha}{1-\alpha}\,\sup_{s\le t}B(s).
\end{equation}

Throughout the paper we use $\ \Rightarrow\ $ to denote the weak
convergence of random variables and $\ \overset{J_1}{\Rightarrow}\ $ for
the weak convergence of stochastic processes with respect to the
standard Skorokhod topology $J_1$ on $D([0,\infty))$, the space of
c\`adl\`ag functions on $[0,\infty)$.\footnote{Since all limiting
  processes below have continuous paths, we can also claim the
  convergence with respect to the uniform topology on $D([0,T])$ for
  each $T>0$ (see \cite[Section 15]{B99}).} 

The following two theorems describe scaling limits of recurrent ERWs.
\begin{theorem}[\cite{DK12}, Theorem 1.1]
\label{ThERWRecLim}
Let $\delta\in[0,1)$. Then under $P_0$ \[\dfrac{X_{[n\cdot]}}{\sqrt
  n}\overset{J_1}{\Rightarrow} W_{\delta,-\delta}(\cdot)\ \text{as }n\to\infty. \]
\end{theorem}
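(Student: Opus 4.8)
\medskip
\noindent\emph{Proof proposal.}
My plan is to reduce the statement to a Ray--Knight-type description of the edge local times of the ERW together with a Feller-type diffusion approximation for the branching processes that govern them. Write $\mathcal L_n^y(m)$ for the number of crossings of the edge $\{y,y+1\}$ during the first $m$ steps; since each step crosses exactly one edge we have $m=\sum_y\mathcal L_n^y(m)$, and the path $(X_m)$ itself is recovered from the edge-crossing sequence, which is read off the increments of the field $(\mathcal L_n^y(\cdot))_{y\in\IZ}$. The first move is localization and tightness: using the excursion estimates of Theorem~\ref{kz14} in the regime $\delta<1$ --- \eqref{depth} gives $P_1(T_k<T_0)\asymp k^{-(1-\delta)}$, and \eqref{ret} bounds the number of excursions completed before time $nt$ --- one checks that $P_0(\max_{m\le nt}|X_m|\ge L\sqrt n)\to 0$ as $L\to\infty$ uniformly in $n$, and more generally that $X_{[n\cdot]}/\sqrt n$ is $J_1$-tight on each interval $[0,t]$; moreover it becomes harmless to follow the walk only up to the hitting times $T_{\pm\lceil L\sqrt n\rceil}$ for large fixed $L$. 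It therefore suffices to identify every subsequential limit, for which I will prove convergence of the rescaled local-time field.

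The core step is the branching representation and its scaling limit. Fix $x>0$ and run the walk to $T_{\lfloor x\sqrt n\rfloor}$. By the coin-toss construction and assumption $(\mathrm{BD_M})$, if one reads the sites from $\lfloor x\sqrt n\rfloor$ leftwards then the successive downcrossing counts form a branching process in which each particle has a $\mathrm{Geom}(1/2)$ number of offspring, except that the first at most $M$ visits to a site use the biased cookies there and so perturb the offspring count by a bounded amount whose conditional mean is governed by $\delta$; to the left of the origin this is a critical branching process that dies out, to the right of the origin it carries a bounded migration term, and the two pieces are joined at the origin through the number of excursions of $X$ to the left of $0$ before $T_{\lfloor x\sqrt n\rfloor}$ (this is the Kosygina--Zerner branching picture for ERW; see \cite{KZ13} and the references therein). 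The offspring law having mean $1$ and variance $2$, the right Feller scaling divides both the spatial variable and the population by $\sqrt n$, and the bounded per-site cookie corrections, summed over the $\asymp\sqrt n$ sites and then rescaled, survive in the limit as a drift of rate $\delta$. Invoking the functional limit theorem for such near-critical branching processes, I would obtain that $n^{-1/2}\mathcal L_n^{\lfloor u\sqrt n\rfloor}(T_{\lfloor x\sqrt n\rfloor})$ converges in $J_1$, jointly in $u$ and $x$, to a limiting field $\mathcal Y$ which in the spatial variable $u$ is (schematically) a Feller diffusion with drift $\delta$, $d\mathcal Y=\delta\,du+\sqrt{2\mathcal Y}\,dB$, to the right of the origin, a Feller diffusion absorbed at $0$ to the left, and whose value at the origin is an independent immigration term assembled from the left-excursion counts.

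It remains to identify $\mathcal Y$ and to return to the path. Comparing the SDE system just obtained with the known Ray--Knight-type description of the local-time field of an $\alpha$-perturbed Brownian motion (here with $\alpha=\delta$ and $\beta=-\delta$), I would recognize $\mathcal Y$ as precisely the local-time field of $W_{\delta,-\delta}$ read at the hitting time of level $x$. Since $W_{\delta,-\delta}$ is recovered from its own (jointly continuous) local-time field by the same occupation-time and excursion relations used above for $X$, and this reconstruction is continuous at every configuration arising in the limit --- $W_{\delta,-\delta}$ a.s.\ spends zero time at any fixed level, which excludes the degenerate configurations --- the convergence of local-time fields upgrades, together with the localization of the first step, to $X_{[n\cdot]}/\sqrt n\overset{J_1}{\Rightarrow}W_{\delta,-\delta}(\cdot)$ on $[0,t]$; letting $t\to\infty$ completes the proof.

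I expect the main obstacle to be the joint functional convergence of the branching processes carrying the cookie-induced migration: one must control the near-critical population fluctuations uniformly in the spatial parameter $x$, handle the absorption at $0$ for the part to the left of the origin, and verify that the bounded cookie corrections really do pass in the limit to the clean drift $\delta$ rather than to something sensitive to finer features of the cookie law --- all of this without the Markov property, so that the usual tools for invariance principles must be routed through the branching representation rather than applied to $X$ directly. A secondary difficulty is the continuity of the local-time-to-path reconstruction at the relevant limit points, which is delicate precisely where the limiting local-time profile vanishes.
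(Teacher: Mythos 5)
This is a cited result from \cite{DK12}; the present paper does not prove it, but the proof strategy of \cite{DK12} is sketched here in the proof of Theorem~\ref{main_res2}(b): one writes $X_n=B^0_n+C^0_n$, where $B^0$ collects the increments made on the unbiased (post-$M$-th-visit) coins and $C^0$ collects the cookie increments, shows that $B^0_{[n\cdot]}/\sqrt n$ converges to a Brownian motion jointly with $C^0_{[n\cdot]}/\sqrt n\Rightarrow \delta\big(\sup_{s\le\cdot}W(s)-\inf_{s\le\cdot}W(s)\big)$, and identifies the limit with the defining equation~\eqref{abp}. Your proposal instead goes through an edge-local-time field, a Ray--Knight-type description of that field via the branching processes of Section~\ref{cbp}, and a reconstruction of the path from the limiting local-time profile. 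These are genuinely different routes, and the comparison is instructive: the decomposition $X=B^0+C^0$ works directly on the path, gets tightness almost for free (a martingale with bounded increments plus a process controlled by the range), and lands on~\eqref{abp} by inspection; the Ray--Knight route would, if completed, also yield occupation-time facts like Theorem~\ref{PrPBMPOsNeg}(a) as a byproduct, but it is much heavier machinery for the statement at hand.

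The serious gap in your plan is the final reconstruction step, which is asserted rather than argued. Convergence of the rescaled crossing field $\big(n^{-1/2}\mathcal L_n^{\lfloor u\sqrt n\rfloor}(T_{\lfloor x\sqrt n\rfloor})\big)_{u}$ for each fixed $x$ (even jointly in $x$) determines the hitting-time functional $T_{\lfloor x\sqrt n\rfloor}=\sum_y\mathcal L_n^y(T_{\lfloor x\sqrt n\rfloor})$, but it does not determine, nor does it obviously control, the time-ordering of the excursions, i.e.\ the parametrization $m\mapsto X_m$ between hitting times. For Markov chains this upgrade is classical (excursion theory gives the time change), but ERW is not Markov, and the branching-process coupling encodes only crossing \emph{counts}, not their order in time. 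You write ``$W_{\delta,-\delta}$ is recovered from its own local-time field by the same occupation-time and excursion relations used above for $X$,'' but that is precisely the step that needs a proof, and it is where all the non-Markovian difficulty concentrates. A secondary issue: your tightness argument leans only on the excursion-depth estimate~\eqref{depth}, which controls $\sup_m|X_m|$ but not a modulus of continuity; $J_1$-tightness of $X_{[n\cdot]}/\sqrt n$ needs oscillation control on short time windows, and for that the martingale-plus-monotone decomposition of \cite{DK12} is doing real work that the local-time picture does not readily replace.
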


\begin{theorem}[\cite{DK12}, Theorem 1.2]
\label{ThERWCrit}
Let $\delta=1$ and $B^*(t):=\max_{s\le t}B(s)$. Then there exists a constant $b\in(0,\infty)$ such that under $P_0$
\[\frac{X_{[n\cdot]}}{b \sqrt{n}\log n}\overset{J_1}{\Rightarrow} B^*(\cdot)\
\text{as }n\to\infty.\]
\end{theorem}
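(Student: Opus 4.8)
The plan is to pass to the inverse process. Write $X^{*}_n:=\max_{0\le k\le n}X_k$ for the running maximum of the walk, and recall that the running maximum $B^{*}$ of Brownian motion is precisely the right-continuous inverse of its first-passage process $\tau_a:=\inf\{t:B(t)=a\}$, which is a standard one-sided stable-$\tfrac12$ subordinator. Since $\{X^{*}_n\ge m\}=\{T_m\le n\}$, and recurrence at $\delta=1$ gives $T_m<\infty$ $P_0$-a.s.\ for every $m$, the scaling limit of $X^{*}$ is governed by that of the hitting times $(T_m)_{m\ge0}$. I would reduce the theorem to two statements: \textbf{(A)} there is $\kappa\in(0,\infty)$ such that, under $P_0$,
\[
  \bigl(n^{-1}T_{[u\sqrt n\,\log n]}\bigr)_{u\ge0}\ \overset{J_1}{\Rightarrow}\ (\kappa\,\sigma_u)_{u\ge0},
\]
where $\sigma$ is a standard stable-$\tfrac12$ subordinator; and \textbf{(B)} the walk hugs its maximum on this scale: for every $T>0$,
\[
  \sup_{0\le t\le T}\frac{X^{*}_{[nt]}-X_{[nt]}}{\sqrt n\,\log n}\ \Rightarrow\ 0 .
\]
Granting (A): $\kappa\sigma$ is a.s.\ strictly increasing, so the inversion map is continuous at it; together with $\{X^{*}_n\ge m\}=\{T_m\le n\}$ this gives $X^{*}_{[n\cdot]}/(\sqrt n\log n)\overset{J_1}{\Rightarrow}b\,B^{*}(\cdot)$ for a constant $b\in(0,\infty)$ fixed by $\kappa$ (the inverse of $\kappa\sigma$ is continuous and equal in law to $bB^{*}$), and then (B) transfers the limit from $X^{*}$ to $X$. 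The recurrence of $X$ does not clash with the monotone limit: once $X$ has climbed to height of order $\sqrt n\log n$, a return to the origin is a backtrack of that size and costs time of order $n(\log n)^2\gg n$, so every return to $0$ occurring in $[0,nT]$ happens at a time $o(n)$.

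For (A) I would use the Kesten--Kozlov--Spitzer-type backward branching representation of the hitting times. Run the walk up to $T_m$ and let $U_k$ be the number of steps from $k$ to $k-1$ before $T_m$; then $T_m=m+2\sum_{k\in\IZ}U_k$, and, read in the order $k=m,m-1,\dots$, the sequence $(U_m,U_{m-1},\dots)$ is a time-homogeneous Markov chain with $U_m=0$: given $U_{k+1}=u$, the value $U_k$ is the number of left-decisions preceding the $(u+1)$-st right-decision (for $k\ge1$; the $u$-th right-decision for $k\le0$) in a Bernoulli sequence whose first $M$ entries have success probabilities $\omega_k(1),\dots,\omega_k(M)$ and all later ones $1/2$, with independent cookies at different sites $k$. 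A direct computation gives, for $u$ large and $k\ge1$, $\IE[U_k\mid U_{k+1}=u]=u+1-\delta+o(1)$ with conditional variance $\sim 2u$; hence at $\delta=1$ this branching chain has asymptotically zero drift and is \emph{critical} (null-recurrent), returning to $0$ only on the time scale that will generate the logarithm. (For $\delta>1$ the drift is negative, which forces $\sum_k U_k<\infty$ and gives transience; for $\delta<1$ the positive drift makes $U_{[m\cdot]}/m$ converge to a nondegenerate diffusion, so $T_m\asymp m^2$, consistent with Theorem~\ref{ThERWRecLim}.) For $k\le0$ the migration term is $-\delta\le0$, so the chain is pushed to $0$; the tail of $\sum_{k\le0}U_k$ is of the same type as below and is absorbed into $\kappa$.

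The core is the tail of $\sum_{k=1}^m U_k$. Decompose the trajectory of $(U_k)$ into i.i.d.\ excursions away from $0$, with generic length $\Lambda$ and area $A=\sum_{\mathrm{exc}}U$. Criticality forces $\IP(\Lambda>\ell)\sim c_\Lambda/\ell$ and, since an excursion of length $\ell$ has size $O(\ell)$ throughout, $\IP(A>a)\sim c_A/\sqrt a$ (the constants do not affect the form of the limit). Hence the number of excursions completed while $k$ ranges over an interval of length $L$ is $\asymp L/\log L$ — the truncated mean $\IE[\Lambda\wedge L]\asymp\log L$ — and the corresponding sum of areas, normalised by $L^2/(\log L)^2$, converges to a stable-$\tfrac12$ variable. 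Inserting this into $T_m=m+2\sum_k U_k$, together with a regeneration argument at the returns of $(U_k)$ to $0$ to get joint convergence over disjoint $u$-windows and $J_1$-tightness, yields (A); the index-$1$ excursion-length tail $\sim c_\Lambda/\ell$ — which is exactly what the vanishing migration drift at $\delta=1$ produces, in place of the exponential tail of the case $\delta<1$ — is the source of the $\log n$. For (B): below the frontier the cookies are essentially exhausted, so the current backtrack $X^{*}_n-X_n$ behaves like a simple-random-walk excursion, of order $\sqrt n=o(\sqrt n\log n)$ at a fixed time, while an atypically deep backtrack (a descent near $0$) from the current height $h$ has probability of order (occupation of $h$ by the chain)$/h$, which remains $o(1)$ when summed over the $\asymp\sqrt n\log n$ heights visited before time $nT$.

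The main obstacle is the excursion analysis of the previous paragraph. The chain $(U_k)$ is only \emph{asymptotically} critical: its migration term is bounded but state-dependent, and near $0$ only a few cookies are consumed, so the small-state dynamics differ qualitatively from the large-state regime. Pinning down the exact exponents ($1$ for $\Lambda$, $\tfrac12$ for $A$) — as opposed to crude two-sided bounds — requires a careful coupling of $(U_k)$ with a genuine critical Galton--Watson process with geometric offspring and sharp control of the excursion measure near the boundary; this is precisely the estimate that the more robust $|\delta|<1$ arguments fail to provide in the critical case. A secondary, largely routine difficulty is upgrading the one-dimensional stable limit to the functional statement (A): identifying the limit as a subordinator, establishing near-independence of successive blocks via the regenerations of $(U_k)$, and verifying $J_1$-tightness of the hitting-time process.
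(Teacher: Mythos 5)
First, a point of fact: this paper does not prove Theorem~\ref{ThERWCrit}; it is imported verbatim from \cite{DK12}, so there is no in-house proof to compare against. Judged on its own terms, your architecture is the standard and correct one for this statement, and it is the route the literature takes: reduce to the first-passage times via $\{X^*_n\ge m\}=\{T_m\le n\}$, show $T_{[u\sqrt n\log n]}/n$ converges to a multiple of a stable-$\tfrac12$ subordinator using the Kesten--Kozlov--Spitzer backward chain $(U_k)$, and separately show that $X$ hugs $X^*$ on the scale $\sqrt n\log n$. Your exponents are right: at $\delta=1$ the backward chain has zero net migration, so its excursion length and area tails are the zero-migration analogues of \eqref{depth} and \eqref{timed} at $\delta=0$ (namely $c/\ell$ and $c/\sqrt a$), the truncated mean $\IE[\Lambda\wedge L]\asymp\log L$ produces the $\log n$, and the whole mechanism is mirrored in this paper's forward-BP toolbox (Lemma~\ref{areabp}'s ``area $\approx$ max$^2$'', the $1/\log$ tails of Theorem~\ref{main_res1}, the Dynkin--Lamperti uniform variable of Theorem~\ref{main_res2}(a)). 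The one genuine difference from \cite{DK12} worth noting is in step (B): there the frontier-hugging estimate comes from the explicit decomposition $X_n=B_n+C_n$ with $C_n\approx\sup_{k\le n}X_k+\inf_{k\le n}X_k$ (the same decomposition this paper recalls when proving Theorem~\ref{main_res2}(b)), which gives $\sup_{k\le n}(X^*_k-X_k)=O_P(\sqrt n)$ in one clean stroke, rather than your excursion-by-excursion accounting.

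Two soft spots. First, your justification of (B) is off as written: it is not true (nor needed) that ``every return to $0$ occurring in $[0,nT]$ happens at a time $o(n)$'' --- the walk returns to $0$ at times of order $n$. The correct mechanism is the one hiding in your parenthetical: any excursion \emph{completed} by time $nT$ has duration at most $nT$ and hence height $O(\sqrt n)$ (height $\approx\sqrt{\text{duration}}$, Lemma~\ref{areabp}), so at any return time $X^*-X=X^*=O(\sqrt n)$; and within the still-running excursion the gap to the frontier is controlled by the martingale part, again $O_P(\sqrt n)$. Second, the crux of (A) --- the sharp asymptotics $\IP(\Lambda>\ell)\sim c_\Lambda/\ell$ and $\IP(A>a)\sim c_A/\sqrt a$ for the excursions of the only asymptotically critical chain $(U_k)$, plus the relative stability of the excursion counts and the negligibility of the excursion straddling $k=1$ --- is asserted rather than proved. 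You correctly identify this as the main obstacle; it is of the same order of difficulty as \cite{KZ14}'s Theorem~1.1 and cannot be waved through by comparison with a clean critical Galton--Watson process, because the migration is state-dependent near $0$. So: right strategy, right constants and exponents, with the hardest lemma named but not supplied.
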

At the first glance it appears counter-intuitive that for $\delta=1$
the limiting process is transient while the original process is
recurrent. However, the running maximum of Brownian motion is a
natural limit of $W_{\alpha, \beta}((1-\alpha)^2\,\cdot)$ as
$\alpha\uparrow 1$ (see the discussion right after Theorem
\ref{PrPBMPOsNeg}).
 
Theorem~\ref{ThERWRecLim} suggests that the rescaled occupation times
of positive and negative semi-axes of non-critical recurrent ERW
should converge to those of the reflected perturbed Brownian motion. The
latter was studied in detail, and the next theorem quotes results from
the literature. Let \[A^+(t):=\int_0^t \I_{\{W_{\alpha,\beta}(u)\ge
  0\}}\, du,\ \ A^-(t):=\int_0^t \I_{\{W_{\alpha,\beta}(u)<0\}}\, du,
\ t\ge 0,\] and $T_{\pm}(t):=\inf\{s:\, A^\pm(s)>t\}$, $t\ge 0$, be
the right continuous inverses of $A^\pm(\cdot)$. Denote by $Z(a,b)$ a
$\beta$-distributed random variable with parameters $a$ and $b$.
\begin{theorem}
\label{PrPBMPOsNeg} For all $\alpha,\beta<1$ the following holds:
\begin{itemize}
\item [(a)] \cite[equation (8)]{CPY}\[\dfrac{A^+(t)}{t}\overset{\mathrm{d}}{=}Z\left(\dfrac{1-\beta}{2},
    \dfrac{1-\alpha}{2}\right)\ \ \text{and}  
  \ \ \dfrac{A^-(t)}{t}\overset{\mathrm{d}}{=}
  Z\left(\dfrac{1-\alpha}{2},\dfrac{1-\beta}{2}\right).\]
\item [(b)] \cite[Theorem 1]{CD00}\[ \quad
  W_{\alpha,\beta}(T^+(\cdot))\overset{\mathrm{d}}{=}W_\alpha(\cdot)\ \ 
  \text{and}
  \ \ -W_{\alpha,\beta}(T^-(\cdot))\overset{\mathrm{d}}{=}W_{\beta}(\cdot).\]
\end{itemize}
\end{theorem}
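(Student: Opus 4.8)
The plan is to reduce both parts to structural properties of $W_{\alpha,\beta}$ and its reflected companions, along the lines of \cite{CPY,CD00}. The common first step is the Brownian scale invariance of $W_{\alpha,\beta}$: equation (\ref{abp}) is unchanged under $B(\cdot)\mapsto c^{-1}B(c^2\,\cdot)$, so by path-wise uniqueness $c^{-1}W_{\alpha,\beta}(c^2\,\cdot)\overset{\mathrm{d}}{=}W_{\alpha,\beta}(\cdot)$ as processes, whence $A^\pm(c^2 t)/(c^2 t)\overset{\mathrm{d}}{=}A^\pm(t)/t$. Thus the law of $A^+(t)/t$ in (a) is independent of $t$; and since $A^+(t)/t=1-A^-(t)/t$ and $1-Z(a,b)\overset{\mathrm{d}}{=}Z(b,a)$, the two identities in (a) are equivalent, so it suffices to identify the law of $A^+(1)$.

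For part (b) I would use a path-wise time change. Set $Y:=W_{\alpha,\beta}\circ T^+$, so $Y\ge 0$; since $W_{\alpha,\beta}\le 0$ off its positive excursions while $\sup_s W_{\alpha,\beta}(s)\ge 0$, one has $\sup_{u\le t}Y(u)=\sup_{s\le T^+(t)}W_{\alpha,\beta}(s)$. Writing $W_{\alpha,\beta}=B+\alpha M+\beta m$ with $M=\sup$ and $m=\inf$, the crucial observation is that over any complete negative excursion $[a,b]$ (where $W_{\alpha,\beta}(a)=W_{\alpha,\beta}(b)=0$ and $M$ is constant) the Brownian increment satisfies $B(b)-B(a)=-\beta\bigl(m(b)-m(a)\bigr)$. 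Summing this over all negative excursions excised by $T^+$, and using that $m$ changes only on excised intervals, yields $B(T^+(t))=N(t)-\beta\,m(T^+(t))$, where $N$ is the Brownian path restricted to the positive excursions and reglued. Substituting into $Y=B\circ T^+ + \alpha\,M\circ T^+ + \beta\,m\circ T^+$, the two $\beta m$-terms cancel and one is left with $Y(t)=N(t)+\alpha\sup_{u\le t}Y(u)$. Now $N$ is continuous with quadratic variation $[N]_t=t$ but fails to be a martingale; I expect that the constraint $Y\ge 0$ forces $N$ to have the form $N=\tilde B+\tfrac12 L^{Y}$, with $\tilde B$ a Brownian motion of the excursion filtration and the nondecreasing term carried by $\{Y=0\}$ — it records the upward push created at the origin each time a downward sojourn is excised — the coefficient $\tfrac12$ being fixed by a Skorokhod-reflection computation. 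Then $Y$ solves (\ref{rap}), and path-wise uniqueness for (\ref{rap}) (\cite{Da96,PW,CD99}) gives $Y\overset{\mathrm{d}}{=}W_\alpha$. The second identity in (b) follows by applying the same argument to $-W_{\alpha,\beta}$, which solves (\ref{abp}) with $\alpha$ and $\beta$ interchanged and whose positive-occupation clock is $A^-$ of $W_{\alpha,\beta}$.

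For part (a) I would work with the excursion decomposition of $W_{\alpha,\beta}$ at $0$. One first checks that its zero set is a self-similar, hence stable, regenerative set, and that conditionally on its local time the positive- and negative-time clocks split: with $\tau$ the inverse local time at $0$, the processes $\ell\mapsto A^+(\tau_\ell)$ and $\ell\mapsto A^-(\tau_\ell)$ are independent increasing self-similar processes. Their self-similarity indices are read off from the small-excursion behaviour of the excursion measures of the reflected perturbed Brownian motions $W_\alpha$ and $W_\beta$ identified in (b), giving $(1-\alpha)/2$ and $(1-\beta)/2$ respectively. Inverting this two-clock picture in the manner of Lamperti's occupation-time theorem — i.e.\ computing the law of the time change relating $A^+$ to $A^-$ — should produce $A^+(1)\overset{\mathrm{d}}{=}Z\bigl(\tfrac{1-\beta}{2},\tfrac{1-\alpha}{2}\bigr)$, which with the first paragraph proves (a).

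The hard part, and the reason this is a cited rather than self-contained result, is that $W_{\alpha,\beta}$ is not a Markov process: the excursion theory underlying both parts — the conditional independence of the positive and negative excursion systems given the zero set, and the precise small-excursion asymptotics (equivalently the indices) of $W_\alpha$ and $W_\beta$ — must be built by hand rather than quoted. This is exactly the technical content of \cite{CPY,CD00}, whose conclusions we import.
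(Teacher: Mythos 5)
The paper does not prove Theorem~\ref{PrPBMPOsNeg} at all: it is stated explicitly as a quotation of \cite[eq.\,(8)]{CPY} and \cite[Theorem 1]{CD00} (``the next theorem quotes results from the literature''). So there is no internal proof to compare yours against; what you have written is a reconstruction of the arguments in the cited papers, not a derivation the authors carry out.

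As a reconstruction it is partly sound but, by your own admission, incomplete at the two decisive points. The preliminaries are fine: the Brownian scaling of $W_{\alpha,\beta}$ makes the law of $A^+(t)/t$ independent of $t$, and the relation $1-Z(a,b)\overset{\mathrm{d}}{=}Z(b,a)$ reduces (a) to one identity. In (b), the algebraic observation that over a complete negative excursion $[a,b]$ one has $M(a)=M(b)$ and hence $B(b)-B(a)=-\beta\bigl(m(b)-m(a)\bigr)$ is correct, and so is the resulting cancellation giving $Y(t)=N(t)+\alpha\sup_{u\le t}Y(u)$. But the step from there to ``$N=\tilde B+\tfrac12 L^{Y}$ with $\tilde B$ a Brownian motion'' is asserted, not proved, and this is exactly where the real content of \cite{CD00} lies: one must justify that the time-changed process $N$ is a Brownian motion plus a drift supported on $\{Y=0\}$, identify that drift with the local time of $Y$, and pin down the normalization so that the coefficient matches the $\tfrac12$ in equation~(\ref{rap}). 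None of this follows from quadratic-variation bookkeeping alone; in particular the time change $T^+$ is not a stopping time and $W_{\alpha,\beta}$ is not Markov, so the usual L\'evy/DDS machinery does not apply directly. Likewise in (a), the regenerative structure of the zero set of a non-Markov process, the conditional independence of the positive and negative excursion systems given that zero set, and the small-excursion exponents are all things you ``expect'' rather than establish, and they are precisely the technical core of \cite{CPY}. In short: your outline captures the right shape of the argument, but the hard steps are flagged as gaps rather than closed, whereas the paper simply does not attempt them and cites the references instead.
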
 
Theorem~\ref{PrPBMPOsNeg} implies that $W_{\alpha, \beta}((1-\alpha)^2
\,\cdot)\Rightarrow B^*(\cdot) \text{ as }\alpha\uparrow 1$.  Indeed,
the Brownian scaling of $W_{\alpha,\beta}$
(\cite[Proposition~2.3]{CPY}) allows to rewrite the above convergence as
 \begin{equation}
 \label{LimABPert2}
 (1-\alpha) W_{\alpha, \beta}(\cdot)\Rightarrow  
 B^*(\cdot) \text{ as }\alpha\uparrow 1. 
 \end{equation}
 By Theorem~\ref{PrPBMPOsNeg}(a) $\alpha\uparrow 1$ the process
 $W_{\alpha, \beta}$ stays most of the time in $[0,\infty)$ 
 (recall that
 $\IE\left[A^+(t)/t\right]=(1-\beta)/(2-\alpha-\beta)$). By
 Theorem~\ref{PrPBMPOsNeg}(b) we conclude that the limit in
 \eqref{LimABPert2} should be independent of $\beta$. On the other
 hand, if $\beta=0$ then (\ref{bzero}) tells us that $(1-\alpha)
 W_{\alpha, 0}(\cdot)$ has the same law as $(1-\alpha) B(\cdot)+\alpha
 \sup_{s\leq \cdot} B(s)$. This implies \eqref{LimABPert2}.

The next corollary follows from Theorems~\ref{ThERWRecLim} and
\ref{PrPBMPOsNeg} by the continuous mapping theorem (see
Section~\ref{four} for details).
\begin{corollary}
\label{CrPBMPOsNeg}
Suppose that $\delta\in[0,1)$. Let 
\[A^+_n:=\sum_{i=0}^n \mathbbm{1}_{\{X_i\ge 0\}}\ \ \text{and}\ \
A^-_n:=\sum_{i=0}^n \mathbbm{1}_{\{X_i<0\}},\ n\ge 0, \] and
$T^\pm_m:=\inf\{n\ge 0:\, A^\pm_n>m\}$, $m\ge 0$. Then
\begin{alignat*}{4}
  &(a)\quad\frac{A^+_n}{n}\Rightarrow
  Z\left(\frac{1+\delta}{2},\frac{1-\delta}{2}\right)\quad
  &&\text{and}\quad &&\frac{A^-_n}{n}\Rightarrow
  Z\left(\frac{1-\delta}{2},\frac{1+\delta}{2}\right)\ &&\text{as $n\to\infty$};\\
  &(b)\quad\quad\quad
  \frac{X_{T^+_{\floor{m\cdot}}}}{\sqrt{m}}\overset{J_1}{\Rightarrow}
  W_\delta(\cdot)\ \ &&\text{and}\ \
  &&\quad -\frac{X_{T^-_{\floor{m\cdot}}}}{\sqrt{m}}\overset{J_1}{\Rightarrow}
  W_{-\delta}(\cdot)\ &&\text{as $m\to\infty$}.
\end{alignat*}
\end{corollary}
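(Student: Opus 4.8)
The plan is to deduce both parts from Theorems~\ref{ThERWRecLim} and~\ref{PrPBMPOsNeg} by the continuous mapping theorem, the only real work being to check that the limiting process lands in the continuity sets of the functionals involved. Set $Y_n(u):=X_{[nu]}/\sqrt n$, so that $Y_n\overset{J_1}{\Rightarrow}W:=W_{\delta,-\delta}$ by Theorem~\ref{ThERWRecLim}; since $W$ has continuous paths this convergence is uniform on compacts. Because $\I_{\{X_i\ge 0\}}=\I_{\{Y_n(i/n)\ge 0\}}$, we have, uniformly on compact time intervals,
\[\frac1n A^+_{[n\cdot]}=\int_0^{\cdot}\I_{\{Y_n(u)\ge 0\}}\,du+O(1/n)\quad\text{and}\quad \frac1n A^-_{[n\cdot]}=\int_0^{\cdot}\I_{\{Y_n(u)<0\}}\,du+O(1/n).\]
Now $\Psi:g\mapsto\bigl(t\mapsto\int_0^t\I_{\{g(u)\ge 0\}}\,du\bigr)$ maps $(D([0,\infty)),J_1)$ into $C([0,\infty))$ and is continuous at every continuous $g$ with $\meas\{u:g(u)=0\}=0$ (by dominated convergence, the limit being nondecreasing and Lipschitz, so the convergence is locally uniform); and the occupation-time formula for the continuous semimartingale $W$ gives $\meas\{u\le t:W(u)=0\}=0$ $P_0$-a.s. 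Hence $\frac1n A^+_{[n\cdot]}\overset{J_1}{\Rightarrow}A^+(\cdot)$ and $\frac1n A^-_{[n\cdot]}\overset{J_1}{\Rightarrow}A^-(\cdot)$, jointly with $Y_n\overset{J_1}{\Rightarrow}W$; part (a) is then the case $t=1$ combined with Theorem~\ref{PrPBMPOsNeg}(a) with $(\alpha,\beta)=(\delta,-\delta)$, which reads off $A^+(1)\overset{\mathrm{d}}{=}Z(\tfrac{1+\delta}{2},\tfrac{1-\delta}{2})$ and $A^-(1)\overset{\mathrm{d}}{=}Z(\tfrac{1-\delta}{2},\tfrac{1+\delta}{2})$.

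For part (b) I would pass to the right-continuous inverses. One has $\tfrac1n T^+_{\lfloor nt\rfloor}=\bigl(\tfrac1n A^+_{[n\cdot]}\bigr)^{-1}(\lfloor nt\rfloor/n)$, where $A^+(\cdot)$ is continuous, nondecreasing and (a.s.) unbounded; a careful passage to the limit in the inversion map then gives $\tfrac1n T^+_{\lfloor n\cdot\rfloor}\overset{J_1}{\Rightarrow}T^+(\cdot)$, jointly with $Y_n\overset{J_1}{\Rightarrow}W$, the $O(1/n)$ in the level $\lfloor nt\rfloor/n$ being harmless. Then, since $W$ has continuous paths, the composition map $(g,\phi)\mapsto g\circ\phi$ is continuous at $(W,T^+)$ with values in $(D([0,\infty)),J_1)$, and since $X_{T^+_{\lfloor n\cdot\rfloor}}/\sqrt n=Y_n\bigl(\tfrac1n T^+_{\lfloor n\cdot\rfloor}\bigr)+o(1)$ uniformly on compacts we obtain $X_{T^+_{\lfloor n\cdot\rfloor}}/\sqrt n\overset{J_1}{\Rightarrow}W(T^+(\cdot))$ and, in the same way, $-X_{T^-_{\lfloor n\cdot\rfloor}}/\sqrt n\overset{J_1}{\Rightarrow}-W(T^-(\cdot))$. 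By Theorem~\ref{PrPBMPOsNeg}(b) with $(\alpha,\beta)=(\delta,-\delta)$ these limits are distributed as $W_\delta(\cdot)$ and $W_{-\delta}(\cdot)$, which is (b).

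The main obstacle is the passage to the limit for the inverses: $T^+$ genuinely has jumps — it jumps across every maximal interval on which $W\le 0$, and such intervals accumulate near the zeros of $W$ — so one is not dealing with uniform convergence, and the inversion map is not continuous on all of $D$ at functions with flat stretches. What makes it work here is that the prelimit $\tfrac1n A^+_{[n\cdot]}$ approximates the flat-spot structure of $A^+$ without macroscopically splitting any flat stretch (a macroscopic positive excursion of $X$ inside such a stretch would force $W>0$ there), and that the outer process $W$ is continuous and returns to $0$ at the endpoints of every flat stretch of $A^+$, so $W\circ T^+$ is continuous and the jumps of $T^+$ do not obstruct the composition; this reduces the argument, off a finite collection of macroscopic excursions on each compact interval, to locally uniform convergence, to which the standard time-change lemmas (e.g.\ \cite{B99}) apply. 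The remaining points — that the $O(1/n)$ and $o(1)$ errors above are uniform on compacts, and the elementary fact that $X_{T^+_m}\ge 0$ for all $m$ (so the time-changed walk stays on the half-line, like its limit $W_\delta\ge 0$) — are routine.
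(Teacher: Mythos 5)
Your treatment of part (a) is sound and essentially matches the paper's own route (Lemma~\ref{time_pos} via Proposition~\ref{PrPhi} and the continuous mapping theorem); the only difference is that the paper establishes $\meas\{t:W_{\delta,-\delta}(t)=0\}=0$ a.s.\ by Fubini plus absolute continuity of the one-dimensional marginals, whereas you invoke the occupation-time formula for the continuous semimartingale $W_{\delta,-\delta}$. Both work.

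For part (b), however, the factorization through the composition map $(g,\phi)\mapsto g\circ\phi$ contains a genuine gap. The claim that this map ``is continuous at $(W,T^+)$ since $W$ has continuous paths'' is false as stated: standard composition continuity requires the inner function $\phi$ to be continuous, or else $g$ to be constant on each jump interval $[\phi(t-),\phi(t)]$. Here $T^+$ jumps across negative excursions of $W$, on which $W$ is emphatically not constant --- it only takes the same value ($0$) at the two endpoints and dips strictly below $0$ in between. A simple counterexample: keep $g_n\equiv W$ and let $\phi_n$ replace a jump of $T^+$ by a linear ramp squeezed into a vanishing time window; then $\phi_n\to T^+$ in $J_1$, but $W\circ\phi_n$ shows a persistent downward spike of macroscopic depth that $W\circ T^+$ lacks, so $W\circ\phi_n\not\to W\circ T^+$ in $J_1$. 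What makes your specific sequence converge anyway --- and you sense this correctly in your ``main obstacle'' paragraph --- is the coupling: $T^+_m$ is by definition a time when $X\ge 0$, so $Y_n\bigl(\tfrac1n T^+_{\lfloor n\cdot\rfloor}\bigr)\ge 0$ always, and the prelimit never sweeps through the interior of a negative excursion. But this is precisely the structure that is forgotten once you pass to a joint $J_1$ limit of $(Y_n,\tfrac1n T^+_{\lfloor n\cdot\rfloor})$ and then invoke a (nonexistent) continuity of the composition map on $D\times D$. The paper sidesteps this by working with the single functional $\psi(\omega)=-\omega(T^-_\omega(\cdot))$ of Proposition~\ref{PrPsi}, whose time change $T^-_\omega$ depends on $\omega$ itself; the proposition's four-case analysis shows that this coupled map \emph{is} continuous at continuous $\omega$ with zero measure of zeros and infinite negative occupation, so the continuous mapping theorem applies directly. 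Your informal remark about ``not macroscopically splitting any flat stretch'' is the right heuristic, but absent an analog of Proposition~\ref{PrPsi} the argument is incomplete, and the stated intermediate claim about composition continuity is incorrect.
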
 

Consider now the critical case $\delta=1$. It is clear from
Theorem~\ref{ThERWCrit} that the proportion of time spent in
$(-\infty,0)$ by an ERW with $\delta=1$ should converge to $0$ (see
Lemma~\ref{time_pos} below). Since the critical ERW is recurrent and
satisfies ($\mathrm{BD_M}$), $A^-_n\to\infty$ as $n\to\infty$. But
how fast does $A^-_n$ increase? Our last theorem answers this
question on a logarithmic scale and also provides scaling limits of
$X_{T^\pm_{\floor{m\cdot}}}$ when $\delta=1$.

\begin{theorem}\label{main_res2}
  Let $\delta=1$ and $A^\pm_n,\ T^\pm_n$, $n\ge 0$, be as in
  Corollary~\ref{CrPBMPOsNeg}. Then under $P_0$
\begin{itemize}
\item [(a)] $\dfrac{\log{A^-_n}}{\log n}\Rightarrow U$ as
  $n\to\infty$, where $U$ is uniform on $[0,1]$ random variable;
\item [(b)] $-\dfrac{X_{T^-_{\floor{m\cdot}}}}{\sqrt{m}}\overset{J_1}{\Rightarrow}
  W_{-1}(\cdot)$ as $m\to\infty$;
\item [(c)] there is a constant $b\in(0,\infty)$ such that
  $\dfrac{X_{T^+_{\floor{m\cdot}}}}{b \sqrt{m}\log m}\overset{J_1}{\Rightarrow} B^*(\cdot)$ as $m\to\infty$.
\end{itemize}
\end{theorem}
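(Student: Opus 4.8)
The plan is to decompose each trajectory into its excursions from the origin and to exploit the symmetry of Remark~\ref{sym}. Since $\delta=1$, after the first $M$ visits to $0$ every excursion is an independent fair coin toss ``right'' or ``left''; right (resp.\ left) excursions use only the cookies $\omega_x$ with $x\ge 1$ (resp.\ $x\le -1$), so, conditionally on the (eventually i.i.d., fair) sequence of choices, the walk $X^+_k:=X_{T^+_k}$ observed on the non-negative half-line on its own occupation clock is a reflected ERW with $\delta=1$ (the right excursions concatenated, with a geometric holding at $0$), and by Remark~\ref{sym} the walk $X^-_k:=-X_{T^-_k}$ is, in law, a reflected ERW with $\delta=-1$; the two are conditionally independent. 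By Theorem~\ref{main_res1} a right excursion of $X$ has depth and duration with tails of order $1/\log$, while by Theorem~\ref{kz14} with $\delta=-1$ a left excursion has depth tail of order $d^{-2}$ and duration tail of order $s^{-1}$, so a left excursion of depth $d$ lasts time of order $d^2$ and the negative side is diffusive. Let $K^\pm_n$ be the number of $\pm$-excursions completed by time $n$ and $R_n:=K^+_n$ the number of returns of $X^+$ to $0$ up to time $n$.

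Part~(b) is the diffusive scaling limit of the negative side; it also feeds into (a). Since $T^-_m$ is the right-continuous inverse of $A^-$, the statement is precisely $X^-_{[m\cdot]}/\sqrt m\overset{J_1}{\Rightarrow}W_{-1}(\cdot)$. I would prove it by the branching-process method of \cite{DK12} underlying Theorem~\ref{ThERWRecLim}, feeding in the sharp $\delta=-1$ tail asymptotics of Theorem~\ref{kz14} in place of the cruder estimates used there for $|\delta|<1$; the boundary value $|\delta|=1$ is exactly what makes the sharp asymptotics essential, with the reflection at $0$ in the limit produced by the return mechanism at the origin. For later use we record the consequence that the local time of $X^-$ at $0$ up to occupation time $m$ is of order $\sqrt m$, so $A^-_n$ is comparable to $(K^-_n)^2$ and the deepest left excursion before time $n$ has depth $O(\sqrt{A^-_n})=o(\sqrt n)$.

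Part~(a) is the crux, and I expect it to be the main obstacle. Using the previous paragraph together with the fair alternation of excursions — so that $K^-_n$ and $R_n$ are asymptotically equal and $A^-_n\asymp(K^-_n)^2\asymp R_n^2$ — the statement reduces to showing that $\log R_n/\log n$ converges in law to the uniform distribution on $[0,\tfrac{1}{2}]$. Heuristically, $R_n$ is governed by the successive records of the frontier $\max_{j\le k}X^+_j$: each time $X^+$ pushes the frontier to a genuinely new level $h$ it performs, in law, a fresh critical excursion, whose depth has the $1/\log$-tail of Theorem~\ref{main_res1} and whose duration, by Theorem~\ref{ThERWCrit}, is of order $h^2/(\log h)^2$, whereas between consecutive frontier records $X^+$ makes only a slowly growing number of short returns to $0$. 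Because a single critical excursion can consume a macroscopic fraction of the elapsed time, $R_n$ grows in bursts separated by long ``frozen'' stretches and is genuinely non-concentrated on the logarithmic scale; keeping track of the two clocks — the frontier and the elapsed time — one finds that $\log\sigma_k$, with $\sigma_k$ the time of the $k$-th return of $X^+$ to $0$, behaves like $k$ times a heavy-tailed (Fr\'echet-type) factor coming from the largest excursion depth, and inverting this relation yields precisely the uniform law. The delicate points are (i) the successive right excursions are genuinely dependent, because the cookies near $0$ become depleted, so the whole argument must be carried out for the reflected walk $X^+$ and not for i.i.d.\ excursions, and (ii) showing that the limit is \emph{exactly} uniform, rather than degenerate at an endpoint, which requires two-sided control, uniform in the level, of both the number of frontier records and the time spent below each record; here Theorems~\ref{main_res1} and~\ref{kz14} provide the matching upper and lower bounds.

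Granting (a), part~(c) is short. From (a), $A^-_n/n\to 0$ in probability, so $T^+_{[mt]}=[mt]+A^-_{T^+_{[mt]}}$ with $\sup_{t\le T}\big(T^+_{[mt]}-[mt]\big)\le A^-_{T^+_{[mT]}}=o(m)$, i.e.\ $\psi_m(t):=T^+_{[mt]}/m\to t$ uniformly on $[0,T]$. Since $X_{T^+_k}=X^+_k$ by definition, $X_{T^+_{[m\cdot]}}/(b\sqrt m\log m)=\big(X_{[m\cdot]}/(b\sqrt m\log m)\big)\circ\psi_m$, and composing the convergence of Theorem~\ref{ThERWCrit} (valid in the uniform topology on compacts since $B^*$ has continuous paths) with the time change $\psi_m\to\mathrm{id}$ gives $X_{T^+_{[m\cdot]}}/(b\sqrt m\log m)\overset{J_1}{\Rightarrow}B^*(\cdot)$, which is (c).
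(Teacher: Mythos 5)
Parts (b) and (c) are essentially right. For (c) your argument — use $A^-_n/n\to 0$ to show $T^+_{[mt]}/m\to t$ uniformly, then compose with Theorem~\ref{ThERWCrit} using the modulus of continuity of $B^*$ — is the paper's argument. For (b) you propose re-running the \cite{DK12} branching-process machinery on the negative-clock process directly; the paper instead introduces the walk $\tX$ with cookies deleted from the positive half-line, proves $\tX_{[n\cdot]}/\sqrt{n}\overset{J_1}{\Rightarrow} W_{0,-1}$ by the $\delta<1$ argument, couples $\tX$ with $X$ on the negative clock, and finishes with Theorem~\ref{PrPBMPOsNeg}(b) and continuity of the time-change map $\psi$. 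Your route is plausible but the paper's is lighter because it reuses rather than re-derives the scaling limit.

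The real problem is part (a), which you correctly identify as the crux; the Fr\'echet/record heuristic you sketch does not give the right answer, and the gap is not a ``delicate point'' but a qualitatively wrong picture. If the right excursions were (even approximately) i.i.d.\ with duration tail $P(T>n)\sim 2c_4/\ln n$, then the number $u_n$ of completed excursions by time $n$ would be of order $\ln n$: the truncated mean is $\IE[T\wedge n]\asymp n/\ln n$, and the probability that all of $k$ excursions have duration $\le n$ is $\approx (1-2c_4/\ln n)^k$, so $u_n=\Theta(\ln n)$. That forces $\ln u_n/\ln n\to 0$, which contradicts what must be shown, namely $2\ln u_n/\ln n\Rightarrow U$ on $[0,1]$ (equivalently $u_n\approx n^{U/2}$, polynomial with a random exponent). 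The cookie depletion is thus not a perturbation of the i.i.d.\ picture but what changes $u_n$ from logarithmic to polynomial, and a records-of-the-frontier analysis has to be recreated from scratch with quantitative control of the depleted region; you do not indicate how. Moreover, Theorems~\ref{main_res1} and~\ref{kz14} only give tails of a \emph{single} fresh excursion and cannot by themselves produce the exact uniform distribution.

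What you are missing is the coupled branching-process picture and the specific mechanism that yields the uniform law. Passing to $V^+$, the number $u_n$ of up-crossings of the edge $0\!\to\!1$ corresponds to starting $V^+$ from $u_n$ particles, and the total progeny $\Sigma^+_{u_n}$ tracks $A^+_n\approx n$. The uniform law then comes from the statement \eqref{(ii)}, $\frac{2\ln n}{\ln\Sigma^+_n}\Rightarrow U$, which the paper proves via two facts: (i) $\ln V$ is a near-harmonic function of the process (Lemma~\ref{main}, the discrete analogue of $\ln Y$ being a local martingale for $dY=dt+\sqrt{2Y}\,dB$), giving $P_n(\tau_{n^{1/x}}<\sigma_0)\to x$ and hence $\frac{\ln n}{\ln\max_{j<\sigma_0}V_j}\Rightarrow U$ (Lemma~\ref{elembp}); and (ii) the total progeny is comparable to the square of the lifetime maximum (Lemma~\ref{areabp}). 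Neither of these appears in your sketch, and without the harmonic-function input there is no route to the exact uniform limit. The remaining reduction you give — $\log A^-_n\leftrightarrow\log d_n^2\leftrightarrow\log u_n^2$ — is correct and matches the paper's chain through Lemmas~\ref{redu} and~\ref{ud}, but it only transfers the problem to $\log u_n/\log n$; the substance is \eqref{(ii)}.
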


Part (a) of the above theorem informally says that $A^-_n\asymp n^U$
where $U$ is a standard uniform random variable.  See
Section~\ref{4.2} for a heuristic derivation of this asymptotics. Part
(b) is just a simple extension of the last claim of
Corollary~\ref{CrPBMPOsNeg} to $\delta=1$. This reflects the fact that
if we consider an ERW with $\delta=1$ only at the times when it visits
the negative half-line then such process is not critical and can be
treated essentially in the same way as the case $\delta\in[0,1)$.  The
situation is different if we look at an ERW with $\delta=1$ only when
it visits the positive half-line, since neither $W_{\alpha,\beta}$ nor
$W_\alpha$ exists for $\alpha=1$. But in view of
Theorem~\ref{ThERWCrit} the statement of part (c) is not surprising.

\subsection{Organization of the paper} In Section~\ref{cbp} we explain
the connection between ERWs and some branching processes. The main
theorem of Section~\ref{cbp}, Theorem~\ref{tbp}, is an important tool
for the proofs of our main results. We illustrate this by deriving
Theorem~\ref{main_res1} as a simple corollary of
Theorem~\ref{tbp}. The proof of Theorem~\ref{tbp} is given in
Section~\ref{ptbp}. In Section~\ref{four} we prove
Corollary~\ref{CrPBMPOsNeg} and Theorem~\ref{main_res2}. Proofs of
technical lemmas are collected in the Appendix.
\section{Connection with branching processes}\label{cbp}

In this section we construct the relevant branching process (BP) and
restate (\ref{depth1}) and (\ref{time1}) in terms of of the
tails of the extinction time and the total progeny of these BPs.

We shall use the same environment $\omega\in \Omega$ and Bernoulli
random variables $(\eta_x(i))_{x\in\IZ,i\in\IN}$ as in the
construction of the ERW. This will provide us with a natural coupling
between the ERW and the BP. We define here only the BP
$V$ which corresponds to right excursions of the walk.\footnote{The BP
corresponding to left excursions, $V^-$, is constructed in a symmetric
way and will be introduced in Section~\ref{4.2}.}  For $x\in \{0\}\cup
\IN$ let
\[S_x(0)=0,\ \ S_x(m):=\inf\left\{k\ge 1:\
  \sum_{i=1}^k(1-\eta_x(i))=m\right\}-m,\ \ m\in\IN.\] Thus, $S_x(m)$ is
the number of ``successes'' before the $m$-th ``failure'' in the sequence
$\eta_x(i)$, $i\in\IN$.  Define the process $V=(V_n)_{n\ge 0}$ which
starts with $y$ particles in generation $0$ by
\begin{equation}
  \label{V}
  V_0=y,\ \
V_n=S_n(V_{n-1}),\ \ n\in\IN.
\end{equation}
If there were no biased coins, $V$ would be a Galton-Watson process
with mean 1 geometric offspring distribution. Our process uses up to
$M$ possibly biased coins in each generation, therefore, strictly
speaking, it is not a ``true'' branching process. We could recast it
as a branching process with migration (see \cite[Section 3]{KZ08})
but, since we do not use any results from branching processes
literature, we shall not need this step.

For $y\in[0,\infty)$ we shall denote by $P^V_y$ the (averaged)
probability measure corresponding to the process $V$ which starts with
$\floor{y}$ particles in generation $0$.  For $x\in [0,\infty)$ define
$\tau_x^V:=\inf\{n\in\mathbb{N}:\,V_n\ge x\}$ and
$\sigma_x^V:=\inf\{n\in\mathbb{N}:\,V_n\le x\}$. When there is no
danger of confusion we shall drop the superscript $V$.

\begin{theorem}\label{tbp}
  Let $\delta=1$ and $V=(V_n)_{n\ge 0}$ be defined by (\ref{V}).  Then
  for each $y\in\mathbb{N}$ there is a constant $c_6(y)\in(0,\infty)$
  such that
\begin{align}\label{aet}
\lim_{n\to\infty}(\ln n)\, P^V_y(\sigma_0^V>n)&=c_6(y),\\
\lim_{n\to\infty}(\ln n)\, P^V_y\left(\sum_{i=0}^{\sigma_0^V-1}V_i>n\right)&=2c_6(y).
\label{atp}
\end{align}
\end{theorem}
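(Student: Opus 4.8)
The plan is to analyze the process $V$ as a near-critical Galton–Watson process with a bounded perturbation (the first $M$ coins per generation), and to extract the tail of the extinction time $\sigma_0^V$ by a diffusion approximation for the process started from a large value, combined with a first-passage analysis for the process started from a fixed $y\in\IN$. The key point is that when $\delta=1$ the process $V$ is critical in a subtle way: if the coins were unbiased, $V$ would be exactly a critical Galton–Watson process with Geometric$(1/2)$ offspring (mean $1$, variance $2$), whose extinction time from $y$ particles has the classical tail $P(\sigma_0>n)\sim \text{const}/n$. The biased coins contribute an additional drift; the condition $\delta=1$ is precisely the borderline at which this drift, seen on the natural diffusive scale, cancels the would-be logarithmic correction and produces the $1/\ln n$ tail rather than $1/n$ or a stretched-exponential tail. (Compare the dichotomy in \eqref{ret} versus \eqref{ret0}: for $|\delta|\neq 1$ one gets a polynomial tail, and $\delta=1$ is the crossover.)

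First I would set up the diffusion approximation. Writing $V$ in terms of the martingale part and compensator, $V_{n}-V_{n-1} = (V_{n-1}-1) + (\text{drift from the $\le M$ biased coins in generation }n) + (\text{mean-zero fluctuation})$, one sees that for $V_{n-1}$ large the increment has conditional mean $O(1)$ (governed by $\delta$ and the first $M$ coins) and conditional variance of order $V_{n-1}$. Rescaling $V_{[ns]}/n \rightsquigarrow \xi(s)$, the process $\xi$ should converge to the solution of an SDE $d\xi = g\,ds + \sqrt{2\xi}\,dB$ for a constant $g$ determined by the cookie distribution; the relation $\delta=1$ forces $g$ to be the specific critical value (I expect $g$ such that $0$ is an accessible, absorbing boundary reached in finite time but with the log-tail). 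This is the standard ``squared Bessel-like'' limit for near-critical branching, and the extinction time of such a diffusion started from a point of order $n$ has $P(\sigma_0 > n\cdot t)$ behaving like a constant over $\ln n$ as $n\to\infty$ for each fixed $t$ — this is the source of the $\ln n$ in \eqref{aet}. Rather than invoking branching-process literature (which the authors explicitly avoid), I would prove the needed one-dimensional estimates directly via a Lyapunov/optional-stopping argument: find a function $h$ (roughly $h(v)\approx \ln v$) that is harmonic up to lower-order terms for the generator of $V$ away from $0$, so that $h(V_{n\wedge\sigma_0})$ is an approximate martingale, and then control the stopped process to get matching upper and lower bounds $c/\ln n \le P^V_y(\sigma_0>n) \le C/\ln n$; a renewal/regeneration argument at the successive times $V$ hits a fixed level then upgrades this to the existence of the exact limit $c_6(y)$.

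For \eqref{atp}, the total progeny $\sum_{i=0}^{\sigma_0-1} V_i$ is, on the event $\{\sigma_0 \approx n\}$, dominated by the excursion's typical height, which on the diffusive scale is of order $\sigma_0$ itself; so heuristically $\sum_{i<\sigma_0} V_i$ is of order $\sigma_0^2$. This would naively suggest $P(\sum V_i > n)\sim P(\sigma_0 > \sqrt n) \sim 2c_6(y)/\ln n$ (using $\ln\sqrt n = \tfrac12\ln n$), which is exactly the claimed constant $2c_6(y)$ in \eqref{atp}. To make this rigorous I would, conditionally on the height $V$ reaches, use the diffusion approximation to show that $\big(\sum_{i<\sigma_0}V_i\big)/\sigma_0^2$ converges in law (to $\int_0^1 \xi(s)\,ds$ for the limiting diffusion normalized to extinction at time $1$), and that this ratio is tight enough on both ends that the event $\{\sum V_i > n\}$ and the event $\{\sigma_0 > \varepsilon\sqrt n\}$ sandwich each other up to $o(1/\ln n)$; the $\ln\sqrt n = \tfrac12 \ln n$ identity then converts $c_6$ into $2c_6$.

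The main obstacle I anticipate is the fact that $V$ is \emph{not} a genuine Markov branching process: the $\le M$ biased coins in each generation make the offspring law generation-dependent in a way that depends on how many of the $M$ special coins at site $n$ were ``used up'' — but since there are at most $M$ of them and each contributes an $O(1)$ perturbation, this should wash out under the diffusive rescaling, affecting only the constant $g$ (hence $c_6(y)$) and not the $1/\ln n$ rate. Making this ``washing out'' precise — i.e., controlling the perturbation uniformly in the starting height and showing it does not destroy the approximate harmonicity of $h\approx\ln v$ — together with pinning down the exact limiting constant (as opposed to just the order of magnitude) via a regeneration argument, is where the real work lies. The recurrence/regeneration structure needed here is close to what underlies Theorem~\ref{kz14}, so I would lean on the branching-process encoding already used in \cite{KZ14} for the non-critical estimates and push it through the critical window $\delta=1$.
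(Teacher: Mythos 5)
Your broad picture has the right ingredients (the diffusion limit $dY=dt+\sqrt{2Y}\,dB$, the logarithmic scale function, the heuristic $\sum_{i<\sigma_0}V_i\asymp\sigma_0^2$ behind the factor $2c_6(y)$), but your description of the mechanism is wrong in a way that would derail the proof. For the limiting diffusion $dY=dt+\sqrt{2Y}\,dB$ (half a squared Bessel process of dimension $2$), the origin is \emph{inaccessible}, not accessible: the extinction time of the diffusion is a.s.\ infinite, so your claim that ``the extinction time of such a diffusion started from a point of order $n$ has $P(\sigma_0>nt)$ behaving like a constant over $\ln n$'' is not meaningful. The $1/\ln n$ tail does not come from an extinction-time law of the diffusion; it comes from the \emph{first-passage probability} of the discrete process $V$: one proves $P_y(\tau_n<\sigma_0)\sim c_6(y)/\ln n$ (the paper's Lemma~\ref{hitn}, via a dyadic regeneration argument driven by the log scale function), while the inaccessibility of $0$ for $Y$ is precisely what gives $P_n(\sigma_0>hn)\to1$ (Lemma~\ref{one}), converting ``reaching height $n$'' into ``surviving time $\gtrsim n$'' for the lower bound.

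You also do not address the estimate needed for the matching upper bound $P_y(\sigma_0>n)\le(1+o(1))\,c_6(y)/\ln n$: one must show the process cannot spend $m^\alpha$ steps below level $m$ before extinction, except with probability $o(1/\ln m)$ (Lemma~\ref{delay}). Without this, the time event $\{\sigma_0>n\}$ cannot be reduced to the space event $\{\tau_m<\sigma_0\}$, and the harmonic-function argument only controls the latter. Similarly, applying optional stopping directly to $h(V_{n\wedge\sigma_0})$ with $h\approx\ln v$ runs into $h(0)=-\infty$; the paper instead estimates $P_x(\sigma_{2^\ell}<\tau_{2^u})$ uniformly over dyadic levels (Lemma~\ref{main}) using approximate scale functions $h^\pm(n)=n\pm1/n$ on the $\log_2$-scale, always stopping at a positive level $2^\ell$. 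Your regeneration idea is in the right spirit for pinning down the constant, but it must be run on hitting probabilities of dyadic levels, not on the survival time directly.
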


Assume for the moment Theorem~\ref{tbp} and derive
Theorem~\ref{main_res1}.
\begin{proof}[Proof of Theorem~\ref{main_res1}]
  The proof is essentially the same as that of Theorem~1.1 in
  \cite{KZ14}. Let the ERW start with $x=1$ and the corresponding BP
  start with $y=1$. Observe that, since ERW and BP are constructed
  from the same $(\eta_x(i))_{x\in\IZ,i\in\IN}$, we
  have \[\sigma_0^V=\max\{X_n\,:\,n<T_0\}\
  \ \text{and}\ \
  T_0\mathbbm{1}_{\{T_0<\infty\}}=\bigg(2\sum_{n=0}^{\sigma_0^V-1}V_n
  - 1\bigg)\mathbbm{1}_{\{\sigma_0^V<\infty\}}.\] Therefore,
  (\ref{depth1}) and (\ref{time1}) with $c_4=c_6(1)$ follow from
  (\ref{aet}) and (\ref{atp}).  To show (\ref{ret1}) we start ERW with
  $x=0$ and condition on the first step. Since $P_{\omega,\pm 1}(T_0\ge n)$
  do not depend on $\omega_0(\cdot)$,
  \begin{align*}
    P_0(T^r_0>n)&=\IE[P_{\omega,0}(T^r_0>n)]\\
    &=\IE[\omega_0(1)P_{\omega,1}(T_0\ge
    n)]+\IE[(1-\omega_0(1))P_{\omega,-1}(T_0\ge n)]\\
    &=\IE[\omega_0(1)]P_1(T_0\ge n)+\IE[(1-\omega_0(1))]P_{-1}(T_0\ge
    n).
  \end{align*}
  By (WEL), $\IE[\omega_0(1)]>0$ and $\IE[(1-\omega_0(1))]>0$. If
  $\delta=1$ then $(\ln n)P_1(T_0\ge n)\to 2c_4$ as $n\to\infty$ by
  (\ref{time1}). By Remark~\ref{sym} and (\ref{timed}) with
  $\delta=-1$, $nP_{-1}(T_0\ge n)$ converges to a constant. We
  conclude that 
  \begin{equation}
  \label{C4C5}
  \lim_{n\to\infty} (\ln
  n)P_0(T^r_0>n)=2c_4\IE[\omega_0(1)].
  \end{equation}
   The result for $\delta=-1$
  follows by symmetry.
\end{proof}

\section{Proof of Theorem~\ref{tbp}}\label{ptbp}
The proof of Theorem~\ref{tbp} depends on a number of additional facts
which we state below and prove in the Appendix.

\begin{lemma}\label{hitn}
  Let $\delta=1$ and 
  $y\in\mathbb{N}$. Then there is a constant $c_6(y)\in(0,\infty)$
  such that \[\lim_{n\to\infty}(\ln n)\,P_y(\tau_n<\sigma_0)=c_6(y).\]
\end{lemma}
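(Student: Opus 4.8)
The lemma is the core estimate behind Theorem~\ref{tbp}: it says that, in the critical case $\delta=1$, the probability that the branching-process-with-migration $V$ started from $y$ particles reaches level $n$ before dying out decays like $c_6(y)/\ln n$. Let me think about how the author would approach this.

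The key structural fact is that $V$ is a critical branching process (mean-one geometric offspring) that has been perturbed by at most $M$ biased coins per generation, and $\delta=1$ means the total expected extra drift from those coins is exactly $+1$. The absence of the biased coins would give a classical critical Galton--Watson process for which $P_y(\tau_n<\sigma_0)\asymp 1/n$; the perturbation pushes this up to the $1/\ln n$ regime. So I expect a diffusion-approximation argument: one studies the generator / drift-variance structure of $V_n$ and compares it to a one-dimensional diffusion. For a critical GW process the natural diffusion is Feller's branching diffusion $dZ = \sqrt{Z}\,dB$, whose scale function is linear, giving hitting probability $\sim y/n$. The migration term $\delta=1$ contributes a drift that, after the Lamperti-type time change, behaves like a constant drift, turning the relevant one-dimensional diffusion into something whose scale function grows logarithmically — e.g. a squared-Bessel-type process of dimension $0$ with an added unit drift, or equivalently a process whose scale function is $\sim \ln x$. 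That logarithm is exactly the source of the $\ln n$ in the denominator.

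Concretely, here is the sequence of steps I would carry out. First, I would establish sharp one-step estimates for the increments of $V$: conditionally on $V_{n-1}=v$, the mean increment is approximately $\delta = 1$ (coming from the biased cookies, since the unbiased part is mean-preserving) and the conditional variance is approximately $c\,v$ for a constant $c>0$ (coming from the geometric fluctuations of the $v$ fair-coin offspring), with explicit control on the error terms and on the tails — this is where assumptions (IID), (WEL), $(\mathrm{BD_M})$ enter, and where I would expect to invoke estimates already developed in \cite{KZ08, KZ14}. Second, from these estimates I would identify a function $h$ (the would-be ``scale function'' / harmonic function for $V$ away from $0$) satisfying $\IE_v[h(V_1)] = h(v)(1+o(1))$ for large $v$; the heuristic computation $\tfrac12 c v\, h''(v) + h'(v) \approx 0$ forces $h'(v)\sim \mathrm{const}/v$, hence $h(v)\sim (2/c)\ln v$. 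Third, I would run an optional-stopping / martingale argument with $h(V_{n\wedge\tau_n\wedge\sigma_0})$: controlling the overshoot at level $n$ (the process reaches $[n,2n)$ say, where $h\approx \ln n$) and the behavior near $0$ (where $V$ actually hits $0$ in finitely many steps once it gets small, by (WEL)), one gets $P_y(\tau_n<\sigma_0)\ln n \to$ a finite positive limit. Fourth, I would nail down that the limit is a genuine constant $c_6(y)$ (not just bounded above and below) — this requires a renewal-type or coupling argument showing the excursion structure ``forgets'' the initial condition beyond the factor $c_6(y)$, and presumably identifies $c_6(y)$ via the diffusion limit of \cite{DK12} (the constant $b$ in Theorem~\ref{ThERWCrit} should be related).

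The main obstacle, I expect, is the passage from the crude two-sided bound $c/\ln n \le P_y(\tau_n<\sigma_0) \le C/\ln n$ to the existence of an exact limit. Two-sided bounds follow fairly mechanically from the martingale $h(V_n)$ plus overshoot control. The existence of the limit is more delicate: because $V$ is not a true Markov branching process (the migration / cookie mechanism makes the first $M$ visits to each ``site'' special), one cannot simply cite a known branching-process limit theorem, and one must instead show a genuine regularity — e.g. that $(\ln n)P_y(\tau_n<\sigma_0)$ is asymptotically multiplicatively stable in $n$, or couple $V$ with a cleaner auxiliary process (a pure critical GW process with an independent injection of particles) whose hitting asymptotics are computable. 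I would handle this by decomposing a successful excursion at an intermediate level $m$ with $1 \ll m \ll n$, using the (near-)branching property to write $P_y(\tau_n<\sigma_0)$ in terms of $P_y(\tau_m<\sigma_0)$ and $P_m(\tau_n<\sigma_0)$ up to errors, and then extracting the limit from a subadditivity/renewal argument. The secondary technical nuisance is controlling the rare event that $V$ makes an anomalously large jump (the geometric offspring law has exponential, not bounded, tails), which affects the overshoot term; a truncation argument with an exponential tail bound on $S_x(m)-m$ should dispose of it.
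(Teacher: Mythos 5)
Your proposal matches the paper's route: the paper likewise builds an approximate scale function $\sim\ln v$ (Lemma~\ref{main}, adapted from \cite{KM}, realizes this at dyadic levels $2^j$, with $h^\pm(j)=j\pm 1/j$ playing the role of a perturbed $\log_2$ scale function), controls the overshoot via Lemma~\ref{OS}, and then extracts the exact limit through your ``multiplicative stability'' idea, implemented as a telescoping product $mP_y(\sigma_0>\tau_{2^m}) = \ell P_y(\sigma_0>\tau_{2^\ell})\prod_{j=\ell+1}^m \tfrac{j}{j-1}P_y(\sigma_0>\tau_{2^j}\,|\,\sigma_0>\tau_{2^{j-1}})$ whose convergence follows from the summability of the terms' deviations from $1$. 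Two minor corrections: under the averaged measure $V$ is in fact a genuine time-homogeneous Markov chain (only the branching property, not Markovianity, is lost), and the paper does not identify $c_6(y)$ via the diffusion-limit constant $b$ of Theorem~\ref{ThERWCrit} --- it only proves the limit exists, is positive, and is finite.
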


\begin{lemma}\label{delay}
  Let $\delta=1$. For every $y\in\mathbb{N}$ and $\alpha>1$ \[\lim_{n\to\infty}
  (\ln n)P_y\left(\sum_{i=0}^{\sigma_0-1}\mathbbm{1}_{\{V_i\le 
      n\}}>n^\alpha\right)=0. \]
\end{lemma}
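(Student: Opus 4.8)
The plan is to deduce the lemma from the much stronger quantitative bound
$E_y\big[\sum_{i=0}^{\sigma_0-1}\mathbbm{1}_{\{V_i\le n\}}\big]=O(n\ln n)$.
Once this is available, Markov's inequality gives
$P_y\big(\sum_{i=0}^{\sigma_0-1}\mathbbm{1}_{\{V_i\le n\}}>n^\alpha\big)=O(n^{1-\alpha}\ln n)$, and hence
$(\ln n)\,P_y\big(\sum_{i=0}^{\sigma_0-1}\mathbbm{1}_{\{V_i\le n\}}>n^\alpha\big)=O\big((\ln n)^2 n^{1-\alpha}\big)\to 0$
since $\alpha>1$, which is exactly the claim.

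To bound the expectation I would use that, under $P_y$, the process $V$ is a time\nobreakdash-homogeneous Markov chain absorbed at $0$ which, directly from the definition of $S_n$, satisfies $E[V_{i+1}\mid V_i=m]=m+\delta=m+1$ for $m\ge M$. Cut the trajectory of $V$ before $\sigma_0$ into geometric ``scale blocks'': put $T_0:=0$ and, as long as $V_{T_k}=:\ell_k\ge 1$, set $T_{k+1}:=\inf\{i>T_k:\ V_i\ge 2\ell_k\ \text{or}\ V_i\le\lceil\ell_k/2\rceil\}$. Two ingredients are needed. First, each block is short in mean: $\sup_{\ell\ge1}\sup_{v\in[\lceil\ell/2\rceil,2\ell)}E_v[T_1]\le C\ell$; this follows from optional stopping applied to the martingale $V_{i\wedge T_1}-\delta\,(i\wedge T_1)$ together with crude control of the overshoot at $T_1$ (the one\nobreakdash-generation increments of $V$ are shifted sums of i.i.d.\ geometric\nobreakdash-type variables, hence sub\nobreakdash-exponential uniformly in the level) and of the $O(1)$ expected number of generations spent at levels below $M$. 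Second, the ``scale walk'' $Y_k:=\lfloor\log_2\ell_k\rfloor$ behaves like a simple symmetric random walk on $\IZ$ absorbed at height $\asymp\log_2 n$ (the scale at which $V$ goes extinct): its one\nobreakdash-step probabilities are bounded away from $0$ and $1$ uniformly in the scale, and --- this is the only place the criticality $\delta=1$ enters --- the probability that $V$ doubles before it halves equals $\tfrac12+o(1)$ at every scale $\ell$, with corrections that are summable in $\ell$; heuristically, the rescaled process $V_{\lfloor\ell\,\cdot\,\rfloor}/\ell$ converges as $\ell\to\infty$ to (a time change of) the squared Bessel process of dimension $2$ --- the offspring variance equals $2$ --- whose scale function is logarithmic. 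Consequently the scale function of $Y$ stays within a constant factor of linear, and a standard Green's\nobreakdash-function comparison with simple random walk shows that, started from the scale $\asymp\log_2(n/y)$ of the initial state, $Y$ visits each scale $O(\ln n)$ times in expectation before absorption.

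A block with $\ell_k\ge 2n+1$ keeps $V$ above $n$ throughout and so contributes $0$ to $\sum_i\mathbbm{1}_{\{V_i\le n\}}$, while a block at scale $2^{-j}n$ contributes at most its length $O(2^{-j}n)$; combining the two ingredients,
\[
E_y\Big[\sum_{i=0}^{\sigma_0-1}\mathbbm{1}_{\{V_i\le n\}}\Big]\ \le\ \sum_{j\ge-1}E_y\big[\,\#\{\text{blocks at scale }2^{-j}n\}\,\big]\cdot C\,2^{-j}n\ \le\ C'(\ln n)\,n\sum_{j\ge-1}2^{-j}\ =\ O(n\ln n),
\]
as desired.

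The step I expect to demand the most care is the second ingredient: turning the heuristic ``the scale walk is a symmetric random walk'' into uniform (in $\ell$) quantitative estimates, in particular establishing $\sum_\ell\big|P_\ell(\text{double before halve})-\tfrac12\big|<\infty$, which is the quantitative form of the criticality $\delta=1$ and relies on the diffusion approximation of $V$ at \emph{all} scales, not merely at scale $\asymp n$. An alternative is to extract the required Green's\nobreakdash-function bound for $Y$ from Lemma~\ref{hitn} together with the branching\nobreakdash-process estimates obtained along the way; monotonicity of $V$ in its initial value (which holds because $m\mapsto S_n(m)$ is nondecreasing) is a convenient auxiliary tool throughout.
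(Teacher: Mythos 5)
Your strategy is genuinely different from the paper's. The paper proves Lemma~\ref{LmInt}, a geometric tail bound on the time $V$ spends in a single dyadic window $[x,2x)$ before extinction, uniformly in $x$; the decisive input is the uniform lower bound $P_{x/2}(\sigma_0<\tau_{x^\alpha})>c$, which drops out of Corollary~\ref{uub} and Remark~\ref{r}. Lemma~\ref{delay} then follows from a union bound over $O(\log n)$ dyadic windows, and the resulting bound on the probability is super-polynomially small. You instead propose a first-moment bound $E_y\bigl[\sum_i\mathbbm{1}_{\{V_i\le n\}}\bigr]=O(n\log n)$ followed by Markov's inequality; that is cruder but sufficient, since any polynomial bound of exponent strictly below $\alpha$ beats $n^\alpha$ after multiplying by $\log n$. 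One thing the paper's route buys you that yours does not is reusability: the per-window exponential tail of Lemma~\ref{LmInt} is also essentially what powers the upper-tail estimate in Lemma~\ref{areabp}, whereas a moment bound only serves this one lemma.

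The genuine gap in your write-up is in the justification of the expectation bound, and in particular you misidentify the quantitative ingredient that matters. You flag $\sum_\ell\bigl|P_\ell(\text{double before halve})-\tfrac12\bigr|<\infty$ as the crux, but this is both unproven and unnecessary: what the Green's-function bound actually requires is a lower bound of order $1/j$ on the probability that, from scale $j$, the scale walk never revisits scale $j$ before extinction, uniformly in $j$. That follows from ingredients already in the paper --- the process exits scale $j$ downward with probability bounded away from $0$ and $1$ (Lemma~\ref{main}), from level $\approx 2^{j-1}$ it reaches level $2^{\ell_0}$ before $2^j$ with probability $\gtrsim 1/j$ (Lemma~\ref{main} again, cf.\ Remark~\ref{r}), and from $2^{\ell_0}$ it goes extinct before $\tau_{2^j}$ with probability $\ge 1/2$ (Corollary~\ref{uub}). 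Combined with the visit probability $\asymp 1/j$ from Lemma~\ref{hitn}, this gives a Green's function that is $O(1)$ per scale, hence $E_y[\cdot]=O(n)$ (in fact better than your claimed $O(n\log n)$). Relatedly, you write that the scale walk starts at scale $\asymp\log_2(n/y)$; but $V_0=y$ is fixed, so the starting scale is $O(1)$ --- this is exactly why the per-scale visit count is $O(1)$ rather than $O(\log n)$. Finally, the optional-stopping argument for the $O(\ell)$ block-length bound needs an integrability check (the stopping time is a priori only a.s.\ finite); this is routine via the diffusion approximation, but should be acknowledged. In short, the plan is sound, but as written the central estimate is unproved and the stated ``key input'' is the wrong one.
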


\begin{lemma}\label{one}
  Let $\delta=1$. For every
  $h>0$
  \begin{align}
\label{time0} &\lim_{n\to\infty}P_n(\sigma_0>hn)=1;\\
    \label{survive}&\lim_{n\to\infty}P_n\left(\sum_{i=0}^{\sigma_0-1}V_i>hn^2\right)=1.
  \end{align}
\end{lemma}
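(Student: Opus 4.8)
The plan is to exploit that $\delta=1$ is exactly the critical value at which $\log V_k$ becomes a martingale while $V_k$ is large. Conditioning on the current generation and using $(\mathrm{BD_M})$, one checks that for $m$ large $\IE[V_{k+1}-V_k\mid V_k=m]=\delta+O(\mathrm{e}^{-cm})$ and $\mathrm{Var}(V_{k+1}\mid V_k=m)=\sigma^2 m+O(1)$, where $\sigma^2=2$ is the variance of the mean-one geometric offspring law underlying $S_n(\cdot)$; a second-order Taylor expansion of $\log(1+\cdot)$ then gives
\[\IE[\log V_{k+1}-\log V_k\mid V_k=m]=\frac{\delta-\sigma^2/2}{m}+O(m^{-2})=O(m^{-2}),\qquad \mathrm{Var}(\log V_{k+1}-\log V_k\mid V_k=m)=\frac{\sigma^2}{m}+O(m^{-2}),\]
the cancellation being precisely $\delta=\sigma^2/2=1$. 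Hence, in the random time change $C_k:=\sigma^2\sum_{j<k}V_j^{-1}$, the process $\log V$ run along $C$ is, up to errors one must control uniformly, a Brownian motion $B$ started at $\log n$. Translating back, one unit of $C$ accumulated while $V_k\asymp M$ corresponds to $\asymp M$ generations, each carrying $\asymp M$ particles.

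The core estimate is then: for every fixed $j\in\IN$, with probability tending to $1$ the Brownian motion $B$ reaches level $\log n+j$ and accumulates at least one unit of (clock) time above $\log n+j$ before first descending to level $\log K$, where $K$ is a fixed large constant. Indeed, $B$ reaches $\log n+j$ before $\log K$ with probability $\tfrac{\log(n/K)}{j+\log(n/K)}\to1$ by gambler's ruin; and having done so, its local time at $\log n+j$ before absorption at $\log K$ is of order $\log n\to\infty$, so by excursion theory (the occupation time above a level, as a function of the local time there, is a stable-$\tfrac12$ subordinator, whose left tail is very light) the accumulated occupation time above $\log n+j$ is $\ge 1$ with probability $\ge 1-\mathrm{e}^{-c(\log n)^2}\to1$.

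Granting the invariance principle and the core estimate, both conclusions are immediate. On the above event, $\sigma_0$ is at least the first time $V$ drops to $K$, which is at least the number of generations spent with $V_k\ge n\mathrm{e}^{j}$; by the clock-to-generation conversion this is $\gtrsim n\mathrm{e}^{j}$, so choosing $j$ with $\mathrm{e}^{j}>2h$ gives $\sigma_0>hn$ with probability $\to1$, i.e.\ \eqref{time0}. For \eqref{survive}, the same block of $\gtrsim n\mathrm{e}^{j}$ generations carries $\ge n\mathrm{e}^{j}$ particles each, so $\sum_{i=0}^{\sigma_0-1}V_i\gtrsim n^2\mathrm{e}^{2j}$; choosing $j$ with $\mathrm{e}^{2j}$ larger than $\sigma^2 h$ times the implied constant gives $\sum_{i=0}^{\sigma_0-1}V_i>hn^2$ with probability $\to1$.

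The step I expect to be the real obstacle is the first one: making the time-changed invariance principle precise with enough uniformity — uniform control of the $O(1/m)$ and $O(1)$ corrections, smallness of the accumulated $\log V$-drift relative to $C$, negligibility of the (exponentially unlikely, using (WEL) and $(\mathrm{BD_M})$) events $\{V_{k+1}=0\}$ and $\{V_{k+1}=O(1)\}$ from a large state, and then transferring the ``occupation time above $\log n+j$'' event from $B$ back to $\log V$ and hence to $V$. Once this is in place, the remaining ingredients — the Taylor expansions above, the gambler's-ruin identities, and the excursion/occupation-time estimate — are routine.
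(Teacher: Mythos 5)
Your plan is conceptually the same as the paper's --- a diffusion approximation for the branching process $V$ together with a ``slow escape to zero'' property of the limiting diffusion --- but you propose to re-derive a diffusion approximation in a time-changed, logarithmic form rather than cite the one already available. The paper's proof is only a few lines: it invokes Lemma~\ref{DA} (Proposition~3.2 and Lemma~3.3 of [KZ14]), which gives $V_{\lfloor nt\rfloor\wedge\sigma_{\epsilon n}}/n\Rightarrow Y$ with $dY=dt+\sqrt{2Y}\,dB$, $Y(0)=1$, stopped at level $\epsilon$ (half a squared two-dimensional Bessel process). Since $0$ is inaccessible for this diffusion, $\tau_\epsilon\to\infty$ and $\int_0^{\tau_\epsilon}Y\,ds\to\infty$ a.s.\ as $\epsilon\to 0$, and both \eqref{time0} and \eqref{survive} follow from the monotone bound $\sigma_0\ge\sigma_{\epsilon n}$, the continuous mapping theorem, and letting $\epsilon\to 0$. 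Your drift/variance computation for $\log V$, with the exact cancellation $\delta=\sigma^2/2=1$, is precisely what makes $Y$ a two-dimensional (hence $0$-inaccessible) squared Bessel process, so the two arguments are driven by the same mechanism.

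The genuine gap is the one you already flagged: making the time-changed invariance principle for $\log V$ rigorous \emph{uniformly down to a fixed level $K$}. This is not routine. The diffusion approximation in the literature is only stated for $V$ stopped at level $\epsilon n$, i.e.\ in the regime where $V$ is comparable to $n$; pushing the comparison down to $O(1)$ scales so that gambler's-ruin and occupation-time estimates can be applied on $[\log K,\log n+j]$ is exactly the kind of dyadic, discrete-harmonic-function bookkeeping that the paper's Lemma~\ref{main} (imported from [KM]) is built for, and would need to be done here. Two smaller issues: (i) the rate $1-e^{-c(\log n)^2}$ you claim for the occupation-time event is too strong --- the local time of Brownian motion at $\log n+j$ before hitting $\log K$ is exponentially distributed with mean $\asymp\log n$, hence is $O(1)$ with probability $\asymp 1/\log n$; integrating against the stable-$\tfrac12$ lower tail yields $P(\text{occupation time}<1)=O(1/\log n)$, which still tends to $0$ but not at the rate stated; (ii) when translating clock time into generations, the inequality $V_k\ge n e^j$ must be used to bound the clock increment $\sigma^2/V_k$ from \emph{above}, so that a unit of clock time forces $\gtrsim n e^j$ generations --- your outline gets this right, but it is worth stating explicitly, as the naive reading (large $V_k$, large clock increments) is the wrong direction.
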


The following results, which will be referred to as (DA), Diffusion
Approximation, and (OS), ``Overshoot'', respectively, are borrowed from
previous works.
\begin{lemma}[Diffusion approximation]\label{DA}
  Let $\delta=1$. Fix an arbitrary $\epsilon>0$ and $y>\epsilon$. Let
  $Y^{\epsilon,n}(0)=[ny]$ and $Y^{\epsilon,n}(t)=\dfrac{V_{[nt]\wedge
      \sigma_{\varepsilon n}}}{n}$, $t\ge 0$. Then, under the averaged
  measure, $Y^{\epsilon,n}\overset{J_1}{\Rightarrow} Y$, where $Y$ is
  the solution of
\begin{equation}
\label{SqB2}
dY(t)=dt+\sqrt{2Y(t)}\, dB(t), \quad Y(0)=y,
\end{equation}
stopped when $Y$ reaches level $\epsilon$.
\end{lemma}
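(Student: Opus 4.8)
The plan is to realise $(V_n)_{n\ge0}$ as a small perturbation of a critical Galton--Watson process and to invoke the standard martingale functional central limit theorem. Put $\mathcal{G}_n=\sigma(\eta_1(\cdot),\dots,\eta_n(\cdot))$, so that $V_n$ is $\mathcal{G}_n$-measurable and, conditionally on $\mathcal{G}_n$, the variable $V_{n+1}=S_{n+1}(V_n)$ has the law of $S(m)$ at $m=V_n$, where $S(m)$ denotes $S_x(m)$ at a fresh site under the averaged measure. The elementary input is that, because only the first $M$ coins at a site are biased, for every $m>M$ one has the exact identities $\IE[S(m)]=m+\delta=m+1$ and $\mathrm{Var}(S(m))=2m+O(1)$. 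Indeed, writing $S(m)=J+\sum_{j=1}^{m-K}G_j$, with $J$ (resp.\ $K$) the number of successes (resp.\ failures) among the first $M$ tosses at the site and $(G_j)$ i.i.d.\ geometric on $\{0,1,2,\dots\}$ with mean $1$ and variance $2$ (independent of $(J,K)$), one gets $\IE[S(m)]=m+\IE[J-K]=m+\delta$ and, by conditioning on $(J,K)$, $\mathrm{Var}(S(m))=2m-2\IE[K]+\mathrm{Var}(J-K)=2m+O(1)$ since $|J-K|\le M$. Hence $N_n:=V_n-V_0-\sum_{k=0}^{n-1}g(V_k)$, with $g(v):=\IE[S(v)]-v$ (so $g\equiv1$ on $\{v>M\}$), is a $(\mathcal{G}_n)$-martingale with conditional quadratic variation $\langle N\rangle_n=\sum_{k=0}^{n-1}v(V_k)$, $v(v):=\mathrm{Var}(S(v))=2v+O(1)$ for $v>M$.

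Rescaling, set $Y^n(t)=Y^{\eps,n}(t)=V_{\lfloor nt\rfloor\wedge\sigma_{\eps n}^V}/n$. Up to the stopping time $\sigma_{\eps n}^V$ we have $V_k>\eps n>M$ for all large $n$, so $g(V_k)\equiv1$ and the drift term equals exactly $(\lfloor nt\rfloor\wedge\sigma_{\eps n}^V)/n$, which converges to $t\wedge\tau$ for $\tau$ the first time the limit reaches $\eps$. For the martingale part, $\tfrac{1}{n^2}\langle N\rangle_{\lfloor nt\rfloor\wedge\sigma_{\eps n}^V}=\tfrac{1}{n^2}\sum_k(2V_k+O(1))\approx2\int_0^{t\wedge\tau}Y^n(s)\,ds$, which should converge to $2\int_0^{t\wedge\tau}Y(s)\,ds$. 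By the martingale FCLT this forces $\tfrac1nN_{\lfloor n\cdot\rfloor\wedge\sigma_{\eps n}^V}$ to converge to a continuous martingale with quadratic variation $2\int_0^{\cdot\wedge\tau}Y(s)\,ds$, which one represents as $\int_0^{\cdot\wedge\tau}\sqrt{2Y(s)}\,dB(s)$; adding the drift gives exactly \eqref{SqB2} run until it reaches $\eps$.

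To make this rigorous I would: (i) establish tightness of $\{Y^n\}$ in $D([0,\infty))$ by Aldous's criterion, which reduces to the first- and second-moment increment bounds above together with $\IE[\sup_{t\le T}Y^n(t)]=O(1)$ (a Gronwall argument using Doob's inequality and $\IE[V_{k+1}\mid\mathcal{G}_k]=V_k+O(1)$); (ii) verify the conditional Lindeberg condition: after truncating $V$ at level $Cn$, which holds with high probability by (i), the increment $V_{k+1}-V_k=\sum_{j\le V_k}(G_j-1)+O(1)$ is a sum of at most $Cn$ i.i.d.\ sub-exponential summands, so a Bernstein bound yields $\IP(|N_{k+1}-N_k|>\rho n\mid\mathcal{G}_k)\le e^{-cn}$ and the Lindeberg sum vanishes; (iii) identify each subsequential limit as a solution of the martingale problem for $\mathcal{L}f(y)=f'(y)+yf''(y)$ stopped at $\tau$, using the decomposition and the convergences of drift and quadratic variation, and observe that $V_{\sigma_{\eps n}^V}/n\to\eps$ because a generation of size $\asymp\eps n$ produces a next generation within $O(\sqrt n)$ of it, so that the limit is genuinely stopped at level $\eps$; (iv) conclude by pathwise uniqueness for \eqref{SqB2} on $[\eps,\infty)$, where $y\mapsto\sqrt{2y}$ is locally Lipschitz and non-degenerate and the drift is constant (Yamada--Watanabe), so the stopped martingale problem has a unique solution and $Y^n\overset{J_1}{\Rightarrow}Y$. (Alternatively, one first proves convergence of the unstopped $V_{\lfloor n\cdot\rfloor}/n$ to the Feller branching diffusion with unit immigration and then applies the continuous mapping theorem to the first-passage functional at $\eps$, which is a.s.\ continuous at the law of the limit.) The hard part is precisely (ii)--(iii): controlling the unbounded, merely geometric-tailed offspring so that the FCLT applies, while simultaneously ruling out macroscopic jumps of $Y^n$ near the stopping level; this is, however, routine near-critical branching-process analysis, and since the statement is quoted from earlier work the argument above is essentially the one found there.
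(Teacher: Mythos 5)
The paper does not prove this lemma at all: it is imported wholesale, with the one-line remark that it ``is an immediate consequence of Proposition 3.2 and Lemma 3.3 of \cite{KZ14}.'' Your sketch is essentially the standard argument that underlies those cited results: the decomposition $S(m)=J+\sum_{j=1}^{m-K}G_j$ giving mean $m+\delta$ and variance $2m+O(1)$ for $m>M$, the Doob decomposition with predictable quadratic variation $\sum_k(2V_k+O(1))$, tightness plus the Lindeberg condition via sub-exponential tails of the offspring sums, and identification of the limit through the stopped martingale problem for $\mathcal{L}f(y)=f'(y)+yf''(y)$, with uniqueness on $[\eps,\infty)$ where the coefficients are nice. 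So the approach is correct and matches the provenance of the statement; the computations you give ($\IE[J-K]=\delta=1$, $\mathrm{Var}(J-K)\le M^2$) are right, and the restriction $V_k>\eps n>M$ before $\sigma_{\eps n}$ is exactly what makes the drift identically $1$.

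Two small points deserve explicit care if you were to write this out. First, the claim that $V_{\sigma^V_{\eps n}}/n\to\eps$ (no macroscopic overshoot at the stopping level) is not automatic from tightness alone; the paper keeps a dedicated estimate for this, Lemma~\ref{OS}, which bounds the overshoot by $O(\sqrt{x\log x})$ with exponentially small exceptional probability, and something of this strength is what your step (iii) implicitly uses. Second, to conclude that the \emph{stopped} processes converge you also need that the limiting diffusion a.s.\ enters $(0,\eps)$ immediately after first reaching $\eps$ (so that the first-passage map is a.s.\ continuous at the limit law); this holds because $\eps>0$ is a regular point for \eqref{SqB2}, but it should be said. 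Your parenthetical alternative (prove convergence of the unstopped $V_{\floor{n\cdot}}/n$ first) is the one step I would not wave through, since near $0$ the diffusion coefficient degenerates and the biased cookies matter — avoiding that regime is precisely why the lemma is stated with the stopping at level $\eps n$.
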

Lemma~\ref{DA} is an immediate consequence of Proposition 3.2 and
Lemma 3.3 of \cite{KZ14}. 

\begin{lemma}[``Overshoot'', Lemma 5.1 of \cite{KM}]\label{OS}
  There are constants $c_7, c_8>0$ and $N\in\mathbb{N}$ such that for all
  $x\geq N$ and $y\geq 0$
\[\max_{0\leq z<x} P_z(V_{\tau_x}>x+y\,|\,\tau_x<\sigma_0)\leq c_7 \left(e^{-c_8 y^2/x}+e^{-c_8 y}\right) \]
and
\[ \max_{x<z<4x} P_z(V_{\sigma_x\wedge \tau_{4x}}<x-y)\leq c_7 e^{-c_8 y^2/x}.\]
\end{lemma}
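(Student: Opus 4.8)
The final statement to be proved is Theorem~\ref{tbp}: for $\delta=1$ and $V$ as in \eqref{V}, starting from $y\in\IN$ particles, both $(\ln n)P^V_y(\sigma_0^V>n)$ and $(\ln n)P^V_y\bigl(\sum_{i=0}^{\sigma_0^V-1}V_i>n\bigr)$ converge to positive constants $c_6(y)$ and $2c_6(y)$ respectively. The plan is to reduce the extinction-time tail \eqref{aet} to the already-asserted depth asymptotics of Lemma~\ref{hitn} by a last-exit / renewal-type decomposition, and then to upgrade \eqref{aet} to the total-progeny tail \eqref{atp} by showing the two events are comparable up to a factor $2$ in the scaling.

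First I would prove \eqref{aet}. By Lemma~\ref{hitn}, $(\ln n)P_y(\tau_n<\sigma_0)\to c_6(y)$, so it suffices to show that $P_y(\sigma_0>n)$ and $P_y(\tau_{\phi(n)}<\sigma_0)$ are asymptotically equal for a suitable slowly varying rescaling $\phi(n)=n^{1/2+o(1)}$; indeed the natural guess, confirmed by Lemma~\ref{DA}, is that a $V$-excursion that climbs to height $h$ survives for a time of order $h$ (the drift-$1$, diffusivity-$\sqrt{2Y}$ Feller diffusion $Y$ reaches level $\varepsilon$ from a large start in time $\Theta$(height)), so $\sigma_0>n$ should be essentially equivalent to $\tau_{cn}<\sigma_0$ up to constants. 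Concretely: for the lower bound, on $\{\tau_{hn}<\sigma_0\}$ Lemma~\ref{one} (equation \eqref{time0}) applied from level $\approx hn$ gives $\sigma_0>n$ with probability tending to $1$, while for the upper bound, if $\sigma_0>n$ then the maximum $\max_{i<\sigma_0}V_i$ must be at least $\varepsilon n$ for the diffusion approximation to allow survival that long — made rigorous by Lemma~\ref{DA} together with Lemma~\ref{delay}, which controls the time spent by $V$ at heights $\le n$ and thus prevents survival being achieved purely through many small-but-long stretches. Since a multiplicative constant $h$ inside $\tau_{hn}$ does not affect $(\ln n)P_y(\tau_{hn}<\sigma_0)$ asymptotically (because $\ln(hn)/\ln n\to1$ and $c_6(y)$ is the limit regardless of the constant), sending $h\to 0$ and $h\to\infty$ pins the limit of $(\ln n)P_y(\sigma_0>n)$ to exactly $c_6(y)$.

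Next I would prove \eqref{atp}. Set $\Sigma:=\sum_{i=0}^{\sigma_0-1}V_i$. The total progeny is dominated by the generations near the peak: if $V$ reaches height $h$, then by the diffusion approximation it spends order $h$ generations at heights of order $h$, contributing order $h^2$ to $\Sigma$; conversely, $\Sigma>n$ forces $\max_i V_i$ to be of order $\sqrt n$. Thus $\{\Sigma>n\}$ should be asymptotically equivalent to $\{\tau_{c\sqrt n}<\sigma_0\}$. The lower bound uses \eqref{survive} of Lemma~\ref{one}: on $\{\tau_{h\sqrt n}<\sigma_0\}$, starting afresh from level $\approx h\sqrt n$, we get $\Sigma\ge \sum V_i > h^2 n$ (minus lower-order terms) with probability $\to 1$. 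The upper bound uses Lemma~\ref{DA} to say the diffusion started from $y\sqrt n /\sqrt n$... — more carefully, rescaling space by $\sqrt n$ and time by $\sqrt n$, $\{\Sigma>n\}$ corresponds to the rescaled area under $Y$ exceeding $1$, which (since $Y$ hits $0$ in finite time a.s.) forces the rescaled height to exceed some $\varepsilon>0$, i.e.\ $\tau_{\varepsilon\sqrt n}<\sigma_0$; the contribution of heights $\le \varepsilon\sqrt n$ to $\Sigma$ is $\le \varepsilon\sqrt n\cdot\#\{i<\sigma_0:V_i\le\varepsilon\sqrt n\}$, which is $o(n)$ on an event of probability $1-o(1/\ln n)$ by Lemma~\ref{delay} (with $n$ there replaced by $\varepsilon\sqrt n$ and $\alpha$ close to $2$, noting $(\varepsilon\sqrt n)^\alpha = o(n)$ when $\alpha<2$ — so one must take $\alpha$ slightly below $2$ and absorb the discrepancy, or iterate). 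Since $(\ln(h\sqrt n))/\ln n\to 1/2$, we get $(\ln n)P_y(\Sigma>n)\to 2c_6(y)$, which is the claimed relation between \eqref{atp} and \eqref{aet}.

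\textbf{Main obstacle.} The delicate point is the upper bounds in both halves: ruling out the possibility that $V$ survives for a long time (or accumulates large total progeny) \emph{without} ever reaching the expected height $\asymp n$ (resp.\ $\asymp\sqrt n$). The diffusion approximation of Lemma~\ref{DA} is only stated up to the stopping time $\sigma_{\varepsilon n}$ and only as a weak limit, so it does not directly forbid atypically long sojourns at intermediate heights; Lemma~\ref{delay} is precisely the quantitative input that closes this gap, but matching its exponent $\alpha$ to the exponent $2$ needed for the total-progeny bound requires a careful multi-scale decomposition (peeling off dyadic height ranges $[2^k,2^{k+1}]$ and summing the contributions), and this bookkeeping — together with verifying that none of these error terms carry a surviving $1/\ln n$ factor — is where the real work lies. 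The reduction of the constant-factor ambiguity in $\tau_{hn}$ versus $\sigma_0$ via $h\to 0,\infty$ is routine once Lemma~\ref{hitn} is granted.
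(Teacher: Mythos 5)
Your proposal does not address the statement at hand. The statement to be proved is Lemma~\ref{OS}, the ``Overshoot'' lemma: a tail bound, uniform over starting points, for the amount $V_{\tau_x}-x$ by which the branching process overshoots a level $x$ at the first up-crossing (conditioned on $\tau_x<\sigma_0$), and for the undershoot $x-V_{\sigma_x\wedge\tau_{4x}}$ at the first down-crossing from inside $(x,4x)$, with the specific two-term bound $c_7\left(e^{-c_8y^2/x}+e^{-c_8y}\right)$. What you have written is instead a plan for proving Theorem~\ref{tbp} (the logarithmic tails of the extinction time and total progeny of $V$), which is a different and much later result --- one that in fact \emph{uses} Lemma~\ref{OS} as an input (it is invoked, as ``(OS)'', in the proof of Lemma~\ref{areabp} and in the proof of Lemma~\ref{hitn}). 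Nothing in your outline engages with overshoot or undershoot quantities, with the conditioning on $\tau_x<\sigma_0$, or with the sub-Gaussian-plus-exponential form of the bound; the diffusion approximation and last-exit decomposition you describe are irrelevant to this lemma.

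For comparison: the paper itself offers no proof of Lemma~\ref{OS}; it is quoted verbatim as Lemma~5.1 of \cite{KM}, a result ``borrowed from previous works.'' A proof would proceed from the fact that, apart from at most $M$ cookie coins, one generation of $V$ given $V_{n-1}=m$ is a sum of $m$ i.i.d.\ mean-one geometric variables, so the one-step increment has uniform exponential moments; standard large-deviation bounds for such sums give a Gaussian-type estimate $e^{-cy^2/m}$ for deviations $y\lesssim m$ and an exponential estimate $e^{-cy}$ beyond, and the overshoot at $\tau_x$ is exactly one such increment launched from a value below $x$. If you intend to prove Lemma~\ref{OS}, that is the argument you need to supply; the material you have written belongs to (and partially duplicates) the proof of Theorem~\ref{tbp} in Section~\ref{ptbp}.
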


\begin{proof}
  [Proof of Theorem~\ref{tbp}] We start with the proof of (\ref{aet}).

  {\em Lower bound for (\ref{aet}).} For every $y\in \mathbb{N}$ we
  have by the strong Markov property and monotonicity in the starting
  point that
  \begin{multline*}
    P_y(\sigma_0>n)\ge
    P_y(\sigma_0>n,\tau_n<\sigma_0)\\=P_y(\sigma_0>n\,|\,\tau_n <\sigma_0)
    P_y(\tau_n<\sigma_0)\ge P_n(\sigma_0>n)P_y(\tau_n<\sigma_0).
  \end{multline*}
  Using Lemma~\ref{hitn} and
  (\ref{time0}) we get
  \begin{equation*}
\liminf_{n\to\infty}(\ln n)P_y(\sigma_0>n)\ge c_6(y).    
  \end{equation*}

  {\em Upper bound for (\ref{aet}).} Fix an arbitrary $\alpha>1$ and
  notice that for all $m>y$
\begin{align*}
  (\ln m)\, P_y(\sigma_0>m^\alpha)&\le(\ln m)\,
  P_y(\sigma_0>m^\alpha,\tau_m\le
  m^\alpha)+(\ln m)\,P_y(\tau_m\wedge \sigma_0 >m^\alpha)\\
  & \le (\ln m)\,P_y(\sigma_0>\tau_m)+(\ln
  m)\,P_y\left(\sum_{i=1}^{\sigma_0-1}\mathbbm{1}_{\{V_i\le
      m\}}>m^\alpha\right).
\end{align*}
As $m\to\infty$, the first term in the right hand side converges to
$c_6(y)$ by Lemma~\ref{hitn} and the second term vanishes due to
Lemma~\ref{delay}. 

Define $m=m(n)$ by the condition $m^\alpha\le n<(m+1)^\alpha$. Then we
get \[\limsup_{n\to\infty}(\ln n)\, P_y(\sigma_0>n)\le
\lim_{\alpha\downarrow 1}\lim_{m\to \infty}\alpha(\ln
(m+1))P_y(\sigma_0>m^\alpha)=c_6(y),\] which matches the lower bound.

We turn now to the proof of (\ref{atp}). It is enough to show
that
\begin{equation}\label{sq}
  \lim_{n\to\infty} (\ln
n)P_y\left[\sum_{i=0}^{\sigma_0-1}V_i>n^2\right]=c_6(y).
\end{equation}

{\em Lower bound for (\ref{sq}).} By Lemma~\ref{hitn} and (\ref{survive}) ,
\begin{multline*}
  \liminf_{n\to\infty} (\ln
n)P_y\left[\sum_{i=0}^{\sigma_0-1}V_i>n^2\right]\ge \liminf_{n\to\infty}(\ln
  n)P_y\left[\sum_{i=0}^{\sigma_0-1}V_i>n^2,\tau_n<\sigma_0
  \right]\\\ge\lim_{n\to\infty}(\ln n)P_y[\tau_{
    n}<\sigma_0]\lim_{n\to\infty}P_n\left[\sum_{i=0}^{\sigma_0-1} V_i>n^2\right]
 =c_6(y).
\end{multline*}

{\em Upper bound for (\ref{sq}).} The reasoning is very similar to the
one we gave for (\ref{aet}). Fix $\alpha>1.$ Using the sequence
$m=m(n)$ such that $m^\alpha\le n<(m+1)^\alpha$ we get
\begin{equation*}
  \limsup_{n\to\infty}(\ln
  n)P_y\left[\sum_{i=0}^{\sigma_0-1}V_i>n^2\right] \le \alpha
  \limsup_{m\to \infty}\ln
  (m+1)P_y\left[\sum_{i=0}^{\sigma_0-1}V_i>m^{2\alpha} \right].
\end{equation*}
Therefore, if we show that for every $\alpha>1$
\begin{equation}\label{intb}
  \limsup_{m\to\infty}(\ln m
  )P_y\left[\sum_{i=0}^{\sigma_0-1}V_i>m^{2\alpha} \right]\le c_6(y),
\end{equation}
then letting $\alpha\to 1$ and using the lower bound we shall obtain
(\ref{sq}). Notice that 
\[ (\ln m)\, P_y\left[\sum_{i=0}^{\sigma_0-1}V_i>m^{2\alpha} ,
  \tau_m<\sigma_0\right]\le (\ln m)\, P_y(\tau_m<\sigma_0), \] which
by Lemma \ref{hitn} converges to $c_6(y)$ as $m\to\infty$. Finally,
\begin{equation*}
  P_y\left[\sum_{i=0}^{\sigma_0-1}V_i>m^{2\alpha} ,
    \tau_m>\sigma_0\right] \le
  P_y(\sigma_0>m^{2\alpha-1},\tau_m>\sigma_0)\le
  P_y\left[\sum_{i=0}^{\sigma_0-1}\mathbbm{1}_{\{V_i\le m\}}>m^{2\alpha-1}\right].
\end{equation*}
By Lemma~\ref{delay} the last expression is $o(1/\ln m)$ as
$m\to\infty$, and we get (\ref{intb}).
\end{proof}

\section{Proofs of Corollary~\ref{CrPBMPOsNeg} and
  Theorem~\ref{main_res2} }\label{four}

\subsection{Proof of Corollary \ref{CrPBMPOsNeg}}
Part (a) of Corollary~\ref{CrPBMPOsNeg} follows from the following
lemma. Observe that this lemma also covers the case $\delta=1$. This will be
needed later in the section.
\begin{lemma}
  \label{time_pos} 
  Let $\delta\in[0,1]$. Then as
  $n\to\infty$ \[\frac{A^+_n}{n}\Rightarrow
  Z\left(\frac{1+\delta}{2}, \frac{1-\delta}{2}\right),\] where we set
  $Z(1,0)\equiv 1$.
\end{lemma}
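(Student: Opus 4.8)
The plan is to recognize $A_n^+/n$, up to an $O(1/n)$ error, as the occupation‑time functional of the diffusively rescaled walk evaluated on $[0,1]$, and then to pass to the limit using the known scaling limits of the ERW (Theorem~\ref{ThERWRecLim} for $\delta\in[0,1)$, Theorem~\ref{ThERWCrit} for $\delta=1$) together with the Skorokhod representation theorem. Concretely, set $\tilde X_n(s):=X_{[ns]}$ for $s\in[0,1]$; since $[ns]=i$ on $[i/n,(i+1)/n)$ one has
\[
\tfrac1n A_n^+=\int_0^1\I_{\{\tilde X_n(s)\ge0\}}\,ds+O(1/n)=\Phi(\tilde X_n/\gamma_n)+O(1/n),\qquad \Phi(f):=\mathrm{Leb}\{s\in[0,1]:f(s)\ge0\},
\]
where $\gamma_n$ is any positive normalization (the event $\{\tilde X_n(s)\ge0\}$ is scale invariant). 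So it suffices to identify the weak limit of $\Phi(\tilde X_n/\gamma_n)$ for the appropriate choice of $\gamma_n$, and the $O(1/n)$ term is harmless.

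In the case $\delta\in[0,1)$ I would take $\gamma_n=\sqrt n$. By Theorem~\ref{ThERWRecLim}, $\tilde X_n/\sqrt n\overset{J_1}{\Rightarrow}W_{\delta,-\delta}$; as the limit has continuous paths, the Skorokhod representation theorem lets us realize these on a common space with $\sup_{s\le1}|\tilde X_n(s)/\sqrt n-W_{\delta,-\delta}(s)|\to0$ almost surely. On the open set $\{s:W_{\delta,-\delta}(s)>0\}$ the rescaled walk is eventually $>0$ on every compact subset, and similarly on $\{W_{\delta,-\delta}<0\}$; by inner regularity of Lebesgue measure this yields, a.s.,
\[
\mathrm{Leb}\{W_{\delta,-\delta}>0\}\le\liminf_n\Phi(\tilde X_n/\sqrt n)\le\limsup_n\Phi(\tilde X_n/\sqrt n)\le1-\mathrm{Leb}\{W_{\delta,-\delta}<0\}.
\]
The extra ingredient needed to pinch these bounds is that $W_{\delta,-\delta}$ a.s.\ spends zero Lebesgue time at level $0$ — a standard fact about perturbed Brownian motion with parameters below $1$, which possesses a semimartingale local time at $0$ (cf.\ \cite{CD99,CPY}); equivalently $\IE\int_0^1\I_{\{W_{\delta,-\delta}(s)=0\}}\,ds=\int_0^1\IP(W_{\delta,-\delta}(s)=0)\,ds=0$. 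Then both bounds equal $A^+(1)=\int_0^1\I_{\{W_{\delta,-\delta}(u)\ge0\}}\,du$, so $\Phi(\tilde X_n/\sqrt n)\to A^+(1)$ a.s., hence $A_n^+/n\Rightarrow A^+(1)$; and Theorem~\ref{PrPBMPOsNeg}(a) with $\alpha=\delta,\ \beta=-\delta$ identifies $A^+(1)\overset{\mathrm d}{=}Z\big(\tfrac{1+\delta}{2},\tfrac{1-\delta}{2}\big)$.

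In the case $\delta=1$ I would take $\gamma_n=b\sqrt n\log n$. By Theorem~\ref{ThERWCrit}, $\tilde X_n/\gamma_n\overset{J_1}{\Rightarrow}B^*$, and again by Skorokhod representation we may assume uniform a.s.\ convergence. Since $B^*(s)=\max_{u\le s}B(u)>0$ for every $s>0$ a.s., for each $\epsilon>0$ we get $\tilde X_n(s)/\gamma_n>0$ for all $s\in[\epsilon,1]$ once $n$ is large, so $\Phi(\tilde X_n/\gamma_n)\ge1-\epsilon$; letting $\epsilon\downarrow0$ gives $\Phi(\tilde X_n/\gamma_n)\to1$ a.s., whence $A_n^+/n\Rightarrow1=Z(1,0)$, matching the stated convention. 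The main obstacle is purely technical: the functional $\Phi$ is not continuous on $D([0,1])$, so the continuous mapping theorem does not apply verbatim, and the argument relies on the limiting process carrying no Lebesgue occupation at $0$ — immediate for $B^*$, but for $W_{\delta,-\delta}$ requiring the regularity theory of perturbed Brownian motion; everything else (the $O(1/n)$ reduction, the sandwich on the open sign‑sets, the Skorokhod coupling) is routine.
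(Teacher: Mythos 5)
Your proof is correct and takes essentially the same approach as the paper: you reduce $A_n^+/n$ to an occupation-time functional of the rescaled walk, invoke the scaling limits (Theorem~\ref{ThERWRecLim} for $\delta<1$, Theorem~\ref{ThERWCrit} for $\delta=1$), and observe that the key continuity issue is the limit process spending zero Lebesgue time at level $0$. The only difference is cosmetic: the paper packages the almost-sure continuity of $\phi(\omega)=\int_0^1\I_{[0,\infty)}(\omega(t))\,dt$ as Proposition~\ref{PrPhi} and applies the continuous mapping theorem directly, whereas you reach the same conclusion inline via a Skorokhod coupling and a sandwich argument — two standard equivalent formulations of the same fact.
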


\begin{proof}
  This lemma is an easy consequence of Theorems~\ref{ThERWRecLim} and
  \ref{PrPBMPOsNeg} (for $\delta\in[0,1)$), Theorem~\ref{ThERWCrit}
  (for $\delta=1$), and the continuous mapping theorem. To unify the
  notation let \[X_{\delta,n}(\cdot):=
  \begin{cases}
    \dfrac{X_{[n\cdot]}}{\sqrt{n}},&\text{if }\delta\in[0,1);\\[3mm]
\dfrac{X_{[n\cdot]}}{b \sqrt{n}\log n},&\text{if }\delta=1;
  \end{cases}\qquad W_{1,-1}:=B^*.
  \] Define $\phi:D([0,1])\to \mathbb{R}$
  by 
  \begin{equation}
  \label{phi}
  \phi(\omega)=\int_0^1\mathbbm{1}_{[0,\infty)}(\omega(t))\,dt.
  \end{equation}
Note that the Lebesgue measure
  of the set ${\cal Z}:=\{t\in[0,1]:\ W_{\delta,-\delta}(t)=0\}$ is $0$
  $P$-a.s.. Indeed,  
  \begin{align*}
    E\int_0^1\I_{\cal Z}(t)\,dt=\int_0^1P(W_{\delta,-\delta}(t)=0)\,dt=0.
  \end{align*}
  where the last equality follows from the fact that
  $W_{-\delta, \delta}$ has a density (see \cite[Proposition 2.3 and
  Section 3.3]{CPY}). Then, if $P$ is the measure corresponding to
  $W_{\delta,-\delta}$ then by Proposition \ref{PrPhi} the map $\phi$ is
  continuous $P$-a.s. (as $P$ is supported on continuous functions) and
\[\phi(X_{\delta,n})=\frac{1}{n}
\sum_{k=0}^n\mathbbm{1}_{[0,\infty)}(X_k)=\frac{A^+_n}{n}\Rightarrow
\phi(W_{\delta,-\delta})
\overset{\mathrm{d}}{=}Z\left(\frac{1+\delta}{2},\frac{1-\delta}{2}\right).\]
The last equality follows from Theorem~\ref{PrPBMPOsNeg}(a) for
$\delta\in[0,1)$ and is trivial for $\delta=1$.
  \end{proof}
  It is enough to show the second part of
  Corollary~\ref{CrPBMPOsNeg}(b). The proof of the first part is
  similar. For every $R>0$ consider the map $\psi:D([0,\infty))\to
  D([0,R])$ defined by
  \begin{equation}
    \label{psi}
    \psi(\omega(s),\,0\le
  s<\infty)=(-\omega(T^-(s)),\,0\le T^-(s)\le R),
  \end{equation}
  where
  $T^-(s):=\inf\left\{t\ge 0:\int_0^t\I_{(-\infty,0)}(\omega(r))\,dr
    >s\right\}$.
  By Proposition \ref{PrPsi} $\psi$ is continuous $P$-a.s.\ ($P$ is
  the measure which corresponds to $W_{\delta,-\delta}$).  The desired
  statement now follows from Theorem~\ref{ThERWRecLim} and
  Theorem~\ref{PrPBMPOsNeg}(b) by the continuous mapping theorem.
  
\subsection{Heuristics and the proof of
  Theorem~\ref{main_res2}(a)} \label{4.2}

We start by introducing some additional notation which will be used
throughout the rest of Section~\ref{four}.  Denote by $d_n$ the number of
down-crossings of $X$ from $0$ to $-1$ up to time $n$ inclusively 
and
by $u_n$ the number of up-crossings of $X$ from $0$ to $1$ up to time
$n$ inclusively.  Rename the BP $V$ into $V^+$ (for right excursions) and introduce
the BP $V^-$ which corresponds to left excursions of the walk. Namely,
for $x\le 0$ let
\[F_x(0)=0,\ \ F_x(m):=\inf\left\{k\ge 1:\
  \sum_{i=1}^k\eta_x(i)=m\right\}-m,\ \ m\in\IN.\] Thus, $F_x(m)$ is
the number of ``failures'' before the $m$-th ``success'' in the sequence
$\eta_x(i)$, $i\in\IN$.  Define the process $V^-=(V_n^-)_{n\ge 0}$ which
starts with $y$ particles in generation $0$ by
\begin{equation}
  \label{V-}
  V_0^-=y,\ \
V_n^-=F_{-n}(V_{n-1}),\ \ n\in\IN.
\end{equation}
If $V_0^\pm=k$ then denote by
$\Sigma^\pm_k:=\sum_{j=0}^{\sigma_0-1}V^\pm_j$ the total progeny of
the BP $V^\pm$ over its lifetime and observe that
\begin{equation}
  \label{squeeze}
  2\Sigma^+_{u_n-1}\le A^+_n\le 2\Sigma^+_{u_n}+d_n+1 \ \ \text{and}\ \ 2\Sigma^-_{d_n-1}-d_n\le A^-_n\le 2\Sigma^-_{d_n}.
\end{equation}
To see why the first set of the above inequalities holds, note that
$A^+_n$ falls in between the total duration (including visits to $0$)
of the first $u_n-1$ and the first $u_n$ excursions to the
right. Since the number of up-crossings from one level to the next in
each excursion is equal to the number of down-crossings, by coupling
with the BP we obtain the estimates in terms of the total progeny of
the BP which starts with $u_n-1$ and $u_n$ particles
respectively. Since $A^+_n$ includes the number of visits to zero, we
have to add to the upper bound the number of visits to $0$ after which
the walker stepped to the left, i.e.\ $d_n$. An additional $1$ in the
upper bound for $A_n^+$ accounts for the possibility that $X_n\geq 0$,
in which case we have to count the up- or down- crossing in the next
step from that point.  The second set of inequalities is obtained
similarly. The only difference is that by our definition $A^-_n$ does
not include the time spent at $0$.

\medskip

\noindent{\em Informal discussion.} 
Let us explain where the uniform distribution in
Theorem~\ref{main_res2}(a) comes from. Recall that $Y$ is a half of a
squared Bessel process of dimension 2, i.e.\ the diffusion satisfying
(\ref{SqB2}), and let $\tau_x=\inf\{t\ge 0:\,Y(t)=x\}$, $x>0$.  The
uniform distribution appears naturally in the following lemma.
\begin{lemma}
  \label{elem}
Let $Y^*(t)=\max_{s\le t}Y(s)$ and $Y(0)=y>1$. 
Then \[\frac{\ln y}{\ln
  Y^*(\tau_1)} \overset{\mathrm{d}}{=}U.\] 
\end{lemma}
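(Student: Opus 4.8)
The plan is to exploit the well-known scale/speed structure of the diffusion $Y$ solving $dY = dt + \sqrt{2Y}\,dB$, which is (half of) a two-dimensional squared Bessel process. First I would identify the scale function of $Y$ on $(0,\infty)$: writing the generator as $\mathcal{L}f = f' + Y f''$, a scale function $s$ must satisfy $s' + Y s'' = 0$, i.e.\ $(s')'/s' = -1/Y$, which gives $s'(y) = 1/y$ and hence $s(y) = \ln y$ (up to affine transformations). This is exactly why the logarithm appears. With $s(y)=\ln y$, the process $s(Y(t)) = \ln Y(t)$ is a local martingale until $Y$ exits a compact subinterval of $(0,\infty)$.

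Next I would apply the optional stopping / gambler's ruin identity for the scale function. Fix $y > 1$ and a large level $N > y$; let $\tau_1$ and $\tau_N$ be the hitting times of $1$ and $N$. Since $Y$ started from $y \in (1,N)$ exits $(1,N)$ in finite time a.s.\ (the point $1$ is reached a.s.\ before $+\infty$ because $s(0^+) = -\infty$ while $s(\infty) = +\infty$, so $Y$ is recurrent at finite levels), the standard one-dimensional diffusion computation gives
\begin{equation*}
  \mathbb{P}_y(\tau_N < \tau_1) \;=\; \frac{s(y) - s(1)}{s(N) - s(1)} \;=\; \frac{\ln y}{\ln N}.
\end{equation*}
Now observe the key event identity: $\{\tau_N < \tau_1\} = \{Y^*(\tau_1) \ge N\}$, since $Y^*(\tau_1) \ge N$ precisely when $Y$ reaches level $N$ before first returning to $1$. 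Therefore
\begin{equation*}
  \mathbb{P}_y\!\left(Y^*(\tau_1) \ge N\right) \;=\; \frac{\ln y}{\ln N} \qquad \text{for all } N \ge y,
\end{equation*}
and in particular, substituting $N = y^{1/u}$ for $u \in (0,1]$,
\begin{equation*}
  \mathbb{P}_y\!\left(\frac{\ln y}{\ln Y^*(\tau_1)} \le u\right)
  = \mathbb{P}_y\!\left(\ln Y^*(\tau_1) \ge \tfrac{1}{u}\ln y\right)
  = \frac{\ln y}{(1/u)\ln y} = u,
\end{equation*}
which is exactly the statement that $\ln y / \ln Y^*(\tau_1)$ is uniform on $[0,1]$.

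A few points need care. One should confirm that $\ln Y^*(\tau_1) \ge \ln y > 0$ a.s.\ (clear, since $Y^*(\tau_1) \ge Y(0) = y$), so the ratio indeed lies in $(0,1]$, and that $Y^*(\tau_1) < \infty$ a.s.\ (true because $\tau_1 < \infty$ a.s.\ and $Y$ has continuous paths, so it attains a finite maximum on $[0,\tau_1]$); combined with the displayed formula, $\mathbb{P}_y(\ln y/\ln Y^*(\tau_1) = 0) = \lim_{N\to\infty}\ln y/\ln N = 0$, consistent with uniformity. The main obstacle — and really the only substantive analytic input — is the justification of the scale-function hitting formula for this particular degenerate diffusion: one must check that $0$ is not reached from $y>1$ before $1$ is (equivalently that $1$ is hit in finite time a.s.), which follows from $s(0^+)=-\infty$ together with Feller's boundary classification, and that optional stopping applies to the bounded martingale $\ln Y(t \wedge \tau_1 \wedge \tau_N)$ on the compact interval $[1,N]$ where the coefficients are nice. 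All of this is standard one-dimensional diffusion theory, so the proof is short once the scale function is in hand.
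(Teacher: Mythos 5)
Your proof is correct and takes essentially the same route as the paper: both observe that $\ln Y(t)$ is a local martingale (equivalently, $\ln$ is the scale function of $Y$), deduce the hitting identity $P_y(\tau_R<\tau_1)=\ln y/\ln R$, and then read off the uniform law from the event identity $\{Y^*(\tau_1)\ge R\}=\{\tau_R<\tau_1\}$. The paper states these steps tersely; your additional checks (finiteness of $\tau_1$, boundedness of the stopped martingale on $[1,N]$) are sound but are the routine one-dimensional diffusion facts the authors took for granted.
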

\begin{proof}
  It is easy to check that $\ln Y(t)$, $t\ge 0$, is a local martingale
  and so for all $R>y$
  \begin{equation}
    \label{hit}
    P_y(\tau_R<\tau_1)=\frac{\ln y}{\ln R}.
  \end{equation}
  For $x\in (0,1)$ we have
\[    P_y\left(\frac{\ln y}{\ln Y^*(\tau_1)} \le x \right)=
    P_z(Y^*(\tau_1)\ge y^{1/x })=P_y(\tau_{y^{1/x }}\le
    \tau_1)\overset{(\ref{hit})}{=}x.\qedhere \]
  \end{proof}
  The next step is to observe that for a large starting point $y$ the
  area under the path of $Y$ up to $\tau_1$ is roughly the square of
  $Y^*(\tau_1)$.
\begin{lemma}
  \label{area}
  Let $Y(0)=y>1$. Then
\begin{equation}
  \label{two}
  \frac{\ln\int_0^{\tau_1}Y(s)\,ds}{\ln Y^*(\tau_1)}\Rightarrow
  2\quad\text{as }y\to\infty.
\end{equation}
\end{lemma}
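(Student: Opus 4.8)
The plan is to sandwich the area $\int_0^{\tau_1} Y(s)\,ds$ between two quantities, each of which is comparable to a power of $Y^*(\tau_1)$, and then take logarithms. Write $R := Y^*(\tau_1)$; by Lemma~\ref{elem} we know $R \to \infty$ in probability as $y \to \infty$ (in fact $\ln R = (\ln y)/U \to \infty$), so it suffices to show that $\ln\!\big(\int_0^{\tau_1} Y(s)\,ds\big)/\ln R \Rightarrow 2$. The upper bound is the easy half: on the event $\{R < \rho\}$ for a threshold $\rho$, we have $\int_0^{\tau_1} Y(s)\,ds \le \rho \cdot \tau_1$, so $\ln\!\big(\int_0^{\tau_1} Y(s)\,ds\big) \le \ln\rho + \ln^+\tau_1$. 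The point is that $\tau_1$ has tails that are negligible on the logarithmic scale: starting from level $y$, the diffusion $Y$ (half a squared Bessel(2)) reaches level $1$ in time that is polynomially controlled, so $\ln\tau_1 = o(\ln y)$ in probability — more precisely, taking $\rho = R$ itself and using that $\ln\tau_1/\ln R$ vanishes, one gets $\limsup \ln(\int Y)/\ln R \le 2$.

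For the lower bound I would localize the path near its maximum. Fix a small $\eta \in (0,1)$ and consider the excursion of $Y$ away from the level $\eta R$ that attains the maximum $R$. On that excursion $Y$ spends some time $\Delta$ above level $\eta R$, and during that time the integrand is at least $\eta R$, so $\int_0^{\tau_1} Y(s)\,ds \ge \eta R \cdot \Delta$. It then remains to show that $\Delta$ is not too small: with probability bounded below (uniformly in the scale), the time spent above $\eta R$ by an excursion reaching height $R$ from a starting level $\eta R$ is at least of order $R$. This follows from Brownian scaling of squared Bessel processes: $Y(R\cdot)/R$ is again (a time-change of) a squared Bessel(2)-type diffusion by the scaling property $dY = dt + \sqrt{2Y}\,dB$, which is invariant under $Y(t) \mapsto Y(ct)/c$, so the rescaled excursion spends $\Theta(1)$ units of rescaled time above level $\eta$, i.e.\ $\Delta = \Theta(R)$ with probability bounded below. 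Hence $\int_0^{\tau_1} Y(s)\,ds \ge \eta R \cdot \Theta(R) = \Theta(R^2)$ on an event of probability bounded below, giving $\ln(\int Y)/\ln R \ge 2 - o(1)$ on that event; a standard argument (e.g.\ applying the strong Markov property at successive record times, or iterating so that the good event occurs eventually with probability one) upgrades this to convergence in probability.

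The main obstacle, I expect, is making the lower bound quantitative and uniform: one must show that conditionally on the maximum being large (say $R \ge \rho$), the time spent near that maximum is $\Theta(R)$ with probability bounded away from $0$, and this probability must not degrade as $\rho \to \infty$. The clean way is to invoke the exact scaling $Y(c\,\cdot)/c \overset{d}{=} Y(\cdot)$ under $Y(0) \mapsto Y(0)/c$, together with the decomposition of the squared Bessel path at its last passage to level $\eta R$ before $\tau_1$; after rescaling by $R$ this becomes a fixed, $\rho$-independent event about a standard squared Bessel(2)-type diffusion reaching level $1$ from level $\eta$ and spending at least $\epsilon_0$ rescaled time above $\eta$, which has positive probability. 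Combining the two bounds and sending $\eta \downarrow 0$ (which affects only lower-order terms $\ln\eta = O(1)$, negligible against $\ln R \to \infty$) yields \eqref{two}.
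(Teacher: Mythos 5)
Your overall strategy (upper bound via $\int_0^{\tau_1} Y \le Y^*(\tau_1)\cdot\tau_1$, lower bound via localizing near the running maximum and scaling) is close in spirit to the paper's approach for the branching-process analogue, Lemma~\ref{areabp}, but both halves of your argument contain gaps, and the upper bound rests on a false premise.

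\textbf{Upper bound.} You claim $\ln\tau_1 = o(\ln y)$, and that ``$\ln\tau_1/\ln R$ vanishes,'' and from $\ln\int Y \le \ln R + \ln\tau_1$ you conclude $\limsup \ln(\int Y)/\ln R \le 2$. Both claims are false, and the logic is also inconsistent: if $\ln\tau_1/\ln R\to 0$ were true, the displayed inequality would yield $\limsup \le 1$, not $2$. In fact $\tau_1$ is comparable to the running maximum $R$, not to $y$: by the scaling $Y(c\,\cdot)/c\overset{d}{=}Y(\cdot)$ (with $Y(0)\mapsto Y(0)/c$), the time to descend from $R$ to $R/2$ is of order $R$, and summing over dyadic scales below $R$ gives $\tau_1 = R^{1+o(1)}$ --- so $\ln\tau_1/\ln R\to 1$, not $0$. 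Your crude inequality $\int Y\le R\tau_1$ is still the right one, but to use it you must prove $\ln\tau_1\le(1+o(1))\ln R$, and that requires controlling the time spent in each dyadic annulus $[2^j,2^{j+1})$ with a tail bound (the continuous analogue of Lemma~\ref{LmInt} and the ``upper tail'' part of the proof of Lemma~\ref{areabp}). As written, this half of your argument does not stand.

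\textbf{Lower bound.} You prove that the time $\Delta$ spent above $\eta R$ satisfies $\Delta\ge cR$ ``with probability bounded below,'' and then say that a ``standard argument'' (iteration at record times) upgrades this to convergence in probability. That upgrade is not standard here and I don't see how to make it work: the conditioning is on the global maximum, so the strong Markov property does not give independent repeated trials, and once you condition on the maximum being $\approx R$ there is nothing left to iterate. In fact you have made the problem harder than it needs to be: what you actually need is $\Delta\ge R^{1-\epsilon}$ for arbitrarily small $\epsilon>0$, and this holds \emph{with probability tending to $1$} directly by scaling, because $\Delta/R$ converges in distribution (as $R\to\infty$) to a law with no atom at $0$, so $P(\Delta/R<R^{-\epsilon})\to 0$. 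That is precisely what the paper does for the branching process: conditionally on the maximum lying in $[2^k,2^{k+1})$, it uses the diffusion approximation (DA) to show that the time above $2^{k-1}$ exceeds $2^{k(1-\epsilon)}$ with probability $\to 1$ as $k\to\infty$. You should replace ``probability bounded below plus iteration'' by this direct ``probability $\to 1$'' statement; the iteration step in your write-up is a genuine gap.
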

The proof of Lemma~\ref{area} is omitted as we use it only for this
informal discussion. It can be proven in the same way as
Lemma~\ref{areabp}. 
The next statement immediately follows from Lemmas~\ref{elem} and
\ref{area}.
\begin{corollary}
  Let $Y(0)=y>1$. Then
  \begin{equation}
    \label{contv}
    \frac{2\ln y}{\ln\int_0^{\tau_1}Y(s)\,ds}\Rightarrow
  U\quad\text{as }y\to\infty.
  \end{equation}
\end{corollary}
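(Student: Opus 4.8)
The plan is to realise the quantity in (\ref{contv}) as a product of two factors whose asymptotics are already recorded in Lemmas~\ref{elem} and~\ref{area}. Concretely, I would write
\[
\frac{2\ln y}{\ln\int_0^{\tau_1}Y(s)\,ds}
= A_y\cdot C_y,\qquad
A_y:=\frac{2\ln y}{\ln Y^*(\tau_1)},\qquad
C_y:=\frac{\ln Y^*(\tau_1)}{\ln\int_0^{\tau_1}Y(s)\,ds}.
\]
This factorization makes sense: since $Y$ has continuous paths and $Y(0)=y>1$, one has $\tau_1>0$ and $Y(s)>1$ on $[0,\tau_1)$ a.s., whence $Y^*(\tau_1)\ge y>1$ and $\ln Y^*(\tau_1)\ge\ln y>0$; also $\int_0^{\tau_1}Y(s)\,ds\in(0,\infty)$ a.s.\ (the integrand is continuous on the compact interval $[0,\tau_1]$, and $\tau_1<\infty$ a.s.\ by (\ref{hit})), so every logarithm in sight is finite, and the only denominator not automatically bounded away from $0$ is $\ln\int_0^{\tau_1}Y(s)\,ds$.

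For the first factor, Lemma~\ref{elem} gives, for every fixed $y>1$, the exact distributional identity $A_y\overset{\mathrm{d}}{=}2U$ with $U$ uniform on $[0,1]$; in particular $A_y\Rightarrow 2U$ trivially as $y\to\infty$. For the second factor, Lemma~\ref{area} states that $C_y^{-1}=\ln\bigl(\int_0^{\tau_1}Y(s)\,ds\bigr)/\ln Y^*(\tau_1)\Rightarrow 2$ as $y\to\infty$. Since the limit $2$ is a nonzero constant, convergence in distribution to it is convergence in probability, and $x\mapsto 1/x$ is continuous at $x=2$; so by the continuous mapping theorem $C_y\to 1/2$ in probability. (In passing this shows $\IP\bigl(\int_0^{\tau_1}Y(s)\,ds<1\bigr)\to 0$, so the event on which the factorization above would be delicate is asymptotically negligible.)

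Finally, $A_y$ and $C_y$ are measurable functionals of one and the same diffusion path, and one of the two sequences converges in probability to a constant; Slutsky's theorem then gives the joint convergence $(A_y,C_y)\Rightarrow(2U,1/2)$, and the continuous mapping theorem applied to multiplication yields
\[
\frac{2\ln y}{\ln\int_0^{\tau_1}Y(s)\,ds}=A_yC_y\ \Rightarrow\ \tfrac12\cdot 2U=U\qquad\text{as }y\to\infty,
\]
which is exactly (\ref{contv}). I do not expect any genuine obstacle here: once Lemmas~\ref{elem} and~\ref{area} are granted, this is a one-line Slutsky argument. The only point deserving a word of care is that the reciprocal must be taken at the nonzero value $2$ — so that it is $C_y$, and not $C_y^{-1}$, that has a deterministic limit, which is what Slutsky needs — together with the minor book-keeping that keeps the denominator $\ln\int_0^{\tau_1}Y(s)\,ds$ positive with probability tending to $1$.
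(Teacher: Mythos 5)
Your proposal is correct and is precisely the argument the paper has in mind when it says the corollary "immediately follows from Lemmas~\ref{elem} and \ref{area}": factor the ratio through $\ln Y^*(\tau_1)$, use Lemma~\ref{elem} to identify the law of the first factor as $2U$ for every $y$, use Lemma~\ref{area} and continuity of $x\mapsto 1/x$ at $2$ to get the second factor converging to $1/2$ in probability, and finish with Slutsky. The only remark worth making is that all your book-keeping about positivity of $\ln\int_0^{\tau_1}Y(s)\,ds$ is, while harmless, more care than the paper bothers to record.
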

The key part of the proof of Theorem~\ref{main_res2}(a) is
the following analog of (\ref{contv}): let $V^+_0=n$, then
\begin{equation}
    \label{(ii)}
    \frac{2\ln n}
  {\ln \Sigma^+_n}\Rightarrow U\quad\text{as }n\to\infty.
  \end{equation}
  Notice that (\ref{(ii)}) could not be obtained from (\ref{contv})
  simply by the diffusion approximation, since we consider $V$ all the
  way down to the extinction time and $Y$ does not hit zero with
  probability 1. In the next subsection we prove BP versions of
  Lemmas~\ref{elem} and \ref{area} (see Lemmas~\ref{elembp} and
  \ref{areabp}) and obtain (\ref{(ii)}).

  Once we know (\ref{(ii)}), it is relatively simple to arrive at the
  conclusion of Theorem~\ref{main_res2}(a). We want to show that $\ln
  A^-_n/\ln n\Rightarrow U$. Consider the following
  chain of substitutions as $n\to\infty$:
  \begin{equation*}
    \frac{\ln A^-_n}{\ln n}\overset{(\ref{squeeze})}{\longleftrightarrow} 
    \frac {\ln \Sigma^-_{d_n}}{\ln n}\overset{\mathrm{L.\,\ref{redu}}} 
    {\longleftrightarrow} \frac{2\ln d_n}{\ln n} 
    \overset{\mathrm{L.\, \ref{ud}}} {\longleftrightarrow}\frac{2\ln u_n} 
    {\ln n} \overset{\mathrm{L.\,\ref{time_pos}}} {\longleftrightarrow} 
    \frac{2\ln u_n}{\ln A^+_n} \overset{(\ref{squeeze})} 
    {\longleftrightarrow} \frac{2\ln u_n}{\ln \Sigma^+_{u_n}}
    \longleftrightarrow\frac{2\ln n}{\ln \Sigma^+_n},
  \end{equation*}
  where the last ratio converges to $U$ by (\ref{(ii)}).  The actual
  proof combines the last three steps into a single argument.  Below
  we state Lemmas~\ref{redu} and \ref{ud} mentioned above, and use
  them together with (\ref{(ii)}) to derive
  Theorem~\ref{main_res2}(a). The proofs of Lemmas~\ref{redu} and
  \ref{ud} are postponed until Section~\ref{456}.

\begin{lemma}
  \label{redu} For every $\nu>0$, $x\in[0,1]$, and all
  sufficiently large $n$
  \begin{equation*}
    P_0\left(\frac{2\ln d_n}{\ln n}\le x-\nu\right)-\nu\le 
P_0\left(\frac{\ln \Sigma^-_{d_n}}{\ln n}\le x\right)\le 
P_0\left(\frac{2\ln d_n}{\ln n}\le x+\nu\right)+\nu
  \end{equation*}
\end{lemma}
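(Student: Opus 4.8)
The plan is to establish Lemma~\ref{redu} by a ``sandwich'' argument: relate the total progeny $\Sigma^-_{d_n}$ of the left branching process $V^-$ started with $d_n$ particles to its starting value $d_n$ via the branching-process analogs of Lemmas~\ref{elem} and \ref{area} (namely Lemmas~\ref{elembp} and \ref{areabp}, announced for the next subsection), and then transfer the resulting distributional statement to an inequality between distribution functions that holds uniformly for large $n$. The subtlety is that $d_n$ is itself random and depends on the walk, so we cannot simply ``plug in'' a deterministic starting point; instead I would condition on the value of $d_n$ and use the fact that, conditionally on $\{d_n = k\}$, the process $V^-$ started from $k$ particles evolves according to $P^{V^-}_k$ (this is exactly the coupling in \eqref{V-} and \eqref{squeeze}), so that for any Borel set $A$,
\[
P_0\!\left(\frac{\ln\Sigma^-_{d_n}}{\ln n}\in A\right)
=\sum_{k\ge 0}P_0(d_n=k)\,P^{V^-}_k\!\left(\frac{\ln\Sigma^-_k}{\ln n}\in A\right).
\]

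The key step is a uniform-in-$k$ comparison: I claim that for every $\nu>0$ there is $K=K(\nu)$ and $n_0=n_0(\nu)$ such that for all $k\ge K$ and $n\ge n_0$,
\[
P^{V^-}_k\!\left(\frac{\ln\Sigma^-_k}{2\ln k}\le 1-\nu\right)\le\nu
\qquad\text{and}\qquad
P^{V^-}_k\!\left(\frac{\ln\Sigma^-_k}{2\ln k}\ge 1+\nu\right)\le\nu,
\]
i.e. $\ln\Sigma^-_k=(2+o(1))\ln k$ in probability, \emph{uniformly} over large starting points. This is the branching-process version of Lemma~\ref{area} (via Lemma~\ref{areabp}), combined with the elementary observation $\Sigma^-_k\ge k$ (since generation $0$ alone contributes $k$) for the trivial lower bound $\ln\Sigma^-_k\ge\ln k$. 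Given this, on the event $\{\ln d_n\ge \tfrac{\nu}{4}\ln n\}$ one has $\ln\Sigma^-_{d_n}$ squeezed between $(2-\nu')\ln d_n$ and $(2+\nu')\ln d_n$ up to an event of probability $\le\nu$, which after dividing by $\ln n$ and absorbing constants into a slightly enlarged $\nu$ yields both inequalities of the lemma. On the complementary event $\{\ln d_n<\tfrac{\nu}{4}\ln n\}$, both $2\ln d_n/\ln n$ and $\ln\Sigma^-_{d_n}/\ln n$ are small (the latter because $\Sigma^-_{d_n}\le 2A^-_n\le 2(n+1)$ by \eqref{squeeze}, so $\ln\Sigma^-_{d_n}/\ln n\le 1+o(1)$, and more carefully on this event $d_n$ is small so the excursions are short)—one checks this contributes only to the boundary terms $x\pm\nu$ and is harmless. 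A small additional care is needed for small values of $x$ (near $0$): there the claim is essentially that $\{2\ln d_n/\ln n\le x\}$ forces $d_n$ to be bounded by a power of $n$ with small exponent, and then $\Sigma^-_{d_n}$ is bounded by a comparable power, which is exactly what \eqref{squeeze} and the uniform estimate give; at $x=1$ the trivial bound $\Sigma^-_{d_n}\le 2(n+1)$ already gives $\ln\Sigma^-_{d_n}/\ln n\le 1$ eventually, matching the right endpoint.

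Concretely I would carry out the steps in this order: (1) record the conditional-law identity above using the coupling \eqref{V-}–\eqref{squeeze}; (2) state and invoke the uniform estimate $\ln\Sigma^-_k=(2+o(1))\ln k$ in probability as $k\to\infty$ (deferring its proof to Lemmas~\ref{elembp}, \ref{areabp} of the next subsection, exactly as the text promises), together with the deterministic bounds $k\le\Sigma^-_k$ and $\Sigma^-_{d_n}\le 2(n+1)$; (3) split according to whether $\ln d_n$ is larger or smaller than $\tfrac{\nu}{4}\ln n$ and on the large part substitute $2\ln d_n$ for $\ln\Sigma^-_{d_n}$ with error $\le\nu$; (4) check the small part is absorbed into the $x\pm\nu$ shifts; (5) re-assemble into the two-sided estimate claimed, choosing $n$ large enough that $K(\nu)$ and the $o(1)$ terms are under control.

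The main obstacle I anticipate is the \emph{uniformity in the starting point} $k$ of the estimate $\ln\Sigma^-_k\approx 2\ln k$: Lemma~\ref{area} as stated is a single-process limit as $y\to\infty$, and the diffusion approximation (Lemma~\ref{DA}) is only valid down to a fixed level $\varepsilon n$, not down to extinction, so one must control the contribution of the branching process below level $\varepsilon k$ and argue it is negligible (in log-scale) uniformly. This is precisely the gap flagged in the text after \eqref{(ii)} (``(ii) could not be obtained from (\ref{contv}) simply by the diffusion approximation''), and it is handled by the branching-specific Lemmas~\ref{elembp} and \ref{areabp}; for the purposes of Lemma~\ref{redu} I would simply quote those once available. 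The only genuinely new bookkeeping here is tracking that the random index $d_n$ does not spoil uniformity, which the conditioning decomposition makes routine.
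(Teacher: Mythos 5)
You have correctly identified the key fact you need---$\ln\Sigma^-_k=(2+o(1))\ln k$ in probability, uniformly for large $k$---but you attribute it to the wrong source, and this is a genuine gap. You cite Lemmas~\ref{elembp} and \ref{areabp} as the tools that would deliver this, but those lemmas concern the \emph{critical} branching process $V=V^+$ (which corresponds to right excursions and drift $+1$). For $V^+$ the claim is actually \emph{false}: combining Lemmas~\ref{elembp} and \ref{areabp} gives precisely \eqref{(ii)}, namely $2\ln k/\ln\Sigma^+_k\Rightarrow U$ with $U$ uniform on $[0,1]$, so $\ln\Sigma^+_k/\ln k$ has a nondegenerate heavy-tailed limit and is not concentrated near $2$. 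The concentration you need is special to the \emph{subcritical} process $V^-$ (left excursions see effective drift $-1$ when $\delta=1$), and the paper obtains it from a completely different and much simpler result, namely Lemma~\ref{km}: $\Sigma^-_k/k^2$ converges in distribution to the proper positive random variable $\int_0^{\tau_0}Y^-\,ds$, where $Y^-$ has drift $-1$ and hence \emph{does} hit $0$. This is why your ``main obstacle'' (the diffusion approximation being valid only down to level $\epsilon n$) does not arise for $V^-$: the diffusion $Y^-$ is absorbed at $0$ in finite time, and the convergence of the total progeny all the way to extinction is exactly the content of Lemma~\ref{km} (equation (4.4) of Theorem 4.1 in \cite{KZ14}). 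Also note that your ``trivial lower bound'' $\Sigma^-_k\ge k$, hence $\ln\Sigma^-_k\ge\ln k$, falls well short of the needed $\ln\Sigma^-_k\ge(2-\nu)\ln k$; the latter again comes from the proper scaling limit in Lemma~\ref{km}.

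Once you have the correct tool, the argument is shorter than your conditioning decomposition. The paper splits on $\{d_n\le n^{x/2}\ln n\}$ versus its complement for the upper bound and on $\{d_n\le n^{x/2}/\ln n\}$ for the lower bound, and on the ``bad'' events uses the monotone coupling (the event $\{d_n>k_0,\ \Sigma^-_{d_n}\le n^x\}$ is contained in $\{\Sigma^-_{k_0}\le n^x\}$, and symmetrically for the other direction) together with Lemma~\ref{km} applied at the deterministic cutoff. The extra $\ln n$ factors absorb the fluctuation of $\Sigma^-_k/k^2$ and yield the $\pm\nu$ shifts in the exponent, and no conditioning on the random value of $d_n$ is needed.
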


\begin{lemma}
  \label{ud} The following statements hold with probability 1 as $n\to\infty$:
  \begin{align}
    u_n&\to\infty;\ \ d_n\to\infty;\label{ab}\\ &\frac{d_n}{u_n}\to 1.\label{du}
  \end{align}
\end{lemma}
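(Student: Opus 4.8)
The plan is to reduce the statement to the law of large numbers for the (eventually fair) coins that the walk consults at the origin. Write $r_n := \#\{0\le j\le n:\ X_j=0\}$ for the number of visits of $X$ to $0$ up to time $n$. First I would record two elementary facts. (i) Every up-crossing of $X$ from $0$ to $1$ and every down-crossing from $0$ to $-1$ is the step taken immediately after a visit of $X$ to $0$, and conversely each visit to $0$ at a time $j<n$ contributes exactly one such crossing within $\{0,\dots,n\}$ (the contribution of a visit at time $j=n$, possible only if $X_n=0$, falls at time $n+1$); hence $u_n+d_n=r_n-\mathbbm{1}_{\{X_n=0\}}$. (ii) At the $k$-th visit of $X$ to $0$ the walk uses the coin $\eta_0(k)$, stepping to $+1$ when $\eta_0(k)=1$ (an up-crossing) and to $-1$ when $\eta_0(k)=0$ (a down-crossing). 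Setting $G_K:=\sum_{k=1}^{K}\eta_0(k)$ and $H_K:=K-G_K$, facts (i)--(ii) give $|u_n-G_{r_n}|\le 1$ and $|d_n-H_{r_n}|\le 1$ for every $n$.

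Next I would use recurrence: since $\delta=1$, we have $|\delta|\le 1$, so $X$ returns to $0$ infinitely often $P_0$-a.s., i.e.\ $r_n\to\infty$ $P_0$-a.s. Then I would apply the strong law of large numbers. By (IID) and ($\mathrm{BD_M}$), under $P_0$ the coins $\eta_0(M+1),\eta_0(M+2),\dots$ are i.i.d.\ Bernoulli$(1/2)$, while $\sum_{k=1}^{M}\eta_0(k)\in[0,M]$ is a bounded (hence asymptotically negligible) contribution; therefore $P_0$-a.s.\ $G_K/K\to 1/2$ and $H_K/K\to 1/2$ as $K\to\infty$, and in particular $G_K\to\infty$, $H_K\to\infty$. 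On the probability-one event where $r_n\to\infty$ together with the last two limits, the bounds of the previous paragraph yield $u_n\to\infty$ and $d_n\to\infty$ (this is (\ref{ab})), as well as $u_n/r_n\to 1/2$ and $d_n/r_n\to 1/2$, whence $d_n/u_n\to 1$ (this is (\ref{du})).

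I do not expect a genuine obstacle; the only points requiring care are the bookkeeping in (i)--(ii) relating $(u_n,d_n)$ to $(G_{r_n},H_{r_n})$ up to the $O(1)$ correction at times with $X_n=0$, and the remark that no independence between $r_n$ and the ``future'' coins $\eta_0(k)$, $k>r_n$, is needed: the argument only invokes the almost-sure convergence $G_K/K\to 1/2$ along \emph{every} sequence $K\to\infty$ and then specializes to the random sequence $K=r_n$. Assumption ($\mathrm{BD_M}$) enters exactly to make the coins at the origin eventually fair, so that the at most $M$ possibly biased cookies there perturb the counts only by $O(1)$ and do not affect the limiting ratio.
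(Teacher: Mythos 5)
Your proposal is correct and is essentially the same argument as the paper's: reduce to the number of visits to the origin (the paper calls it $L_n$, you call it $r_n$), use recurrence to ensure it tends to infinity, and apply the strong law of large numbers to the eventually fair coins $\eta_0(i)$, $i>M$, together with $O(1)$ bookkeeping corrections. You are a bit more explicit about the combinatorial identity $u_n+d_n=r_n-\mathbbm{1}_{\{X_n=0\}}$ and about why the random index $r_n$ causes no independence issues, but the decomposition and the key tools are identical.
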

  
\medskip

\begin{proof}[Proof of Theorem~\ref{main_res2}(a)]
  By (\ref{squeeze}), Lemma~\ref{redu}, and Lemma~\ref{ud} it is
  enough to show that $2(\ln u_n)/(\ln n)\Rightarrow U$ as $n\to\infty$.

  Let $x\in(0,1)$. Fix an arbitrary $\nu>0$ and 
  $\epsilon\in(0,1/2)$. Then
\begin{align*}
  P\left(\frac{2\ln u_n}{\ln n}\le x\right) &\le P\left(
    2\Sigma^+_{\floor{n^{x/2}}}\ge
    (1-2\epsilon)n\right)+P\left(\frac{2\ln u_n}{\ln n}\le
    x,\,2\Sigma^+_{\floor{n^{x/2}}} <(1-2\epsilon)n\right)\\&=
  P\left(2\Sigma^+_{\floor{n^{x/2}}}\ge
    (1-2\epsilon)n\right)+P\left(u_n\le \floor{n^{x/2}},\,
    2\Sigma^+_{\floor{n^{x/2}}} <(1-2\epsilon)n\right).\\
\end{align*}
Note that by Lemma~\ref{time_pos} with probability at least $1-\nu/2$
for all large $n$ we have that
\begin{equation}
\label{A-u}
(1-\epsilon)n+1\le A^+_n\overset{(\ref{squeeze})}{\le} 2\Sigma^+_{u_n}+d_n+1.
\end{equation}
Moreover, on the
set $\{u_n\le \floor{n^{x/2}}\}$ we have by coupling that
\begin{equation}
\label{SIncr}
\Sigma^+_{u_n}\le \Sigma^+_{\floor{n^{x/2}}}.
\end{equation}
Inequalities \eqref{A-u} and \eqref{SIncr} imply that 
\[2\Sigma^+_{\floor{n^{x/2}}}\ge (1-\epsilon)n-d_n\ge
n\left(1-\epsilon-\frac{d_n}{n}\right).\]
Since $d_n/n\le A^-_n/n\to 0$ in probability by Lemma~\ref{time_pos},
we can conclude that for all sufficiently large $n$
\[P\left(u_n\le \floor{n^{x/2}}, 2\Sigma^+_{\floor{n^{x/2}}}
  <(1-2\epsilon)n\right)\le \nu.\]
Hence, for all sufficiently large $n$
\begin{align*}
  P\left(\frac{2\ln u_n}{\ln n}\le x\right)&\le
  P\left(2\Sigma^+_{\floor{n^{x/2}}}\ge (1-2\epsilon)n\right)+\nu \\&\le
  P\left(\frac{2\ln\floor{n^{x/2}}}{\ln \Sigma^+_{\floor{n^{x/2}}}}\le
    x+\nu\right)+\nu\overset{(\ref{(ii)})}{\le} x+3\nu.
\end{align*}

Towards a lower bound, observe that by coupling
$\{2\Sigma^+_z > n\}\subset \{u_n\le z\}$ for all $z\in\IN$. Using
this fact, Lemma~\ref{ud}, and (\ref{(ii)}) we get for all
sufficiently large $n$ that
\begin{align*}
  P\left(\frac{2\ln u_n}{\ln n}\le x\right) &\ge P\left(u_n\le
    \floor{n^{x/2}},\,2\Sigma^+_{\floor{n^{x/2}}} > n\right)\\&=
  P\left(2\Sigma^+_{\floor{n^{x/2}}}> n\right)\ge
  P\left(\frac{2\ln\floor{n^{x/2}}}{\ln
      \Sigma^+_{\floor{n^{x/2}}}}\le x-\nu\right)\ge x-3\nu. \qedhere 
\end{align*}
\end{proof}

\subsection{The lifetime maximum and progeny of a critical BP}
In this subsection we prove (\ref{(ii)}). It is an immediate consequence of
the following two lemmas.
\begin{lemma}
  \label{elembp} 
Let $V_0=n>1$. Then \[\frac{\ln n}{\ln
    \max_{j<\sigma_0}V_j} \Rightarrow U\quad\text{as }n\to\infty.\]
\end{lemma}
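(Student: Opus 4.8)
The plan is to mimic the proof of Lemma~\ref{elem} at the level of the branching process, replacing the exact martingale computation for $Y$ with an approximate one for $V$. The natural candidate martingale is $\ln V_n$: for the unperturbed Galton--Watson process with mean-one geometric offspring, a direct computation shows $\IE[\ln S_1(v)]=\ln v+O(1/v)$, so $\ln V_{n\wedge\sigma_0}$ is ``almost'' a martingale as long as $V_n$ stays large, and the $O(1/v)$ corrections are summable along a path that climbs to a high level. The first step is therefore to establish the branching analog of (\ref{hit}): for $1<y<R$ with $R$ large,
\[
  P_y^V(\tau_R<\sigma_0)=\frac{\ln y}{\ln R}\bigl(1+o(1)\bigr)
\]
uniformly in $y$ as $R\to\infty$. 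This is where the perturbations (the up-to-$M$ biased coins per generation) must be controlled: since the drift they contribute is bounded and acts only in the ``early'' part of each generation, their cumulative effect on $\ln V_n$ over an excursion from $y$ up to $R$ is $O(1)$ relative to $\ln R$, hence negligible after dividing by $\ln R$. I would prove the two-sided bound by a standard optional-stopping argument applied to $\ln V_{n\wedge\tau_R\wedge\sigma_0}$ together with the overshoot control (OS) of Lemma~\ref{OS} to show $V_{\tau_R}=R(1+o(1))$ on $\{\tau_R<\sigma_0\}$ (so $\IE[\ln V_{\tau_R}\mid\tau_R<\sigma_0]=\ln R+o(\ln R)$) and the crude fact that $\ln V_{\sigma_0-1}=O(1)$ near extinction, which is where the geometric offspring law and $(\mathrm{BD_M})$ both enter.

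Granting the hitting estimate, the lemma follows exactly as Lemma~\ref{elem} followed from (\ref{hit}). Write $V^*:=\max_{j<\sigma_0}V_j$. For $x\in(0,1)$,
\[
  P_n^V\!\left(\frac{\ln n}{\ln V^*}\le x\right)
  =P_n^V\!\bigl(V^*\ge n^{1/x}\bigr)
  =P_n^V\!\bigl(\tau_{\lceil n^{1/x}\rceil}<\sigma_0\bigr)
  =\frac{\ln n}{\ln\lceil n^{1/x}\rceil}\bigl(1+o(1)\bigr)\xrightarrow[n\to\infty]{}x,
\]
and one checks the boundary cases $x\downarrow 0$ and $x\uparrow 1$ separately (for $x$ near $1$ use that $V^*\ge n$ always, so the ratio is $\le 1$; for $x$ near $0$ use $P_n^V(\tau_R<\sigma_0)\to 0$ as $R\to\infty$ for fixed... — rather, as $n\to\infty$ the event forces $V^*$ to be a huge power of $n$, which has vanishing probability). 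This identifies the limit as the uniform distribution on $[0,1]$.

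The main obstacle is the uniformity in the hitting estimate: I need $P_y^V(\tau_R<\sigma_0)=(\ln y/\ln R)(1+o(1))$ with the error $o(1)$ as $R\to\infty$ uniformly over $1<y\le R$ (or at least over the range of $y$ that actually arises, namely $y$ of order a power of $n$ and $R$ another power of $n$). The delicate point is that $\ln V_n$ is only a supermartingale-up-to-errors, and near the extinction time the process is no longer diffusive, so the clean identity $P_y(\tau_R<\sigma_0)=\ln y/\ln R$ for $Y$ acquires corrections; I must show these corrections are $o(\ln R)$ and do not depend badly on $y$. I expect to handle this by splitting the excursion at the last visit to a fixed large level $L$ and using (DA) above level $L$ (where $\ln V$ is genuinely close to the $\ln Y$ martingale by Lemma~\ref{DA}) and (OS) plus a direct estimate below level $L$; the contribution of the sub-$L$ part to $\ln V^*$ is $O(\ln L)=o(\ln n)$, so it washes out. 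An alternative, perhaps cleaner, route is to avoid proving the hitting estimate in isolation and instead couple $V$ above level $\eps n$ with the diffusion $Y$ via Lemma~\ref{DA}, invoke Lemma~\ref{elem} for $Y$, and then argue that the part of the $V$-excursion spent below $\eps n$ changes $\ln V^*/\ln n$ by at most $o(1)$; this reduces the whole lemma to the already-available diffusion approximation plus the overshoot bound.
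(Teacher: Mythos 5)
Your reduction to showing $P_n(\tau_{n^{1/x}}<\sigma_0)\to x$ is exactly how the paper proceeds, and at that point the paper simply invokes two tools it has already built in the Appendix: Lemma~\ref{main}, a sharp two-sided dyadic bound on $P_x[\sigma_{2^\ell}<\tau_{2^u}]$ using the approximate scale function $h^\pm(n)=n\pm1/n$, and Corollary~\ref{uub}, which gives $(\ln n)P_y(\sigma_0>\tau_n)\le c_{10}(y)$ for a \emph{fixed} starting point $y$ and so controls the possibility that the process drops to a bounded level and re-climbs. Splitting on $\{\tau_{n^{1/x}}<\sigma_y\}$ versus its complement for a fixed large $y$ then finishes the upper bound in two lines, and Remark~\ref{r} gives the lower bound. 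Your plan to re-derive the hitting estimate from scratch is not wrong in spirit, but the computation you sketch contains a real error. For the unperturbed mean-one geometric offspring one has $\IE[\ln S_1(v)]=\ln v-1/v+O(1/v^2)$, and this $-1/v$ drift is \emph{not} summable along an excursion from $y$ up to $R$: the process spends of order $v$ generations near level $v$, so the cumulative drift in $\ln V$ is of order $-\ln(R/y)$, the same order as the quantity you are trying to estimate. What rescues the approach is that $\delta=1$ shifts $\IE[S(v)]$ to $v+1+o(1)$, adding $+1/v$ to $\IE[\ln S(v)]$ and \emph{exactly cancelling} the $-1/v$, leaving $O(1/v^2)$, which \emph{is} summable. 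Your description of the cookie drift as a bounded perturbation whose ``cumulative effect \ldots is $O(1)$ relative to $\ln R$, hence negligible'' has the roles reversed: the cookies are not a small correction on top of a clean martingale, they are precisely what makes $\ln V$ an approximate martingale at all.

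The alternative route you mention --- couple $V$ to $Y$ above level $\epsilon n$ via Lemma~\ref{DA} and then invoke Lemma~\ref{elem} for $Y$ --- has a genuine scale mismatch. Lemma~\ref{DA} asserts $V_{[n\cdot]\wedge\sigma_{\epsilon n}}/n\overset{J_1}{\Rightarrow}Y$ with $Y(0)\approx 1$ and $Y$ stopped at $\tau_\epsilon$, for a \emph{fixed} $\epsilon>0$. The event $\{\tau_{n^{1/x}}<\sigma_0\}$ corresponds, after rescaling by $n$, to the rescaled process exceeding level $n^{1/x-1}\to\infty$; but $P_1(\sup_{t\le\tau_\epsilon}Y(t)\ge R)=\frac{-\ln\epsilon}{\ln R-\ln\epsilon}\to 0$ as $R\to\infty$ for fixed $\epsilon$, so a single diffusion approximation at scale $n$ makes the target event vanish in the limit rather than converge to $x$. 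To recover the answer $x$ you would need $\epsilon=\epsilon_n\sim 1/n$, i.e.\ you would have to follow the process down to $O(1)$ levels, which is exactly the regime where the diffusion approximation fails. This is why the paper works at dyadic scales simultaneously (Lemma~\ref{main}) and handles the near-extinction regime by the separate $O(1/\ln n)$ bound of Corollary~\ref{uub}, rather than by a one-scale diffusion coupling.
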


\begin{lemma}
  \label{areabp} Let $V_0=n>1$. Then \[\frac{\ln
    \sum_{j=0}^{\sigma_0-1}V_j}{\ln \max_{j<\sigma_0}V_j} \Rightarrow
  2\quad\text{as }n\to\infty.\]
\end{lemma}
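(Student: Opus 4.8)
The plan is to establish two matching bounds showing that $\ln\bigl(\sum_{j=0}^{\sigma_0-1}V_j\bigr)$ is, up to lower-order corrections, equal to $2\ln M_n$, where $M_n:=\max_{j<\sigma_0}V_j$. One inequality is essentially trivial: since the BP attains the value $M_n$ at some generation and, by the overshoot control, stays near that level for a positive fraction of time, the total progeny is at least of order $M_n^2$ with high probability. The reverse inequality is the substantive one: one must rule out the scenario in which the BP, after reaching a moderate maximum, lingers at low levels for an enormously long time, inflating $\sum_j V_j$ well beyond $M_n^2$. This is exactly the phenomenon controlled by Lemma~\ref{delay}, which tells us that the number of generations spent at or below level $m$ before extinction is $o(m^{1+\epsilon})$ on the relevant scale, so that the contribution of the ``low'' part of the trajectory to $\sum_j V_j$ is negligible on the logarithmic scale compared to $M_n^2$.

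The steps, in order, are as follows. First I would fix $\epsilon>0$ and condition on the event $\{M_n \in [m, 2m)\}$ for a dyadic-type range of values $m$; by Lemma~\ref{elembp} the relevant $m$ satisfy $\ln m \asymp U \ln n$ in distribution, but for the argument it suffices to work uniformly over $m$ in a range like $[n^{\epsilon}, n^{1/\epsilon}]$ (together with the easy observation that $M_n \ge V_0 = n$ forces $m\ge n$, and an a priori upper tail bound on $M_n$ from (OS) and the diffusion approximation handles very large $m$). Second, for the \emph{lower} bound on $\sum_j V_j$: on $\{M_n\ge m\}$, let $\tau_m$ be the first hitting time of level $m$; by the second estimate in Lemma~\ref{OS} (``Overshoot'', the downcrossing part), starting from a point in $[m,4m)$ the BP does not drop below $m/2$ before time of order... — more precisely, with probability bounded below it spends at least $c\,m$ generations above level $m/2$ before reaching $m/2$ or $4m$, so $\sum_j V_j \ge c\, m^2$ on an event of probability bounded away from $0$, and then a Markov-type restart argument (or direct use of Lemma~\ref{one}, (\ref{survive}), with the starting point $m$) upgrades this to high probability. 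Third, for the \emph{upper} bound: split $\sum_{j=0}^{\sigma_0-1}V_j = \sum_j V_j\I_{\{V_j\le m^{1+\epsilon}\}} + \sum_j V_j\I_{\{V_j> m^{1+\epsilon}\}}$. On $\{M_n < 2m\}$ the second sum is empty once $m^{1+\epsilon}\ge 2m$, i.e. always for large $m$; wait — more carefully, on $\{M_n\in[m,2m)\}$ every $V_j$ is $< 2m \le m^{1+\epsilon}$, so the whole sum equals $\sum_j V_j\I_{\{V_j\le m^{1+\epsilon}\}} \le m^{1+\epsilon}\cdot \#\{j<\sigma_0:\,V_j\le m^{1+\epsilon}\}$. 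By Lemma~\ref{delay} (applied with $n$ there equal to $m^{1+\epsilon}$ and $\alpha$ slightly bigger than $1$), this count is $\le (m^{1+\epsilon})^{1+\epsilon}$ with probability $1-o(1/\ln m)$, giving $\sum_j V_j \le m^{(1+\epsilon)^2+1+\epsilon} \le m^{2+C\epsilon}$. Combined with $M_n\ge m$, this yields $\ln(\sum_j V_j)/\ln M_n \le 2 + C\epsilon$ with high probability.

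Assembling: from the lower bound $\sum_j V_j\ge c\,M_n^2$ w.h.p. we get the ratio is $\ge 2 - o(1)$, and from the upper bound it is $\le 2+C\epsilon$ w.h.p.; letting $\epsilon\downarrow 0$ gives convergence in probability to $2$, which is the claim. The main obstacle is the upper bound, and specifically making the application of Lemma~\ref{delay} \emph{uniform} over the dyadic ranges of $m$ that carry non-vanishing probability: since $m$ ranges over scales from $n$ up to $n^{1/\epsilon}$, one needs the error in Lemma~\ref{delay} to be summable (or at least uniformly small) over these $\sim \log n$ scales. This is handled by noting that $P(M_n\ge m)$ itself decays — by Lemma~\ref{elembp}, $P(M_n \ge n^{1/x}) \approx x$, so large scales $m$ are increasingly unlikely — so a union bound over dyadic $m\in[n, n^{1/\epsilon}]$ of the events $\{M_n\in[m,2m)\}\cap\{\text{Lemma~\ref{delay} fails at scale }m^{1+\epsilon}\}$ can be made $o(1)$; one may also need the crude large-deviation tail $P(M_n\ge n^{1/\epsilon})=O(\epsilon)$ from (OS) plus (DA) to cut off the top. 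A secondary point requiring care is that $\sigma_0<\infty$ a.s. (so $M_n$ and the sums are well-defined finite random variables), which follows from criticality as already used for the ERW recurrence in Theorem~\ref{kz14}.
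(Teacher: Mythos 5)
Your overall architecture (dyadic decomposition according to $M_n:=\max_{j<\sigma_0}V_j$, separate treatment of the two tails) matches the paper. The upper-tail estimate is where you genuinely deviate: the paper controls the time spent in each dyadic band $[2^j,2^{j+1})$ directly with a geometric decay estimate (a Lemma~\ref{LmInt}-style bound on $P_n(O_j\ge m_k)$), whereas you re-use Lemma~\ref{delay} applied at scale $2m$, noting that on $\{M_n<2m\}$ every term of the sum is below $2m$, so $\sum_j V_j\le 2m\cdot\#\{j:V_j\le 2m\}\le(2m)^{1+\alpha}$ with probability $1-o(1/\ln m)$. That is a clean and valid alternative, with one caveat you should make explicit: Lemma~\ref{delay} as stated fixes the starting population $y$ while $n\to\infty$, but here the starting population is $n$, which grows along with $m$. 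Fortunately its proof (via Lemma~\ref{LmInt}, where the only $y$-dependence is a factor $P_y(\rho_0<\sigma_0)\le 1$) is uniform in $y$, so the gap is notational rather than mathematical.

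The lower tail is where your plan has a real hole. You propose to show that on $\{M_n\ge m\}$ the process, after hitting $m$, stays above $m/2$ for $\gtrsim m$ generations ``with high probability'', so $\sum_j V_j\gtrsim m^2$; you then union over dyadic $m$. But the resulting bound has the form $\sum_{k\ge k_0} P_n(\tau_{2^k}<\sigma_0)\cdot\nu$, and since $P_n(\tau_{2^k}<\sigma_0)\approx k_0/k$ (Lemma~\ref{main}/Corollary~\ref{uub}), the sum $\sum_{k\ge k_0}(k_0/k)\nu$ diverges; a fixed $\nu$, however small, does not make it converge. The ingredient you are missing is the \emph{conditioning on $\sigma_0<\tau_{2m}$}: what one must bound is $P_{m}(\text{quick drop to }m/2,\ \sigma_0<\tau_{2m})$, and the second constraint contributes an additional factor $P_{m/4}(\sigma_0<\tau_{2m})\lesssim 1/(k-\ell_0)$ via Lemma~\ref{main}. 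That extra $1/k$ is precisely what renders the paper's sum $\sum_k A_{n,k}B_k$ convergent. Neither Lemma~\ref{one} (an unconditional limit as the starting point tends to infinity, with no quantitative rate) nor the unspecified ``Markov restart'' supplies this factor. So the lower-tail part of the proposal, as written, does not close.
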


\begin{proof}[Proof of Lemma~\ref{elembp}]
  For every $x\in(0,1)$ 
  \[P_n(\max_{j<\sigma_0}V_j\ge n^{1/x})=P_n(\tau_{n^{1/x}}<\sigma_0).\] 
  The proof will be complete
  if we can show that the last probability converges to $x$ as
  $n\to\infty$. Fix a large enough $y\in\mathbb{N}$ to satisfy the
  conditions of Lemma \ref{main}. Then
  \begin{align*}
    P_n(\tau_{n^{1/x}}<\sigma_0)&= P_n(\tau_{n^{1/x}}<\sigma_y)+
    P_n(\tau_{n^{1/x}}<\sigma_0\,|\,\tau_{n^{1/x}}>\sigma_y)
    P_n(\tau_{n^{1/x}}>\sigma_y)\\&\le P_n(\tau_{n^{1/x}}<\sigma_y)+
    P_y(\tau_{n^{1/x}}<\sigma_0).
  \end{align*}
  By Lemma \ref{main}
  the first term in the right-hand side of the above inequality
  is bounded above by  $\ceil{\log_2 n}/\floor{x^{-1}\log_2 n}$ which converges to $x$
  as $n\to\infty$. By Corollary~\ref{uub}
\[P_y(\tau_{n^{1/x}}<\sigma_0)\le \frac{c_{10}(y)}{\floor{x^{-1}\ln n}}\to 0\ \ \text{as }n\to\infty.\] 
The lower bound is even easier. By  Remark~\ref{r}
\[\liminf_{n\to\infty}P_n(\tau_{n^{1/x}}<\sigma_0)\ge
  \liminf_{n\to\infty}P_n(\tau_{n^{1/x}}<\sigma_y)\ge x.\qedhere\]
\end{proof}

\begin{proof}[Proof of Lemma~\ref{areabp}] Fix $\epsilon\in (0,1)$,
  let $k_0=\floor{\log_2 n}$. 

  {\em Lower ``tail''.} To get a bound on the probability that the
  ratio in Lemma~\ref{areabp} is not less than $2-\epsilon$, we split
  the path space of the process $V$ according to its lifetime
  maximum. On each event $\tau_{2^k}<\sigma_0<\tau_{2^{k+1}}$, $k\ge
  k_0$, we shall take into account only the values of $V$ from the
  time $\tau_{2^k}$ up until the time
  $\sigma'_{2^{k-1}}:=\inf\{i>\tau_{2^k}:\,V_i\le 2^{k-1}\}$. On the
  time interval $\{i\in\mathbb{N}:\,\tau_{2^k}\le
  i<\sigma'_{2^{k-1}}\}$ the process $V$ stays above $2^{k-1}$ and
  below $2^{k+1}$. Thus,
\begin{align*}
  P_n&\left(\sum_{i=0}^{\sigma_0-1}V_i\le \left(\max_{i<\sigma_0}
      V_i\right)^{2-\epsilon}\right) \le \sum_{k=k_0}^\infty P_n
  \left(\sum_{i=\tau_{2^k}}^{\sigma'_{2^{k-1}}-1}V_i \le
    2^{(k+1)(2-\epsilon)},\tau_{2^k}<\sigma_0<\tau_{2^{k+1}}\right)\\
  &\le \sum_{k=k_0}^\infty P_n \left(2^{k-1}(\sigma'_{2^{k-1}}-\tau_{2^k})\le
    2^{(k+1)(2-\epsilon)},\tau_{2^k}<\sigma_0<\tau_{2^{k+1}}\right)\\
  &\le \sum_{k=k_0}^\infty
  E_n\left(\mathbbm{1}_{\{\tau_{2^k}<\sigma_0\}}P_n
    \left(\sigma'_{2^{k-1}}-\tau_{2^k}\le
      2^{k(1-\epsilon)+3},\sigma_0<\tau_{2^{k+1}}\Big|{\cal
        F}_{\tau_{2^k}}\right)\right)\\
  &\le \sum_{k=k_0}^\infty
  P_n(\tau_{2^k}<\sigma_0)P_{2^k}
    \left(\sigma'_{2^{k-1}}\le
      2^{k(1-\epsilon)+3},\sigma_0<\tau_{2^{k+1}}\right)= \sum_{k=k_0}^\infty A_{n,k}B_k,
\end{align*}
where $A_{n,k}=P_n(\tau_{2^k}<\sigma_0)$ and $B_k=P_{2^k}
\left(\sigma'_{2^{k-1}}\le
  2^{k(1-\epsilon)+3},\sigma_0<\tau_{2^{k+1}}\right)$, which we estimate separately.

Let $\ell_0<k_0$ be fixed as in  Lemma \ref{main}, 
$k_0$ be sufficiently large, and $k\ge k_0+2$ (for
$k=k_0,k_0+1$ we shall use the trivial bound $A_{n,k}\le 1$). Then
\begin{align*}
  A_{n,k}&=P_n(\tau_{2^k}<\sigma_{2^\ell})+P_n(\tau_{2^k}<\sigma_0\,|\sigma_{2^\ell}<\tau_{2^k})
  P_n(\sigma_{2^\ell}<\tau_{2^k})\\
  &\leq P_{2^{k_0+1}}(\tau_{2^k}<\sigma_{2^\ell})+P_{2^\ell}(\tau_{2^k}<\sigma_0)
  \overset{\text{L.\,\ref{main},\ L.\,\ref{hitn}}}{\le}
    \frac{k_0+1}{k}+\frac{C(\ell)}{k}.
\end{align*}
Fix an arbitrary $\nu>0$. If $k_0$ is large enough then for all $k\ge k_0$
\begin{align*}
  B_k&\le P_{2^k}(\sigma_{2^{k-1}}\le
  2^{k(1-\epsilon)+3},\sigma_0<\tau_{2^{k+1}},V_{\sigma_{2^{k-1}}}\ge
  2^{k-2})+
  P_{2^k}(\sigma_{2^{k-1}}<\tau_{2^{k+1}},V_{\sigma_{2^{k-1}}}<
  2^{k-2})\\ &\overset{{\rm (OS)}}{\le
  }E_{2^k}\left(\mathbbm{1}_{\{\sigma_{2^{k-1}}\le
      2^{k(1-\epsilon)+3}\}}P_{2^k}(\sigma_0<\tau_{2^{k+1}},V_{\sigma_{2^{k-1}}}\ge
    2^{k-2}|\,{\cal F}_{2^{k-1}})\right)+c_7\exp(-c_92^k)\\ &\le
  P_{2^k}(\sigma_{2^{k-1}}\le
  2^{k(1-\epsilon)+3})P_{2^{k-2}}(\sigma_{2^{\ell_0}}<\tau_{2^{k+1}})+
  c_7\exp(-c_92^k)\\ &\overset{\text{(DA),\,L.~\ref{main}}}{\le}
  \frac{\nu}{k-\ell_0}+ c_7\exp(-c_92^k).
\end{align*}
Substituting the estimates for $A_{n,k}$ and $B_k$ we conclude that
for all sufficiently large $n$ 
\[P_n\left(\sum_{i=0}^{\sigma_0-1}V_i\le \left(\max_{i<\sigma_0}
    V_i\right)^{2-\epsilon}\right)\le
3\nu+\nu(k_0+1+C(\ell_0))\sum_{k=k_0+2}^\infty
\frac{1}{k(k-\ell_0)}<C_1(\ell_0)\nu.\]

{\em Upper ``tail''.} To get a bound on the probability that the ratio
in Lemma~\ref{areabp} is at least $2+\epsilon$ we
let \[O_j:=\frac{1}{2^j}\sum_{i=0}^{\sigma_0-1}\mathbbm{1}_{\{2^j\le
  V_i<2^{j+1}\}},\ \ k^*:=\floor{\log_2 \max_{i<\sigma_0}V_i},\ \
m_k=\floor{2^{\epsilon k-2}},\] and use a crude ``union bound'':
\begin{align}\label{ut}
  P_n\left(\sum_{i=0}^{\sigma_0-1}V_i\ge \left(\max_{i<\sigma_0}
      V_i\right)^{2+\epsilon}\right)
  &\le P_n\left(\sum_{j=0}^{k^*}2^{2j+1}O_j\ge 2^{k^*(2+\epsilon)}\right)
    \nonumber\\ &\le P_n\left(\max_{0\le j\le k^*}O_j\ge m_{k^*}\right)\le
    \sum_{k=k_0}^\infty\sum_{j=0}^k P_n(O_j\ge m_k).
\end{align}
To estimate the (rescaled) time $O_j$ which the process $V$ spends in
the interval $[2^j,2^{j+1})$, $j\ge 0$, we define \[\rho_0^{(j)}:=\inf\{i\ge
0:\,V_i\in[2^j,2^{j+1})\},\quad \rho_m^{(j)}:=\inf\{i\ge
\rho_{m-1}^{(j)}+2^j:\,V_i\in[2^j,2^{j+1})\},\
m\in\mathbb{N}.\] Then by the strong Markov property
\begin{multline*}
  P_n(O_j\ge m_k)\le P_n(\rho_{m_k}^{(j)}<\sigma_0)\le
  P_n(\rho_{m_k}^{(j)}<\sigma_0\,|\,
  \rho_{m_k-1}^{(j)}<\sigma_0)P_n(\rho_{m_k-1}^{(j)}<\sigma_0)\\\le
  \left(\max_{2^j\le x<2^{j+1}}
    P_x(\rho_1^{(j)}<\sigma_0)\right)^{m_k}P_n(\rho_0^{(j)}<\sigma_0).
\end{multline*}
We notice that by (DA) there is a $c>0$ such that
$P_{2^{j+1}}(\sigma_{2^{j-1}}<2^{j-1})>c$ for all $j\ge 2$, and
choosing $\ell_0$ as in Lemma \ref{main} we get that if 
$(\ell_0+1)\wedge c_{10}(2^{\ell_0})<j\le k$ 
where $c_{10}$ is from Corollary~\ref{uub} then
\begin{align*}
  \max&_{2^j\le x<2^{j+1}} P_x(\rho_1^{(j)}<\sigma_0)\le
  1-\min_{2^j\le x<2^{j+1}} P_x(\rho_1^{(j)}>\sigma_0,
  \sigma_{2^{j-1}}<2^{j-1})\\ &\le 1-\min_{2^j\le x<2^{j+1}}
  P_x(\rho_1^{(j)}>\sigma_0\,|\,
  \sigma_{2^{j-1}}<2^{j-1})P_{2^{j+1}}(\sigma_{2^{j-1}}<2^{j-1})\\
  &\le 1-cP_{2^{j-1}} (\sigma_0<\tau_{2^j})\le 1-cP_{2^{j-1}}
  (\sigma_0<\tau_{2^j}, \sigma_{2^{\ell_0}}<\tau_{2^j})\\
  &\le 1-cP_{2^{\ell_0}}(\sigma_0<\tau_{2^j})P_{2^{j-1}}
  (\sigma_{2^{\ell_0}}<\tau_{2^j}) \overset{\rm Cor.\ref{uub}}{\underset{
      \rm Rem.\ref{r}}{\le}}1-c\left(1-
    \frac{c_{10}(\ell_0)}{j}\right)\frac{1}{j}\le 1-\frac{c'}{k}.
\end{align*}
Choosing $k_0$ large enough we can also ensure that for all $k\ge
k_0$ \[\max_{0\le j\le (\ell_0+1)\wedge c_{10}(\ell_0)}\max_{2^j\le x<2^{j+1}}
P_x(\rho_1^{(j)}<\sigma_0)\le 1-c'/k. \] Substituting these estimates in
(\ref{ut}) we conclude that 
\[P_n\left(\sum_{i=0}^{\sigma_0-1}V_i\ge
  \left(\max_{i<\sigma_0} V_i\right)^{2+\epsilon}\right)\le
\sum_{k=k_0}^\infty(k+1)\left(1-\frac{c'}{k}\right)^{m_k}\to 0\ \text{
  as } n\to\infty. \qedhere\]
\end{proof}

\subsection{Proofs of Lemmas~\ref{redu} and \ref{ud}}\label{456}
We shall need the following result.
  \begin{lemma}[(4.4) from Theorem~4.1 of \cite{KZ14}]
    \label{km}
   Let 
    $(Y^-(t)),\ t\ge 0$, be the solution
    of \[dY^-(t)=-dt+\sqrt{2Y^-(t)}\,dB(t),\quad Y^-(0)=1,\quad t\in[0,
    \tau_0].\] Then for every $h>0$ 
    \begin{equation}
   \label{EqInt} 
   \lim_{n\to\infty}P^{V^-}\left(\Sigma^-_n
         > hn^2\right)=
   P_1^{Y^-}\left(\int_0^{\tau_0}Y^-(s)\,ds> h\right).
    \end{equation}
  \end{lemma}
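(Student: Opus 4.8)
Since this statement is exactly \cite[(4.4), Theorem~4.1]{KZ14}, the shortest route is to cite it; but let me describe the argument I would give, which is a diffusion approximation in the spirit of Section~\ref{4.2}. Throughout, $V^-$ starts from $V^-_0=n$, $\sigma:=\sigma_0^{V^-}$ is its extinction time, and we use that $V^-_j=0$ for all $j\ge\sigma$, so $\Sigma^-_n=\sum_{j\ge 0}V^-_j$. Setting $\widetilde Y^n(t):=n^{-1}V^-_{\floor{nt}}$, one has the elementary identity
\[
n^{-2}\,\Sigma^-_n=\int_0^\infty \widetilde Y^n(t)\,dt,
\]
so that the left side of \eqref{EqInt} equals $P^{V^-}_n\bigl(\int_0^\infty\widetilde Y^n(t)\,dt>h\bigr)$. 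The plan is to prove $\int_0^\infty\widetilde Y^n(t)\,dt\Rightarrow\int_0^{\tau_0}Y^-(s)\,ds$ as $n\to\infty$, and then to note that $\int_0^{\tau_0}Y^-(s)\,ds$ has an absolutely continuous law (e.g.\ by Brownian scaling of $Y^-$, or from known formulas for additive functionals of squared Bessel processes), so the distribution functions converge at the point $h$ and \eqref{EqInt} follows.

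For the weak convergence I would first invoke the functional limit theorem for the left‑excursion branching process. Since the original walk has $\delta=1$, the process $V^-$ is, in the sense of Remark~\ref{sym}, the right‑excursion branching process attached to an environment with $\delta=-1$, and its diffusion approximation therefore has drift $-1$ in place of the $+1$ of Lemma~\ref{DA} (this is precisely the $-dt$ in the present lemma); concretely, for each fixed $\eps\in(0,1)$ one gets $t\mapsto n^{-1}V^-_{\floor{nt}\wedge\sigma^{V^-}_{\eps n}}\overset{J_1}{\Rightarrow}Y^-(\cdot\wedge\tau_\eps)$ with $\tau_\eps:=\inf\{t\ge0:Y^-(t)=\eps\}$, and, the drift being negative, $Y^-$ is absorbed at $0$ with $\tau_\eps\uparrow\tau_0<\infty$ a.s. The map sending a c\`adl\`ag path to its integral over $[0,T]$ is continuous at continuous paths, and $P(\sigma^{V^-}_{\eps n}>Tn)\to P(\tau_\eps>T)\to0$ as $T\to\infty$ controls the tail of the integral; hence the continuous mapping theorem gives
\[
X_{\eps,n}:=n^{-2}\sum_{j<\sigma^{V^-}_{\eps n}}V^-_j\ \Longrightarrow\ \int_0^{\tau_\eps}Y^-(s)\,ds\qquad(n\to\infty).
\]

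It then remains to pass from the level‑$\eps n$ cut‑off to full extinction and to send $\eps\downarrow0$. Write $n^{-2}\Sigma^-_n=X_{\eps,n}+n^{-2}R_{\eps,n}$ with $R_{\eps,n}=\sum_{j\ge\sigma^{V^-}_{\eps n}}V^-_j$. Since $V^-$ is a Markov chain (each generation uses a fresh i.i.d.\ stack) and $V^-_{\sigma^{V^-}_{\eps n}}\le\eps n$, monotone coupling in the starting population yields $R_{\eps,n}\preceq\Sigma^-_{\ceil{\eps n}}$; a second‑moment estimate on the total progeny (one checks $\IE[\Sigma^-_N]\asymp N^2$ and $\mathrm{Var}[\Sigma^-_N]=O(N^4)$, the exponents reflecting that a BP with constant drift $-1$ dies in time $\asymp N$ while staying of order $N$) shows $\Sigma^-_N/N^2$ is tight in $N$, so $\Sigma^-_{\ceil{\eps n}}=O_\IP\bigl((\eps n)^2\bigr)$ uniformly in large $n$ and $\limsup_n P(R_{\eps,n}/n^2>\eta)\to0$ as $\eps\downarrow0$ for every $\eta>0$. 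Combining this with $X_{\eps,n}\Rightarrow\int_0^{\tau_\eps}Y^-$ and $\int_0^{\tau_\eps}Y^-\uparrow\int_0^{\tau_0}Y^-$ a.s.\ (valid because $Y^-$ is bounded on $[0,\tau_0]$), a routine $\liminf/\limsup$ argument gives $n^{-2}\Sigma^-_n\Rightarrow\int_0^{\tau_0}Y^-(s)\,ds$, which completes the proof. The genuinely delicate step — the one I expect to require the most care — is exactly this last passage: near extinction the population is $O(1)$ and the diffusion scaling fails, so one must argue separately (cut at level $\eps n$, dominate the leftover progeny stochastically by $\Sigma^-_{\ceil{\eps n}}=O_\IP((\eps n)^2)$, then let $\eps\downarrow0$) that this regime contributes only $o(n^2)$ to the total progeny; this is precisely the mechanism already used in \cite{KZ14}.
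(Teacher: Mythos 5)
Lemma~\ref{km} is not proved in this paper at all — it is quoted verbatim from \cite[Theorem~4.1, (4.4)]{KZ14}, as the bracketed lemma title itself records — so there is no in-text argument to compare yours against. Your outline has the right shape (rescale, split the total progeny at the hitting time of level $\eps n$, invoke the diffusion approximation for the main piece, control the remainder, and use absolute continuity of the limit law to convert weak convergence into convergence of the specific tail probability), but the control of the remainder $R_{\eps,n}$ contains a genuine error.

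You assert that $\IE[\Sigma^-_N]\asymp N^2$ and $\mathrm{Var}[\Sigma^-_N]=O(N^4)$. Neither is true. The diffusion $Y^-$ (which, up to multiplication by $2$, is a squared Bessel process of dimension $-2$) has scale function $s(y)\propto y^2$ and speed density $m(y)\propto y^{-2}$, so the Green's function computation for $\IE_1\bigl[\int_0^{\tau_0}Y^-(s)\,ds\bigr]$ produces an integrand behaving like $z^{-1}$ for $z>1$: the mean is infinite. Equivalently, $\IP_1(\sup_{s\le\tau_0}Y^->M)\asymp M^{-2}$, and on that event $\int_0^{\tau_0}Y^-$ is of order $M^2$, which gives $\IP_1\bigl(\int_0^{\tau_0}Y^->L\bigr)\asymp L^{-1}$ — a tail too heavy for a finite first moment, let alone a second. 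The same heavy tail passes to the discrete process: $\IP\bigl(\Sigma^-_N>KN^2\bigr)\gtrsim K^{-1}$ uniformly in large $N$, so $\IE[\Sigma^-_N]=\infty$, and the Chebyshev step you rely on is unavailable. The tightness you actually need — $\sup_N\IP(\Sigma^-_N>KN^2)\to 0$ as $K\to\infty$ — is still true, but it must come from a scale-function bound on the probability of reaching level $\sqrt{K}N$ together with an estimate of the time spent at each dyadic scale; these are exactly the kinds of inputs this paper sets up in Lemma~\ref{main}, Lemma~\ref{LmInt}, and Corollary~\ref{uub} for the $\delta=1$ process $V^+$, and the cited Theorem 4.1 of \cite{KZ14} carries them out for $V^-$. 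The rest of your sketch — the identity $n^{-2}\Sigma^-_n=\int_0^\infty n^{-1}V^-_{\floor{nt}}\,dt$, the drift-$(-1)$ diffusion approximation, continuity of the integral functional, and absolute continuity of the limit law — is fine.
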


\begin{proof}[Proof of Lemma~\ref{redu}]
 Upper bound:
  \begin{align*}
    P_0(\Sigma^-_{d_n}\le n^x)&\le P_0(d_n\le n^{x/2}\ln n) +P_0(\Sigma^-_{d_n}\le
    n^x,d_n>n^{x/2}\ln n)\\&\le P_0(d_n\le n^{x/2}\ln
    n)+P_{\floor{n^{x/2}\ln n}}^V(\Sigma^-_{\floor{n^{x/2}\ln n}}\le n^x)\\
    &\overset{\eqref{EqInt}}{\le} P_0(d_n\le n^{x/2}\ln
    n)+\nu.
  \end{align*}
Lower bound:
\begin{align*}
  P_0(\Sigma^-_{d_n}\le n^x)&\ge P_0(d_n\le n^{x/2}/\ln
  n)-P_0(\Sigma^-_{d_n}>n^x,d_n\le n^{x/2}/\ln n)\\&\ge P_0(d_n\le
  n^{x/2}/\ln n)-P^V_{\floor{n^{x/2}/\ln n}}(2\Sigma^-_{\ceil{n^{x/2}\ln
      n}}>n^x)\\&\ge P_0(d_n\le n^{x/2}/\ln n)-\nu. \qedhere
\end{align*}
\end{proof}

\begin{proof}[Proof of Lemma~\ref{ud}]
  Let $L_n$ be the number of visits of $X$ to $0$ up to time $n$
  inclusively. Since $0\le L_n-(u_n+d_n)\le 1$ and the ERW with
  $\delta=1$ is recurrent, we have that $L_n-u_n-1\le d_n\le L_n-u_n$,
  $L_n\to\infty$ a.s., and both (\ref{ab}) and (\ref{du}) would follow
  if we show that
  \begin{equation}
    \label{dl}
    \frac{u_n}{L_n}\to \frac{1}{2}\ \ \text{as $n\to\infty$ a.s..} 
  \end{equation}
Notice that 
  \begin{equation}\label{est}
    \frac{\sum_{i=M+1}^{L_n}\eta_0(i)}{L_n}\le \frac{u_n}{L_n}\le 
\frac{M+\sum_{i=M+1}^{L_n}\eta_0(i)}{L_n}.
\end{equation}
As $L_n\to\infty$ a.s.\ as $n\to\infty$, the rightmost and leftmost
ratios in (\ref{est}) a.s.\ converge to $1/2$ by the strong law of
large numbers for Bernoulli trials.
\end{proof}

\subsection{Proof of Theorem \ref{main_res2}(b),(c)}
\begin{proof}[Proof of Theorem \ref{main_res2}(b)] 
Let $\tX_n$ denote the excited random walk in the cookie environment
obtained by removing all cookies from the positive semi-axis. The same
proof as for \cite[Theorem 1.1]{DK12} shows that
\begin{equation}
  \label{x0}
  \frac{\tX_{\floor{n\,\cdot}}}{\sqrt{n}}\overset{J_1}{\Rightarrow} W_{0,-1}. 
\end{equation}
Namely, we write $\tX_n=\tB_n+\tC_n$, where $\tB_0=\tC_0=0$ and
\[ \tB_{n+1}-\tB_n=\tX_{n+1}-\tX_n, \quad \tC_{n+1}-\tC_n=0 \]
if $\tX$ visited $\tX_n$ at least $M$ times before time $n$ and
\[ \tB_{n+1}-\tB_n=0, \quad 
\tC_{n+1}-\tC_n=\tX_{n+1}-\tX_n \]
otherwise. Then we can show that
\[ \left(\frac{\tB_{\floor{n\,\cdot}}}{\sqrt{n}},
  \frac{\tC_{\floor{n\,\cdot}}}{\sqrt{n}}\right)\overset{J_1}{\Rightarrow}
\left(B(\cdot), -\min_{s\leq \cdot} B(s)\right), \] and obtain
(\ref{x0}). We refer to \cite{DK12} for full details.
Since there is an obvious coupling such that $\tX_{T^-_k}=X_{T^-_k}$, $k\ge 0$,
the result follows from Theorem \ref{PrPBMPOsNeg}(b) and the
continuity of the map $\psi$ defined in (\ref{psi}).
\end{proof}

\begin{proof}[Proof of Theorem \ref{main_res2}(c)] This result admits
  the same proof as the one for Corollary~\ref{CrPBMPOsNeg}(b) but,
  since $A^+_n/n\to 1$ for $\delta=1$, we can give a simpler derivation.
  
  Without loss of generality we show the convergence on
  $D([0,1])$. Write
\[ X_{T^+_m}=X_m+(X_{T^+_m}-X_m) . \]
By Lemma \ref{time_pos} for each $\epsilon, \nu>0$ and all large $n$
\[ P\left(\max_{m\leq n} (T^+_m-m)\ge \epsilon n\right)\leq \nu. \]
 On the other hand, given arbitrary positive
$\epsilon$ and $\nu$ we can choose $\lambda>0$ so that
\[ P\left(\sup_{0\le s\le t\le s+\lambda\le 1+\lambda}
  (B^*(t)-B^*(s))>\epsilon\right)\leq \nu. \] The above inequalities
and Theorem \ref{ThERWCrit} imply that for any fixed $\epsilon,\nu>0$ and all sufficiently large $n$
\[ P\left(\max_{m\le n}|X_{T^+_m}-X_m|>\epsilon \sqrt{n}\ln n
\right)<\nu. \] 
Theorem~\ref{ThERWCrit} and the ``convergence together'' theorem
\cite[Theorem 3.1]{B99} imply the desired result. 
\end{proof}

\appendix

\section{Proofs of Lemmas~\ref{hitn} - \ref{one}}
The following lemma plays an important role in proofs of
Lemmas~\ref{hitn} and \ref{delay}.
\begin{lemma}[Main lemma] \label{main}
Let  
\[h^\pm(n):=n\pm\frac1n\qquad\mbox{for all $n\in\mathbb{N}$}.
\]
Then there is $\ell_0\in\mathbb{N}$ such that if
$\ell,m,u,x\in\mathbb{N}$ satisfy $\ell_0\le \ell<m<u$ and $|x-2^m|\le
2^{2m/3}$ then
\begin{equation}\label{ip}
\frac{h^-(u)-h^-(m)}{h^-(u)-h^-(\ell)}\le P_x[\sigma_{2^\ell}<\tau_{2^u}]\le
\frac{h^+(u)-h^+(m)}{h^+(u)-h^+(\ell)}.
\end{equation}
\end{lemma}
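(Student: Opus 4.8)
The plan is to reduce the estimate for the branching process $V$ to the corresponding estimate for a genuine critical Galton--Watson process with mean-$1$ geometric offspring law, for which the probability of reaching a high level $2^u$ before dropping to a low level $2^\ell$ can be computed essentially exactly. Recall that if there were no biased cookies, $V$ would be exactly such a Galton--Watson process, call it $\widehat V$, and for $\widehat V$ the function $n\mapsto n$ is harmonic (the extinction probability from $n$ particles, restricted to a finite window, is governed by the martingale $\widehat V_n$). Hence for integers $\ell<m<u$ and starting value exactly $2^m$ one gets the clean gambler's-ruin identity $P_{2^m}[\sigma_{2^\ell}<\tau_{2^u}] = (2^u-2^m)/(2^u-2^\ell)$; more precisely, stopping the martingale and using that overshoot past $2^u$ and below $2^\ell$ is of smaller order, one gets this identity up to a multiplicative $(1+o(1))$ as $\ell\to\infty$. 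The functions $h^\pm(n)=n\pm 1/n$ are designed precisely to absorb this $o(1)$ correction together with the boundary-overshoot corrections: for $\ell$ large, $(h^-(u)-h^-(m))/(h^-(u)-h^-(\ell))$ and $(h^+(u)-h^+(m))/(h^+(u)-h^+(\ell))$ sandwich $(2^u-2^m)/(2^u-2^\ell)(1+o(1))$, and, crucially, they also accommodate the perturbation coming from the at-most-$M$ biased coins per generation.

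The steps, in order: \textbf{(1)} Identify the martingale/harmonic structure. For the unbiased process $\widehat V$, show $\widehat V_{n\wedge\sigma_0}$ is a martingale, hence so is $\widehat V_{n\wedge\sigma_{2^\ell}\wedge\tau_{2^u}}$; apply optional stopping between the levels $2^\ell$ and $2^u$. \textbf{(2)} Control the overshoot at both barriers using (OS) (Lemma~\ref{OS}): at the upper barrier $V_{\tau_{2^u}}=2^u+O(2^{u/2}\,\text{polylog})$ with overwhelming probability, and at the lower barrier the undershoot below $2^\ell$ is likewise controlled, so replacing the stopped values by the exact barrier levels costs only a relative error that is $o(1/\ell)$ — small enough to be swallowed by the gap between $h^-$ and $h^+$. \textbf{(3)} Handle the bias. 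Since at most $M$ coins per generation are biased and $|\delta|=1$, the drift introduced is bounded and localized near small population sizes; compare $V$ to $\widehat V$ by a coupling (both built from the same $\eta_x(i)$) and show the discrepancy in the exit probability, starting from $|x-2^m|\le 2^{2m/3}$ with $\ell\ge\ell_0$, is absorbed into the $h^\pm$ margin. The hypothesis $|x-2^m|\le 2^{2m/3}$ lets one start the martingale estimate from a value within $2^{2m/3}=o(2^m)$ of $2^m$, which only perturbs the numerator $2^u-x$ by a lower-order term. \textbf{(4)} Assemble: combine (1)--(3) to get $P_x[\sigma_{2^\ell}<\tau_{2^u}]=\dfrac{2^u-2^m}{2^u-2^\ell}\bigl(1+O(2^{-\ell/3})\bigr)$, say, and then verify the elementary inequality that for $\ell\ge\ell_0$ this quantity lies between the two $h^\pm$-ratios. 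This last verification is a one-line computation once one notes $h^\pm(n)=n(1\pm n^{-2})$ and that $n^{-2}\le 2^{-2\ell}$ for $n\ge 2^\ell$... actually since the arguments are $\ell,m,u$ themselves (not $2^\ell$ etc.), one has $h^\pm(\ell)-\ell=\pm 1/\ell$, so the $h^\pm$ margin is of order $1/\ell$, comfortably larger than the $O(2^{-\ell/3})$ error — choose $\ell_0$ accordingly.

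The main obstacle I expect is \textbf{Step (3)}, controlling the effect of the biased coins on the exit probability uniformly over the (exponentially large) window between $2^\ell$ and $2^u$. The subtlety is that the process is critical, so even an $O(1)$ net drift per excursion through small population sizes could in principle accumulate over the many returns to the vicinity of $2^\ell$ during the excursion from $2^m$; one must argue that the number of such returns before either hitting $2^u$ or dropping below $2^\ell$ is not too large, or more cleanly, that the bias only acts on generations where the population is $\le M$, which is far below $2^\ell$, so that it is invisible to the event $\{\sigma_{2^\ell}<\tau_{2^u}\}$ entirely — reducing the whole problem to the unbiased computation plus overshoot control. Making that "invisibility" rigorous (the bias lives strictly below the lower barrier $2^\ell>2^{\ell_0}\gg M$, hence does not affect the walk before $\sigma_{2^\ell}$) is the crux, and once it is in place the remaining estimates are the standard martingale-plus-overshoot arguments of Steps (1)--(2) and (4).
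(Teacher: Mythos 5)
Your Step~(3) contains a fundamental misunderstanding of the model, and it leads the whole argument to a wrong answer. You claim that the bias ``only acts on generations where the population is $\le M$'' and is therefore ``invisible'' above the lower barrier $2^\ell$. That is not how the process is built. Look at the definition: $V_n = S_n(V_{n-1})$, where $S_x(m)$ counts successes before the $m$-th failure in the sequence $\eta_x(1),\eta_x(2),\dots$. \emph{Every} generation uses the coins $\eta_n(1),\dots,\eta_n(M),\dots$, and the first $M$ of them are biased regardless of how large $V_{n-1}$ is. The hypothesis $\delta=1$ means precisely that this produces a persistent drift of about $+1$ per generation at \emph{all} population levels, not just near zero. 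This drift is the whole reason the diffusion limit in Lemma~\ref{DA} is $dY = dt + \sqrt{2Y}\,dB$ (a squared Bessel process of dimension $2$, up to a factor) rather than the driftless $dY = \sqrt{2Y}\,dB$ you'd get from a critical Galton--Watson process.

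Because of that drift, the relevant harmonic function on the bulk is $\ln x$, not $x$ (cf.\ Lemma~\ref{elem} in the paper, which uses that $\ln Y(t)$ is a local martingale, giving $P_y(\tau_R<\tau_1)=\ln y/\ln R$). Consequently the correct order of magnitude for $P_x[\sigma_{2^\ell}<\tau_{2^u}]$ is $(u-m)/(u-\ell)$, which is what the sandwich by $h^\pm$ encodes (since $h^\pm(n)=n\pm 1/n\approx n$). Your Step~(4) asserts instead that $P_x[\sigma_{2^\ell}<\tau_{2^u}]=\frac{2^u-2^m}{2^u-2^\ell}\bigl(1+O(2^{-\ell/3})\bigr)$ and claims this lies between the two $h^\pm$-ratios, but that is false: for, say, $\ell=10$, $m=20$, $u=30$, your expression is $\approx 1-2^{-10}\approx 0.999$, whereas $(u-m)/(u-\ell)=1/2$. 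So the ``one-line verification'' at the end cannot go through, and the error is not a technicality but a wrong choice of scale function caused by discarding the drift. The paper's route avoids this by importing the scale-function argument of \cite[Lemma 5.3]{KM} with $s(x)=\ln x$ (on a logarithmic/dyadic scale), together with a dyadic-strip version of overshoot control \eqref{strip}--\eqref{down}; the linear-martingale-plus-overshoot plan you propose would be appropriate only for $\delta=0$.
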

\begin{remark}\label{r}
  A little algebra shows that the lower bound is at least $1-m/u$.
\end{remark} 
The proof of Lemma~\ref{main} is the same as that of Lemma
5.3 in \cite{KM} where we take $a=2$, $h^\pm_a(n)=n\pm 1/n$, and use
the following result instead of \cite[Lemma~5.2]{KM}.
\begin{lemma}
  Consider the process $V$ with $|V_0-2^n|\le 2^{2n/3}$ and let
  $T:=\inf\{k\ge 0:\, V_k\not\in(2^{n-1},2^{n+1})\}$. Then for all
  sufficiently large $n$
  \begin{align}
    \label{strip} P(\mathrm{dist}(V_T,(2^{n-1},2^{n+1}))\ge
    2^{2(n-1)/3})&\le \exp(-2^{n/4});\\ \label{down} \left|P(V_T\le
      2^{n-1})-\frac12\right| &\le 2^{-n/4}.
  \end{align}
\end{lemma}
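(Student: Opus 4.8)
The plan is to analyze the process $V$ near the level $2^n$ by the diffusion approximation (DA) combined with a martingale argument, exactly parallel to the proof of \cite[Lemma~5.2]{KM}. First I would observe that both assertions concern the behavior of $V$ started near $2^n$ and run only until it first exits the dyadic annulus $(2^{n-1},2^{n+1})$, so the relevant time scale is $O(2^n)$ and the spatial scale is $O(2^n)$; rescaling space and time by $2^n$ brings us into the regime covered by Lemma~\ref{DA}, with the limiting diffusion $dY=dt+\sqrt{2Y}\,dB$ started at $Y(0)=1$ (since $|V_0-2^n|\le 2^{2n/3}=o(2^n)$, the rescaled initial condition converges to $1$) and stopped on exiting $(1/2,2)$.

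For the concentration estimate \eqref{strip}, the point is that the overshoot of $V$ past the boundary of the annulus is small: each generation the offspring distribution has exponential tails (geometric-type, perturbed by at most $M$ biased coins), so a one-step large-deviation bound shows $V$ cannot jump a distance $2^{2(n-1)/3}$ in a single step with probability more than $\exp(-c\,2^{2n/3}/2^n\cdot 2^{2n/3})$-type quantities; summing over the at most $O(2^n)$ generations before time $T$ (or, cleaner, using that $T\le$ some polynomial multiple of $2^n$ with overwhelming probability, which itself follows from (DA) or a crude drift estimate) yields the bound $\exp(-2^{n/4})$ with room to spare. Alternatively one can invoke (OS), Lemma~\ref{OS}, directly: the overshoot at $\tau_{2^{n+1}}$ is controlled by $c_7(e^{-c_8 y^2/2^{n+1}}+e^{-c_8 y})$, and the undershoot at $\sigma_{2^{n-1}}$ by $c_7 e^{-c_8 y^2/2^{n-1}}$; taking $y=2^{2(n-1)/3}$ makes both terms $\le \exp(-2^{n/4})$ for large $n$. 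This is the easier of the two claims.

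For the ``fair coin'' estimate \eqref{down}, the key input is that $\ln V_k$ is approximately a martingale: since the offspring law of $V$ has mean essentially $1$ (exactly $1$ for the unperturbed Galton--Watson part, with an $O(1/V_k)$ correction from the at most $M$ biased coins and from the discreteness of the geometric offspring law), a Taylor expansion gives $\IE[\ln V_{k+1}\mid \mathcal F_k]=\ln V_k+O(1/V_k)$ uniformly while $V_k\in(2^{n-1},2^{n+1})$. Hence $\ln V_{k\wedge T}$ is a martingale up to an accumulated error of order $\sum 1/V_k\le T/2^{n-1}$, which is $O(1)$ in expectation but needs to be shown $o(n)$ — here I would use that $T=O(2^n\log 2^n)$ with high probability (from (DA), since the limiting diffusion exits $(1/2,2)$ in finite time a.s., or from a direct supermartingale/Lyapunov argument on $V_k^2$), so the error term is $O(\log 2^n)=O(n)$, which unfortunately is the same order as the gap between $\ln 2^{n-1}$ and $\ln 2^{n+1}$, namely $2\ln 2$. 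To get the sharp constant I would instead control the error more carefully: the martingale $\ln V_{k\wedge T}$ has quadratic variation of order $\sum \mathrm{Var}(\ln V_{k+1}\mid\mathcal F_k)\asymp \sum 1/V_k^2\cdot V_k \asymp T/2^n = O(1)$ after rescaling (each step changes $\ln V$ by $O(V_k^{-1/2})$ since the offspring fluctuation is $O(\sqrt{V_k})$), wait — that is $O(1)$, bounded; so optional stopping plus a uniform-integrability check gives $\IE[\ln V_T]=\ln V_0+o(1)=(n)\ln 2+o(1)$, and since $V_T$ is concentrated near $2^{n-1}$ or $2^{n+1}$ by \eqref{strip}, writing $p=P(V_T\le 2^{n-1})$ we get $(n)\ln 2 + o(1)= p(n-1)\ln 2+(1-p)(n+1)\ln 2 + o(1)$, i.e. $p=1/2+o(1)$, which is weaker than the claimed $2^{-n/4}$. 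The sharp rate $2^{-n/4}$ (or any polynomial rate — note the subsequent application in Lemma~\ref{main} only uses it to control ratios like $h^\pm(u)-h^\pm(m)$ over $h^\pm$ differing by $1/n$) should follow by iterating the annulus estimate across $\lfloor n/\text{const}\rfloor$ nested dyadic scales, or more directly by a second-moment/Berry--Esseen refinement of the optional stopping argument tracking the $O(1)$-size correction to $\IE[\ln V_T]$; this refinement — getting an \emph{exponentially} small rather than merely $o(1)$ error in the fair-coin estimate — is the main obstacle, and is precisely the technical heart borrowed from the proof of \cite[Lemma~5.2]{KM}, so I would follow that argument, substituting $a=2$ and our offspring law, and verify that the perturbation by $M$ bounded cookies changes none of the estimates beyond harmless constants.
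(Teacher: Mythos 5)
Your structure is right — (OS) for the overshoot bound \eqref{strip}, and an optional-stopping argument with the approximate martingale $\ln V_{k\wedge T}$ for the fair-coin estimate \eqref{down} — and your reduction to \cite[Lemma~5.2]{KM} with the scale function $\ln x$ is exactly what the paper does. However, you misestimate the per-step martingale defect, and this leads you to believe you need extra machinery (nested dyadic scales, Berry--Esseen) to get a polynomial rate when in fact the direct argument already gives it.

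Concretely, writing $m=V_k$ and $W=V_{k+1}-m$, you claim
$\IE[\ln V_{k+1}\mid\mathcal F_k]=\ln V_k+O(1/V_k)$. But $\IE[W\mid\mathcal F_k]=\delta+O(e^{-cm})=1+O(e^{-cm})$ and $\IE[W^2\mid\mathcal F_k]=2m+O(1)$, so in the Taylor expansion
\[
\IE\bigl[\ln(1+W/m)\mid\mathcal F_k\bigr]=\frac{\IE[W]}{m}-\frac{\IE[W^2]}{2m^2}+\frac{\IE[W^3]}{3m^3}-\cdots
=\frac{1}{m}-\frac{1}{m}+O\!\left(\frac{1}{m^2}\right),
\]
the two $1/m$ terms cancel \emph{precisely because $\delta=1$}. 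This cancellation is not incidental; it is the statement that $\ln x$ is the scale function of the limiting diffusion $dY=dt+\sqrt{2Y}\,dB$ (for general $\delta$ the analogous cancellation picks out $s(x)=x^{1-\delta}$, which is what \cite[Lemma~5.2]{KM} uses for $\delta>1$). With the correct $O(V_k^{-2})$ defect, the accumulated error is $O(T\cdot 2^{-2n})=O(2^{-n})$ since $\IE[T]=O(2^n)$, and combining with $|\ln V_0 - n\ln 2|=O(2^{-n/3})$ and the fact that, on the good event of \eqref{strip}, $\ln V_T=(n\pm1)\ln 2+O(2^{-n/3})$, optional stopping gives $|P(V_T\le 2^{n-1})-\tfrac12|=O(2^{-n/3})$ directly — stronger than the claimed $2^{-n/4}$, with no iteration over scales and no Berry--Esseen. (One does need a routine truncation of $W$, and to note that $P(V_{k+1}=0\mid V_k=m)$ is doubly exponentially small, to justify the Taylor expansion and the use of $\ln$, but that is standard.) So the ``main obstacle'' you identify at the end is illusory; the gap in your analysis is the missed cancellation, not an unavailability of the right rate.
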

The proof of the above lemma repeats the one of \cite[Lemma 5.2]{KM} where
we use our process $V$, set $a=2$, and $s(x)=\ln x$ on $(3^{-1},
3)$. 

\begin{corollary}\label{uub}
  For every $y\in \mathbb{N}$ there is a constant $c_{10}(y)$ such that
  for every $n\in\mathbb{N}$
  \[(\ln n)P_y(\sigma_0>\tau_n)\le c_{10}(y).\]
\end{corollary}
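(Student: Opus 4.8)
The plan is to derive the bound from the Main Lemma (Lemma~\ref{main}) by a geometric-scale comparison, in exactly the spirit of the arguments for Lemmas~\ref{hitn} and \ref{elembp}. First I would reduce the statement to powers of $2$: it suffices to bound $(\ln 2^K)\,P_y(\sigma_0>\tau_{2^K})$ uniformly in $K\in\mathbb{N}$, since for general $n$ we pick $K$ with $2^K\le n<2^{K+1}$ and use monotonicity of $\tau_n$ in $n$ together with $\ln n\le (K+1)\ln 2$. So the task is to show $P_y(\sigma_0>\tau_{2^K})=P_y(\tau_{2^K}<\sigma_0)\le c_{10}(y)/K$ for all large $K$.

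Next I would fix $\ell_0$ as in Lemma~\ref{main} and split according to whether the process, started from $y$, reaches a moderate level $2^{\ell_0}$ before extinction. On the event $\{\tau_{2^K}<\sigma_0\}$, if $K>\ell_0$, the process must pass through the window $(2^{\ell_0},2^{\ell_0+1})$ at some point, so by the strong Markov property at the first such time and monotonicity in the starting point,
\[
P_y(\tau_{2^K}<\sigma_0)\le \max_{2^{\ell_0}\le x<2^{\ell_0+1}} P_x(\tau_{2^K}<\sigma_0)\le \max_{2^{\ell_0}\le x<2^{\ell_0+1}}\bigl(1-P_x[\sigma_{2^{\ell_0}}<\tau_{2^K}]\bigr).
\]
Hmm — to invoke Lemma~\ref{main} I need the starting point to satisfy $|x-2^m|\le 2^{2m/3}$; taking $m=\ell_0$ forces $x=2^{\ell_0}$ exactly unless $\ell_0$ is large, so it is cleaner to instead apply Lemma~\ref{main} with a \emph{larger} anchor: by Lemma~\ref{hitn} (or directly by the diffusion approximation, Lemma~\ref{DA}) there is $C(y)<\infty$ with $P_y(\tau_{2^m}<\sigma_0)\le C(y)$ for every $m$, so for a suitably large fixed $m_0>\ell_0$ and on $\{\tau_{2^{m_0}}<\sigma_0\}$ the process enters the window $(2^{m_0},2^{m_0+1})$, where Lemma~\ref{main} applies with $\ell=\ell_0$, $m=m_0$, $u=K$. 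Remark~\ref{r} then gives $P_x[\sigma_{2^{\ell_0}}<\tau_{2^K}]\ge 1-m_0/K$, hence $P_x(\sigma_0<\tau_{2^K})\ge P_x(\sigma_{2^{\ell_0}}<\tau_{2^K})\cdot P_{2^{\ell_0}}(\sigma_0<\tau_{2^K})$, and it remains to bound $P_{2^{\ell_0}}(\sigma_0<\tau_{2^K})$ from below by a constant (uniformly in $K$). That last bound holds because from the bounded level $2^{\ell_0}$ the BP reaches $0$ before climbing to $2^K$ with probability bounded below — e.g. it goes extinct in finitely many steps with positive probability not depending on $K$, using (WEL) to guarantee genuine negative drift contributions, or again Lemma~\ref{hitn}/Lemma~\ref{DA} applied to $P_{2^{\ell_0}}(\tau_{2^K}<\sigma_0)\to 0$. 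Combining, $P_y(\tau_{2^K}<\sigma_0)\le (1-\text{const}\cdot(1-m_0/K))+ (1-P_{2^{m_0}}(\tau_{2^K}<\sigma_0))$ — so I instead track the complementary event and get $P_y(\tau_{2^K}<\sigma_0)\le C(y)\cdot\frac{m_0}{K}=:c_{10}(y)/K$ after collecting the constants.

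The main obstacle is bookkeeping the three scales $\ell_0<m_0<K$ so that the hypotheses $|x-2^{m}|\le 2^{2m/3}$ and $\ell_0\le\ell<m<u$ of Lemma~\ref{main} are simultaneously met while the overshoot at level $2^{m_0}$ is controlled; this is handled by choosing $m_0$ large (depending only on $\ell_0$, hence on nothing) and absorbing the overshoot via Lemma~\ref{OS}, exactly as in the proof of Lemma~\ref{areabp}. Everything else is a routine union/Markov-property argument, and the $1/K$ decay is precisely the content of Remark~\ref{r}.
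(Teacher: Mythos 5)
Your ingredients are the right ones (Lemma~\ref{main} at a geometric scale, overshoot control via Lemma~\ref{OS}, and a uniform lower bound on the extinction probability from a bounded level), and the reduction to powers of $2$ is fine, but the ``combining'' step does not produce the $1/K$ decay. From your estimate $P_x(\sigma_0<\tau_{2^K})\ge P_x(\sigma_{2^{\ell_0}}<\tau_{2^K})\cdot P_{2^{\ell_0}}(\sigma_0<\tau_{2^K})\ge (1-m_0/K)\,p_0$ you can only conclude $P_x(\tau_{2^K}<\sigma_0)\le 1-p_0+p_0 m_0/K$, which stalls at the positive constant $1-p_0$ as $K\to\infty$ and is nowhere near $O(1/K)$. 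The issue is that reaching $\sigma_{2^{\ell_0}}$ before $\tau_{2^K}$ does not prevent the process from bouncing back up to $2^K$ later; a single pass through the window is not enough, and the displayed ``Combining, \dots'' line (as well as the remark ``I instead track the complementary event'') does not contain a valid argument. What is missing is the iteration: let $R_0<R_1<\cdots$ be the successive re-entry times of $V$ into $[2^{m_0},\infty)$ from $[0,2^{\ell_0}]$. Each time the process is near $2^{m_0}$ (after paying an exponentially small overshoot penalty via Lemma~\ref{OS}), Lemma~\ref{main} and Remark~\ref{r} give that it reaches $2^K$ before dropping below $2^{\ell_0}$ with probability $\lesssim m_0/K$; and each time it is at or below $2^{\ell_0}$, it becomes extinct before returning to $2^{m_0}$ with probability at least $p_0:=\min_{z\le 2^{\ell_0}}P_z(V_1=0)>0$ (this uses $(\mathrm{WEL})$). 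Hence $P_y(R_k<\sigma_0)\le (1-p_0)^k$, the number of excursions is stochastically geometric, and summing the series gives $P_y(\tau_{2^K}<\sigma_0)\le \sum_{k\ge 0}(1-p_0)^k\bigl(m_0/K+\text{overshoot term}\bigr)\le C m_0/(p_0 K)$. Without this renewal structure the bound you wrote is not a proof.

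Two further points. First, you invoke Lemma~\ref{hitn} twice (``by Lemma~\ref{hitn}\ \dots there is $C(y)<\infty$\dots'' and again for $P_{2^{\ell_0}}(\tau_{2^K}<\sigma_0)\to 0$), but the proof of Lemma~\ref{hitn} in this paper explicitly \emph{uses} Corollary~\ref{uub}, so any such appeal is circular; the only non-circular route to the uniform extinction bound from a bounded level is the direct $(\mathrm{WEL})$ argument you mention (or an explicit two-scale application of Lemma~\ref{main}). Second, Lemma~\ref{main} requires $|x-2^{m_0}|\le 2^{2m_0/3}$, which is a much narrower window than the $[2^{m_0},2^{m_0+1})$ you work with, so the overshoot argument is not optional: you must, at each re-entry time $R_k$, split on whether $V_{R_k}$ falls in the admissible window or overshoots, and control the latter with Lemma~\ref{OS}. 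For comparison, the paper's own treatment is by reference to (5.4) in the Kosygina--Mountford paper, whose proof is exactly the excursion-counting/geometric-series argument sketched above with Lemma~\ref{main} playing the role of their Lemma~5.3.
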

The proof of this corollary is the same as that of (5.4) in \cite{KM}
and uses Lemma~\ref{main} instead of Lemma 5.3 of \cite{KM}.

\begin{corollary} \label{rc} Let $\delta=1$. Then
  $P^V_y(\sigma_0^V<\infty)=1$ for every $y\in\mathbb{N}$.
  \end{corollary}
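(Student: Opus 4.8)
The plan is to use the one thing that distinguishes $V$ from the ERW — namely that under the averaged measure $V$ is a genuine time-homogeneous Markov chain — and to combine an elementary exit-time estimate with Corollary~\ref{uub}. Observe first that under $P^V_y$ the sequence $(V_n)_{n\ge0}$ is a time-homogeneous Markov chain on $\{0,1,2,\dots\}$ for which $0$ is absorbing: by (\ref{V}) the passage from $V_{n-1}$ to $V_n=S_n(V_{n-1})$ is governed only by the cookie stack at site $n$, these stacks are i.i.d.\ by (IID), and each of them is consulted exactly once along a trajectory of $V$. Moreover, from any state $m\ge1$ the chain jumps to $0$ in a single step with positive probability, since
\[
P^V_m(V_1=0)=\IE\Big[\prod_{i=1}^{m}\big(1-\omega_0(i)\big)\Big]>0
\]
by (WEL): the event $\{\omega_0(i)<1\text{ for all }i\in\IN\}$ has positive $\IP$-probability and on it the product is strictly positive.

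Next I would rule out survival at bounded height. Fix $n>y$ and set $\epsilon_n:=\min_{1\le m<n}P^V_m(V_1=0)>0$. On the event $\{\sigma_0\wedge\tau_n>j\}$ one has $V_0,V_1,\dots,V_j\in\{1,\dots,n-1\}$, so applying the Markov property at time $j-1$ yields $P^V_y(\sigma_0\wedge\tau_n>j)\le(1-\epsilon_n)\,P^V_y(\sigma_0\wedge\tau_n>j-1)$, hence $P^V_y(\sigma_0\wedge\tau_n>j)\le(1-\epsilon_n)^{j}\to0$ as $j\to\infty$. Thus $P^V_y(\sigma_0\wedge\tau_n=\infty)=0$ for every $n>y$; that is, almost surely $V$ either becomes extinct or reaches level $n$.

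To conclude, note that for every $n>y$,
\[
\{\sigma_0=\infty\}\subseteq\{\sigma_0>\tau_n\}\cup\{\sigma_0\wedge\tau_n=\infty\},
\]
because on $\{\sigma_0=\infty\}$ either $\tau_n<\infty$, in which case $\tau_n<\sigma_0$, or $\tau_n=\infty$, in which case $\sigma_0\wedge\tau_n=\infty$. Combining this with the previous step and Corollary~\ref{uub} gives
\[
P^V_y(\sigma_0=\infty)\le P^V_y(\sigma_0>\tau_n)+P^V_y(\sigma_0\wedge\tau_n=\infty)=P^V_y(\sigma_0>\tau_n)\le\frac{c_{10}(y)}{\ln n},
\]
and letting $n\to\infty$ yields $P^V_y(\sigma_0^V=\infty)=0$, which is the claim.

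There is no genuine obstacle here; the only point worth a moment's care is the (otherwise routine) verification that $V$ is a time-homogeneous Markov chain under the averaged measure. This is false for the ERW, since the walk revisits sites, but it holds for $V$ because each cookie stack enters the recursion (\ref{V}) at most once — and it is precisely what legitimizes the geometric estimate in the second paragraph. I would deliberately avoid citing Theorem~\ref{tbp} or Lemma~\ref{hitn} here, relying only on Corollary~\ref{uub}, to keep the argument independent of the results whose proofs may in turn invoke this corollary.
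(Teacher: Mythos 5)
Your proof is correct and takes essentially the same route as the paper: both reduce to the decomposition of $\{\sigma_0=\infty\}$ according to whether $\tau_n<\infty$, then invoke Corollary~\ref{uub} and let $n\to\infty$. The only difference is that the paper simply states the fact $P_y(\sigma_0=\infty,\tau_n=\infty)=0$ without proof, whereas you supply a (correct) geometric-tail argument for it via the Markov property and (WEL).
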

  \begin{proof}
By Corollary~\ref{uub} and the fact that
    $P_y(\sigma_0=\infty,\tau_n=\infty)=0$ for $n>y$,
\[      P_y(\sigma_0=\infty)=P_y(\sigma_0=\infty,\tau_n<\infty)
\le P_y(\sigma_0>\tau_n)\le \frac{c_{10}(y)}{\ln n}\to 0\quad \text{as }n\to\infty. \qedhere \]
  \end{proof}
  \begin{remark}\label{rt}
    Corollary~\ref{rc}, the first statement of
    \cite[Corollary~7.9]{KZ14}, and symmetry imply that ERW with
    $|\delta|=1$ is recurrent without using any results from the
    literature on branching processes. A direct proof of recurrence
    and transience results for $|\delta|\ne 1$ was obtained in
    \cite[Corollary 7.10]{KZ14}.
  \end{remark}

\begin{proof}[Proof of Lemma~\ref{hitn}] For every $n>2$ there is an
  $m\in\mathbb{N}$ such that $2^m\le n<2^{m+1}$ and for this $m$ \[(\ln 2^m)
  P_y(\sigma_0>\tau_{2^{m+1}})\le (\ln n) P_y(\sigma_0>\tau_n)\le (\ln
  2^{m+1}) P_y(\sigma_0>\tau_{2^m}).\] If we can show the existence of
  \begin{equation}
    \label{g}
    g(y):=\lim_{m\to\infty} mP_y(\sigma_0>\tau_{2^m})\in(0,\infty),
  \end{equation}
  then we get
  \begin{align*}
    &\limsup_{n\to\infty}(\ln n)P_y(\sigma_0>\tau_n)\le \ln 2\lim_{m\to\infty}(m+1) P_y( \sigma_0>\tau_{2^m})=(\ln 2) g(y)\\
    &=\ln 2\lim_{m\to\infty}mP_y(
    \sigma_0>\tau_{2^{m+1}})\le \liminf_{n\to\infty}(\ln
    n)P_y(\sigma_0>\tau_n),
  \end{align*}
  and the desired statement follows. Therefore, we shall show
  (\ref{g}).  Let $\ell=(\floor{\log_2 y}+1)\vee \ell_0$, where
  $\ell_0$ is the same as in Lemma~\ref{main}. Then
  \begin{multline*}
    mP_y(\sigma_0>\tau_{2^m})=m\left[\prod_{j=\ell+1}^m
    P_y(\sigma_0>\tau_{2^j}\,|\,\sigma_0>\tau_{2^{j-1}})\right]P_y(\sigma_0>\tau_{2^\ell})\\
    =\ell
    P_y(\sigma_0>\tau_{2^\ell})\left[\prod_{j=\ell+1}^m \frac{j}{j-1}
    P_y(\sigma_0>\tau_{2^j}\,|\,\sigma_0>\tau_{2^{j-1}})\right].
  \end{multline*}
  We need to prove that the last product converges. For this it is
  sufficient to show that 
  \[\sum_{j=\ell+1}^\infty\left|\frac{j}{j-1}P_y(\sigma_0>\tau_{2^j}\,|
    \,\sigma_0>\tau_{2^{j-1}})-1\right|<\infty.\] Lemma~\ref{main}
  and Corollary~\ref{uub} allow us to obtain the necessary estimates.
  \begin{align*}
    \frac{j}{j-1}&P_y(\sigma_0>\tau_{2^j}\,|
    \,\sigma_0>\tau_{2^{j-1}})-1\ge
    \frac{j}{j-1}P_{2^{j-1}}(\sigma_0>\tau_{2^j})-1\\&\ge
    \frac{j}{j-1}P_{2^{j-1}}(\sigma_{2^{\ell}}>\tau_{2^j})-1 
    \stackrel{(\ref{ip})}{\ge}
    \frac{j}{j-1}\,\frac{j-1+\frac{1}{j-1}-\ell-\frac1{\ell}}{j+\frac{1}{j}-\ell-\frac1{\ell}}
    -1
    \\&=\frac{\frac{2j-1}{j^2(j-1)^2}-\frac{\ell}{j(j-1)}-\frac{1}{\ell
        j(j-1)}}{1+\frac{1}{j^2}-\frac{\ell}{j}-\frac{1}{\ell j}}\ge
    \frac{2}{j^2(j-1)}-\frac{\ell}{j(j-1)} -\frac{1}{\ell j(j-1)}.
  \end{align*}
  The right hand side of the above expression is a term of an
  absolutely convergent series.

Set $x:=2^{j-1}+2^{2(j-1)/3}$. Then \[\frac{j}{j-1}P_y(\sigma_0>\tau_{2^j}\,|
   \,\sigma_0>\tau_{2^{j-1}})\le
   \frac{j}{j-1}\left(P_x(\sigma_0>\tau_{2^j})+P_y(V_{\tau_{2^{j-1}}}>x\,|\,
     \sigma_0>\tau_{2^{j-1}})\right).\] By (OS) the last term decays exponentially fast in $j$, and we shall concentrate on the first term in the right hand side of the above inequality.
 For all sufficiently large $j$
\begin{align*}
  \frac{j}{j-1}\,&P_x(\sigma_0>\tau_{2^j})-1\\&\le \frac{j}{j-1}\,
  P_x(\sigma_{2^\ell}>\tau_{2^j})-1+\frac{j}{j-1}\,
  P_x(\sigma_0>\tau_{2^j}\,|\,\sigma_{2^\ell}<\tau_{2^j})
  P_x(\sigma_{2^\ell}<\tau_{2^j})\\&\stackrel{(\ref{ip})}{\le}
  \frac{j}{j-1}\,\frac{j-1}{j}-1+\frac{j}{j-1}\,P_{2^\ell}(\sigma_0>\tau_{2^j})
  \frac{j+\frac1j-(j-1)-\frac1{j-1}}{j+\frac1j-\ell-\frac1{\ell}}
  \\&\le \frac{j}{(j-1)(j-\ell-1)}P_{2^\ell}(\sigma_0>\tau_{2^j})
  \stackrel{\mathrm{Cor.\,\ref{uub}}}{\le}\frac{C(\ell)}{(j-1)(j-\ell-1)}.
 \end{align*}
Again the last expression is a term of a convergent series, and we are done.
\end{proof}

The proof of Lemma~\ref{delay} depends on an estimate of the
time the branching process $V$ spends in an interval $[x,2x)$ before
extinction.
\begin{lemma}\label{LmInt}
  For every $\alpha>1$ there is a constant $c_{11}(\alpha)\in(0,1)$ such
  that for all $k,x,y\in\mathbb{N}$ \[P_y\left(\sum_{j=0}^{\sigma_0-1}
    \mathbbm{1}_{[x,2x)}(V_j)>2kx^\alpha\right)\le
  P_y(\rho_0<\sigma_0)(1-c_{11}(\alpha))^k, \] where $\rho_0:=\inf\{j\ge
  0:\,V_j\in[x,2x)\}$;
\end{lemma}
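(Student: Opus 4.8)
The plan is to use the strong Markov property to decompose the time spent by $V$ in $[x,2x)$ into independent ``blocks'', each of which contributes at least $x$ units of time, and then show that with probability bounded below by a constant $c_{11}(\alpha)$ the process leaves $[x,2x)$ and dies before accumulating $2x^\alpha$ units of time in the interval. First, I would introduce the successive return times to $[x,2x)$ spaced at least $x$ apart: following the notation already used in the proof of Lemma~\ref{areabp}, set $\rho_0:=\inf\{j\ge 0:\,V_j\in[x,2x)\}$ and $\rho_{m}:=\inf\{j\ge\rho_{m-1}+x:\,V_j\in[x,2x)\}$ for $m\in\IN$. On the event $\{\sum_{j=0}^{\sigma_0-1}\I_{[x,2x)}(V_j)>2kx^\alpha\}$ the process must return to $[x,2x)$ at least $2kx^{\alpha-1}$ times with the prescribed spacing before extinction (each visit counted in consecutive $\rho$-blocks accounts for at most $x$ time steps), so this event is contained in $\{\rho_{\lceil 2kx^{\alpha-1}\rceil}<\sigma_0\}$, and in particular in $\{\rho_{2k}<\sigma_0\}$ since $x^{\alpha-1}\ge 1$.

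Next, by the strong Markov property at the times $\rho_m$ and the fact that $V_{\rho_m}\in[x,2x)$,
\[
P_y(\rho_{2k}<\sigma_0)\le P_y(\rho_0<\sigma_0)\left(\max_{x\le z<2x}P_z(\rho_1<\sigma_0)\right)^{2k},
\]
so it suffices to produce a constant $c_{11}(\alpha)\in(0,1)$, uniform in $x$ and $z$, with $\max_{x\le z<2x}P_z(\rho_1<\sigma_0)\le 1-c_{11}(\alpha)$. Equivalently, I need a uniform lower bound on the probability that, started from any $z\in[x,2x)$, the process $V$ either dies within the next $x$ steps or, if it is still alive at time $\rho_{m-1}+x$ (more precisely wanders out of and back), goes extinct before its next visit to $[x,2x)$. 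The cleanest route: from $z\in[x,2x)$ run the process for $x$ steps; by the diffusion approximation (DA) the rescaled process $V_{\lfloor x t\rfloor}/x$ is close in law to the solution $Y$ of \eqref{SqB2} started in $[1,2)$, run until it hits a fixed small level $\eps$, and there is a fixed probability $c>0$, independent of $x$ (for $x$ large), that $Y$ drops below, say, level $1/2$ within one unit of time. Combined with the overshoot estimate (OS) to control the rare event that $V$ shoots far above $2x$, and the fact (Corollary~\ref{rc}, or rather a quantitative version via Lemma~\ref{main}/Corollary~\ref{uub}) that from a point below $x/2$ the process dies before ever reaching $2x$ with probability bounded below, I obtain $P_z(\rho_1<\sigma_0)\le 1-c_{11}(\alpha)$ for all large $x$; for the finitely many small values of $x$ one uses the trivial bound together with $P_z(\rho_1<\sigma_0)<1$ (which holds since extinction is certain) and takes a minimum, shrinking $c_{11}(\alpha)$ if necessary.

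The main obstacle is making the ``either dies or escapes and returns'' dichotomy quantitative \emph{uniformly in $x$}. The diffusion approximation only controls $V$ on a bounded time horizon in units of $x$, so I must be careful that the probability of returning to $[x,2x)$ versus dying is extracted from a fixed finite-time window of the limiting diffusion $Y$ — concretely, I would fix a level $\mu\in(0,1)$ and a time $t_0$ so that $P^Y(Y(t_0)<\mu,\ \tau_2>t_0\mid Y(0)=v)\ge 2c$ for all $v\in[1,2]$, transfer this to $V$ via (DA) for $x$ large, then use a hitting estimate for $V$ started below $\mu x$ to reach $2x$ before $0$, which is small by Corollary~\ref{uub} (scaled). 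Patching these pieces, together with handling the overshoot above $2x$ via (OS), gives the uniform constant; the dependence on $\alpha$ enters only through the crude counting step at the very beginning (one could in fact absorb it, but stating $c_{11}(\alpha)$ is harmless). Keeping the exponent in the statement as $2k$ rather than $\lceil 2kx^{\alpha-1}\rceil$ is exactly what lets the bound close cleanly when summed over $k$ in the application to Lemma~\ref{delay}.
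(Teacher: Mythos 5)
Your block decomposition via strong Markov is the right skeleton, and the reduction to bounding $\max_{x\le z<2x}P_z(\rho_1<\sigma_0)$ away from $1$ is exactly what one wants. But the spacing you chose for the $\rho_m$'s is wrong, and this breaks the argument in an essential way. You define $\rho_m=\inf\{j\ge\rho_{m-1}+x:\,V_j\in[x,2x)\}$ and then assert a uniform-in-$x$ bound $\max_{x\le z<2x}P_z(\rho_1<\sigma_0)\le 1-c_{11}(\alpha)$. This is false: because the branching process is critical, from $z\in[x,2x)$ the probability of dying before the next visit to $[x,2x)$ tends to $0$ as $x\to\infty$. Concretely, the claim you invoke — ``from a point below $x/2$ the process dies before ever reaching $2x$ with probability bounded below'' — fails. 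Using Lemma~\ref{main} (the $\ln$ scale) one gets $P_{x/2}(\sigma_{2^{\ell_0}}<\tau_{2x})\asymp 1/\log_2 x$, and hence $P_{x/2}(\sigma_0<\tau_{2x})=O(1/\ln x)\to 0$, so $P_z(\rho_1<\sigma_0)\to 1$. The diffusion approximation does not help here precisely because $0$ is inaccessible for $Y$: after one unit of $Y$-time the process may well be near level $1/2$, but from there it almost surely climbs back to $[1,2]$ rather than dying, and the overshoot estimate only makes it harder to slip past $[x,2x)$ unnoticed.

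The $\alpha>1$ in the statement is not a decoration that ``could be absorbed''; it is what makes a uniform escape probability possible. The step where you discard $x^{\alpha-1}\ge 1$ and pass from $\rho_{\lceil 2kx^{\alpha-1}\rceil}$ to $\rho_{2k}$ is exactly where you throw away what you needed. The paper's proof instead uses spacing $\rho_m=\inf\{r\ge\rho_{m-1}+2x^{\alpha}:\,V_r\in[x,2x)\}$, so that each block contributes at most $2x^\alpha$ to the sum and only $k$ blocks are required; the accompanying escape bound is then
\begin{equation*}
P_{x/2}\bigl(\sigma_0<\tau_{x^\alpha}\bigr)\ \ge\ c(\alpha)>0\quad\text{for all }x\in\IN,
\end{equation*}
which follows from Corollary~\ref{uub} and Remark~\ref{r} because $P_{x/2}(\sigma_y<\tau_{x^\alpha})\approx\bigl(1-\tfrac{\ln(x/2)}{\alpha\ln x}\bigr)\to\tfrac{\alpha-1}{\alpha}>0$. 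So the quantitative dichotomy you were worried about can only be closed with the $x^\alpha$-spaced stopping times and the corresponding escape-to-$x^\alpha$ estimate; with $x$-spacing the block probability is $1-O(1/\ln x)$ and the geometric bound degenerates. (One could alternatively retain the exponent $\lceil 2kx^{\alpha-1}\rceil$ and combine it with a block bound of size $1-c/\ln x$, but that is a different and more delicate bookkeeping than what you wrote, and it is not the route the paper takes.)
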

\begin{proof}
  The proof is very similar to the one of Proposition 6.1 in
  \cite{KM}. There are two differences. First, everywhere in the proof of
  Proposition 6.1 the statement (ii) should be replaced with the
  following: there is a constant $c=c(\alpha)>0$ such that for all
  $x\in\mathbb{N}$
  \begin{equation}
    \label{ext}
    P_{x/2}(\sigma_0<\tau_{x^\alpha})>c.
  \end{equation}
  Second, the stopping times $\rho_j$, $j\in\mathbb{N}$, should be
  defined as follows: $\rho_0$ was defined
  above, \[\rho_j=\inf\{r\ge\rho_{j-1}+2x^\alpha:\,
  V_r\in[x,2x)\},\quad j\ge 1.\] Below we show (\ref{ext}). The rest
  of the proof is the same as that of \cite[Proposition
  6.1]{KM}. 

  To prove (\ref{ext}) we fix a large $y\in\mathbb{N}$ and observe
  that by Corollary~\ref{uub} and Remark~\ref{r} for all $x>2y+1$
  \begin{align*}
    P_{x/2}(\sigma_0<\tau_{x^\alpha})&=P_{x/2}(\sigma_0<\tau_{x^\alpha}\,|\,
    \sigma_y<\tau_{x^\alpha})P_{x/2}(\sigma_y<\tau_{x^\alpha})\\&\ge
    P_y(\sigma_0<\tau_{x^\alpha})P_{x/2}(\sigma_y<\tau_{x^\alpha})=(1-P_y(\sigma_0>\tau_{x^\alpha}))
    P_{x/2}(\sigma_y<\tau_{x^\alpha})\\&\ge
    \left(1-\frac{c_{10}(y)}{\alpha\ln x}\right)\left(1-\frac{\ln(x/2)}
      {\alpha \ln x}\right)\ge \left(1-\frac{c_{10}(y)}{\alpha\ln
        x}\right)\frac{\alpha-1}{\alpha}>c>0.
  \end{align*}
  Adjusting the constant $c$ if necessary we obtain (\ref{ext}) for
  all $x\in\mathbb{N}$.
\end{proof}
\begin{proof}[Proof of Lemma~\ref{delay}]
  For every $n\in\mathbb{N}$ let $k\in\mathbb{N}$ be such that
  $2^{k-1}\le n<2^k$. We can always write $\alpha$ as
  $\alpha'+\lambda$ where $\alpha'>1$ and $\lambda>0$. Then
  \begin{align*}
    P_y&\left(\sum_{j=0}^{\sigma_0-1}\mathbbm{1}_{\{V_j\le
        n\}}>n^\alpha\right)\le
    P_y\left(\sum_{j=0}^{\sigma_0-1}\mathbbm{1}_{\{V_j<2^k\}}>2^{\alpha(k-1)}\right)\\&\le
    P_y\left(\sum_{j=0}^{\sigma_0-1}\sum_{i=1}^k\mathbbm{1}_{[2^{i-1},2^i)}
      (V_j) >2^{\lambda(k-1)}(1-2^{-\alpha'})\sum_{i=1}^k
      2^{\alpha'(i-1)}\right)\\&\le
    \sum_{i=1}^kP_y\left(\sum_{j=0}^{\sigma_0-1}\mathbbm{1}_{[2^{i-1},2^i)}
      (V_j)>2^{\lambda(k-1)}(1-2^{-\alpha'})2^{\alpha'(i-1)}\right)\\
    &\stackrel{\mathrm{Lem.}\
      \ref{LmInt}}{\le}k(1-c_{11}(\alpha'))^{\floor{2^{\lambda(k-1)-1}(1-2^{-\alpha'})}}.
  \end{align*}
  Multiplying by $\ln n$ which is less than $k\ln 2$ we get that as $n\to\infty$
\begin{equation*}
  (\ln n)\,P_y\left(\sum_{j=0}^{\sigma_0-1}\mathbbm{1}_{\{V_j\le
        n\}}>n^\alpha\right)\le (\ln 2) k^2(1-c_{11}(\alpha'))^{\floor{2^{\lambda(k-1)-1}(1-2^{-\alpha'})}}\to 0.\qedhere
\end{equation*}
\end{proof}

Before we turn to the proof of Lemma~\ref{one} we present its
continuous space-time version.
\begin{lemma}\label{ones}
  Let $Y$ be the diffusion defined by (\ref{SqB2}) which starts at $1$ and $\tau_\epsilon:=\inf\{t\ge 0:\, Y(t)=\epsilon\}$. Then for every $h>0$
  \begin{align}
    \label{time} &\lim_{\epsilon\to
      0}P_1^Y(\tau_\epsilon>h)=1;\\ \label{int} &\lim_{\epsilon\to 0}
    P_1^Y\left(\int_0^{\tau_\epsilon}Y(t)\,dt>h\right)=1.
  \end{align}
\end{lemma}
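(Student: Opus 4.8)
The plan is to deduce both statements from two soft properties of the diffusion $Y$ of \eqref{SqB2}. Recall that $Y$ is one half of a squared Bessel process of dimension $2$; in particular $\ln Y(t)$ is a local martingale (as already used in the proof of Lemma~\ref{elem}), so optional stopping gives, for all $0<\epsilon<1<R$,
\[
  P_1^Y(\tau_\epsilon<\tau_R)=\frac{\ln R}{\ln R+\ln(1/\epsilon)},
\]
by the same computation that produced \eqref{hit}; hence, for fixed $R>1$, $P_1^Y(\tau_R<\tau_\epsilon)\to 1$ as $\epsilon\downarrow 0$. Moreover $Y$ enjoys Brownian scaling: for $c>0$ the process $c^{-1}Y(c\,\cdot)$ solves \eqref{SqB2} from $Y(0)/c$ (a one-line It\^o computation, or the classical scaling of squared Bessel processes). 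Finally I use the standard facts that $Y$ a.s.\ never hits $0$ and a.s.\ reaches every level of $(0,\infty)$, so that $\tau_x\in(0,\infty)$ $P_1^Y$-a.s.\ for every $x\neq 1$.

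Statement \eqref{time} is then immediate: since $Y$ has continuous paths and never hits $0$, $\inf_{t\le h}Y(t)>0$ a.s., so by monotone convergence
\[
  P_1^Y(\tau_\epsilon>h)=P_1^Y\Big(\inf_{t\le h}Y(t)>\epsilon\Big)\ \uparrow\ P_1^Y\Big(\inf_{t\le h}Y(t)>0\Big)=1\qquad\text{as }\epsilon\downarrow 0.
\]

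For \eqref{int} the heuristic is that, with probability tending to $1$ as $\epsilon\downarrow 0$, the process first climbs to a high level $R$ before ever dropping to $\epsilon$, and the area accumulated during the subsequent descent from $R$ alone exceeds $h$ once $R$ is large. To make this precise I would fix $h>0$ and $R>1$ and estimate, for $\epsilon<R/2$,
\[
  P_1^Y\Big(\int_0^{\tau_\epsilon}Y(t)\,dt\le h\Big)\le P_1^Y(\tau_\epsilon<\tau_R)+P_1^Y\Big(\int_{\tau_R}^{\tau_\epsilon}Y(t)\,dt\le h,\ \tau_R<\tau_\epsilon\Big),
\]
using $\int_0^{\tau_\epsilon}Y\ge\int_{\tau_R}^{\tau_\epsilon}Y$ on $\{\tau_R<\tau_\epsilon\}$. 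On $\{\tau_R<\tau_\epsilon\}$ we have $Y(\tau_R)=R$, so by the strong Markov property at $\tau_R$ and Brownian scaling with factor $R$, the $\mathcal{F}_{\tau_R}$-conditional law of $\int_{\tau_R}^{\tau_\epsilon}Y(t)\,dt$ is that of $R^2\int_0^{\tau_{\epsilon/R}}Y(s)\,ds$ under $P_1^Y$; and since a path started at $1$ must hit $1/2$ before $\epsilon/R<1/2$, this is $\ge R^2A$ with $A:=\int_0^{\tau_{1/2}}Y(s)\,ds\ge\tfrac12\tau_{1/2}>0$ a.s. Hence the second term is at most $P_1^Y(R^2A\le h)$, and letting $\epsilon\downarrow 0$ with $R$ fixed and then $R\to\infty$ gives
\[
  \limsup_{\epsilon\to 0}P_1^Y\Big(\int_0^{\tau_\epsilon}Y(t)\,dt\le h\Big)\le\lim_{R\to\infty}P_1^Y(R^2A\le h)=0,
\]
which is \eqref{int}.

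The only point demanding care is the strong-Markov-plus-scaling step for \eqref{int}: one must check that $\{\tau_R<\tau_\epsilon\}$ is $\mathcal{F}_{\tau_R}$-measurable, that the process restarted at $\tau_R$ on this event is a fresh copy of $Y$ issued from $R$, and that this copy a.s.\ subsequently reaches $\epsilon$ — all routine consequences of the strong Markov property of $Y$ and the boundary facts recalled above, so I do not expect a genuine obstacle. With Lemma~\ref{ones} in hand, the discrete-time Lemma~\ref{one} should follow by transferring \eqref{time} and \eqref{int} through the diffusion approximation (Lemma~\ref{DA}) together with an estimate ruling out an atypically long sojourn of $V$ below level $\epsilon n$.
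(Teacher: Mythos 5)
Your argument is correct. The paper itself gives no proof of Lemma~\ref{ones}, stating only that it ``follows from the fact that $0$ is an inaccessible point for the two-dimensional squared Bessel process'' and leaving the details to the reader; so there is nothing to compare line-by-line. Your fill-in is a sensible one: \eqref{time} is immediate from inaccessibility plus continuity from below, and for \eqref{int} the combination of the $\ln$-scale hitting probabilities (the quantitative form of inaccessibility) with the Brownian scaling of squared Bessel processes is a clean way to push the area to infinity without having to establish separately that $\int_0^\infty Y(t)\,dt=\infty$ a.s.\ and couple it with \eqref{time}. The strong-Markov-plus-scaling step that you flag as the delicate point is indeed routine for this diffusion, and the inequality $\int_0^{\tau_{\epsilon/R}}Y\ge\int_0^{\tau_{1/2}}Y=:A>0$ a.s.\ closes the argument correctly.
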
 
Lemma~\ref{ones} follows from the fact that $0$ is an inaccessible
point for the two-dimensional squared Bessel process. The details are
left to the reader.
\begin{proof}[Proof of Lemma~\ref{one}]
  We prove only (\ref{survive}), since
  the proof of (\ref{time0}) is the same (it uses
  (\ref{time}) instead of (\ref{int})). Notice that
\[\lim_{n\to\infty}P_n\left(\sum_{i=0}^{\sigma_0-1}V_i>hn^2\right)\ge
\lim_{\epsilon\to
  0}\lim_{n\to\infty}P_n\left(\sum_{i=0}^{\sigma_{\epsilon n}-1}
  V_i>hn^2\right).\] By the diffusion approximation, for every
$\epsilon\in(0,1)$ \[\lim_{n\to\infty}P_n
\left(\sum_{i=0}^{\sigma_{\epsilon
      n}-1}V_i>hn^2\right)=P^Y_1\left(\int_0^{\tau_\epsilon}Y(t)\,dt
  >h\right),\] and by (\ref{int}),
  \[  \lim_{\epsilon\to 0}P^Y_1\left(\int_0^{\tau_\epsilon}Y(t)\,dt
    >h\right)=1. \qedhere \]
\end{proof}

\section{Continuity of maps $\phi$ and $\psi$}
Denote by $\mathrm{meas}\,A$ the Lebesgue measure of set $A$.
\begin{proposition}
\label{PrPhi}
Let $P$ be a probability measure supported on $C([0, 1])$ such that $P$-a.s.
\begin{equation}\label{zer}
  \mathrm{meas}\{t\in [0,1]: \omega(t)=0\}=0.
\end{equation}
Then the map $\phi$ defined by \eqref{phi} is $P$-a.s.\ continuous.
\end{proposition}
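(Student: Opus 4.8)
The plan is to show directly that $\phi$ is continuous at every $\omega\in C([0,1])$ with $\mathrm{meas}\{t\in[0,1]:\omega(t)=0\}=0$. Since by hypothesis $P$ assigns probability one to the set of such $\omega$, and the set of continuity points of any real-valued function on the metric space $D([0,1])$ is a $G_\delta$ (hence Borel), this gives the proposition.

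So I would fix one such $\omega$ together with an arbitrary sequence $\omega_n\to\omega$ in $(D([0,1]),J_1)$; by metrizability of $J_1$, continuity at $\omega$ is the same as sequential continuity, so it suffices to prove $\phi(\omega_n)\to\phi(\omega)$. The first step is to upgrade Skorokhod convergence to uniform convergence: because the limit $\omega$ is continuous on the compact interval $[0,1]$, one has $\|\omega_n-\omega\|_\infty\to 0$ (the standard reduction referenced in the footnote above and in \cite[Section~15]{B99}). The second step is a pointwise sign argument: for each $t$ with $\omega(t)\ne 0$, as soon as $n$ is large enough that $\|\omega_n-\omega\|_\infty<|\omega(t)|$, the values $\omega_n(t)$ and $\omega(t)$ are both positive or both negative, so $\mathbbm{1}_{[0,\infty)}(\omega_n(t))=\mathbbm{1}_{[0,\infty)}(\omega(t))$. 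Hence $\mathbbm{1}_{[0,\infty)}(\omega_n(t))\to\mathbbm{1}_{[0,\infty)}(\omega(t))$ for every $t$ outside $\{t\in[0,1]:\omega(t)=0\}$, which by our choice of $\omega$ is Lebesgue-null.

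The last step is the bounded convergence theorem: the integrands are uniformly bounded by $1$ and $[0,1]$ has finite measure, so
\[
\phi(\omega_n)=\int_0^1\mathbbm{1}_{[0,\infty)}(\omega_n(t))\,dt\;\longrightarrow\;\int_0^1\mathbbm{1}_{[0,\infty)}(\omega(t))\,dt=\phi(\omega).
\]
I do not expect a genuine obstacle here: the argument amounts to saying that $\omega$ lies outside the discontinuity set of $\phi$, which consists precisely of paths spending positive Lebesgue time at level $0$. The only step deserving (routine) care is the passage from $J_1$ to uniform convergence, and it is exactly there that the continuity of the limit path $\omega$ — guaranteed for $W_{\delta,-\delta}$ in the intended application — enters.
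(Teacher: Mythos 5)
Your proof is correct and follows essentially the same route as the paper's: both reduce $J_1$-convergence to uniform convergence via the continuity of the limit path, and both exploit that the set $\{t:\omega(t)=0\}$ is Lebesgue-null. The only difference is cosmetic — you conclude via pointwise convergence of indicators plus bounded convergence, whereas the paper phrases the same estimate as an explicit $\epsilon$-$\nu$ sandwich bounding $|\phi(\omega)-\phi(\varpi)|$ by $\mathrm{meas}\{t:|\omega(t)|\le\nu\}$.
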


\begin{proof}
  It is sufficient to show continuity at every $\omega\in C([0,1])$
  which satisfies (\ref{zer}). Let $\varpi\in D([0,1])$ and
  $\displaystyle \sup_{t\in [0,1]} |\varpi(t)-\omega(t)|\leq
  \nu$.\footnote{Recall that for $\omega\in C([0,1])$ the Skorokhod convergence
  to $\omega$ implies the uniform convergence (see \cite[the last paragraph on
  p.\,128]{B99}). Thus, it is sufficient to work with the sup norm.} Then
\[ \int_0^1 \mathbbm{1}_{[\nu,\infty)}(\omega(t))\,dt \leq
\phi(\varpi)\leq \int_0^1 \mathbbm{1}_{[-\nu,\infty)}(\omega(t))\,dt\ \ \text{and} \]
\begin{equation}\label{cphi}
  |\phi(\omega)-\phi(\varpi)|\leq \int_0^1 \mathbbm{1}_{[-\nu,\nu]}(\omega(t))\,dt=\meas\{t\in[0,1]:\,-\nu\le \omega(t)\le \nu\}. 
\end{equation}
Since $\{t\in[0,1]:\,-\nu\le \omega(t)\le \nu\}\searrow \{t\in[0,1]:\,\omega(t)=0\}$ and $ \meas\{t\in [0,1]:\, \omega(t)=0\}=0 $, given $\eps>0$ we can choose $\nu>0$ such
that the right-hand side of (\ref{cphi}) is less than $\epsilon$.
\end{proof}

\begin{proposition}
\label{PrPsi}
Let $P$ be a probability measure supported on $C([0, \infty))$ such that $P$-a.s.
\begin{equation}
  \label{zerbel}
  \meas\{t\ge 0:\, \omega(t)=0\}=0\ \text{ and } \ \meas\{t\ge 0:
\omega(t)<0\}=\infty. 
\end{equation}
Then the map $\psi$ defined by \eqref{psi} is $P$-a.s.\ continuous.
\end{proposition}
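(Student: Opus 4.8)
The plan is to follow the pattern of Proposition~\ref{PrPhi}: since $P$ is concentrated on paths satisfying \eqref{zerbel}, it suffices to show that $\psi$ is continuous at every fixed $\omega\in C([0,\infty))$ for which \eqref{zerbel} holds. Fix such an $\omega$ and an $R>0$. Write $A(t):=\meas\{r\le t:\ \omega(r)<0\}$ and, for $\varpi\in D([0,\infty))$, $A_\varpi(t):=\meas\{r\le t:\ \varpi(r)<0\}$, so that $T^-=T^-_\omega$ and $T^-_\varpi$ are the right-continuous inverses appearing in \eqref{psi}. Because $\omega$ is continuous, $J_1$-convergence $\varpi_n\to\omega$ in $D([0,\infty))$ is the same as local uniform convergence, and uniform convergence always implies $J_1$-convergence; hence it is enough to show that $\varpi_n\to\omega$ uniformly on compacts forces $\psi(\varpi_n)\to\psi(\omega)$ uniformly on $[0,R]$. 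First I would record the structure of $A$: it is continuous and nondecreasing; by the second clause of \eqref{zerbel}, $A(\infty)=\infty$, so $T^-$ is finite everywhere and we may fix $S>0$ with $A(S)>R$; every maximal interval $[p,q]$ on which $A$ is constant is a maximal interval on which $\omega\ge 0$, and there $\omega(p)=\omega(q)=0$ (continuity of $\omega$ together with the fact that $A$ strictly increases just outside $[p,q]$). Since $A$ (resp.\ $A_\varpi$) first exceeds a level $u$ at $T^-(u)$ (resp.\ $T^-_\varpi(u)$), one gets $\omega(T^-(u))\le 0$ and $\varpi(T^-_\varpi(u))\le 0$, so $\psi(\omega)$ and $\psi(\varpi)$ are nonnegative; moreover $\psi(\omega)$ is continuous on $[0,R]$, since at a jump of $T^-$ — corresponding to a flat interval $[p,q]$ of $A$ — the one-sided limits of $\psi(\omega)$ both equal $-\omega(p)=-\omega(q)=0$.

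The one quantitative input is the uniform convergence of occupation times: if $\sup_{t\le S}|\varpi(t)-\omega(t)|\le\delta$, then
\[
\sup_{t\le S}\bigl|A_\varpi(t)-A(t)\bigr|\ \le\ \meas\{r\le S:\ |\omega(r)|\le\delta\},
\]
because $\mathbbm{1}_{\{\varpi(r)<0\}}\ne\mathbbm{1}_{\{\omega(r)<0\}}$ forces $|\omega(r)|\le\delta$; and the right-hand side decreases to $\meas\{r\le S:\ \omega(r)=0\}=0$ as $\delta\downarrow 0$ by \eqref{zerbel} and monotone convergence. Consequently, for $n$ large, $A_{\varpi_n}(S)>R\ge u$ for every $u\le R$, so $T^-_{\varpi_n}(u)\le S$; and $A_{\varpi_n}(T^-_{\varpi_n}(u))=u$ by continuity of $A_{\varpi_n}$, whence $A\bigl(T^-_{\varpi_n}(u)\bigr)\to u$.

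Now the main step, by contradiction. Suppose $\sup_{u\in[0,R]}|\psi(\varpi_n)(u)-\psi(\omega)(u)|\not\to 0$; along a subsequence there are $u_n\in[0,R]$ with $|\psi(\varpi_n)(u_n)-\psi(\omega)(u_n)|\ge\eps_0>0$, and, passing to a further subsequence, $u_n\to u_*\in[0,R]$ and $T^-_{\varpi_n}(u_n)\to t_*\in[0,S]$. Uniform convergence together with continuity of $\omega$ gives $\psi(\varpi_n)(u_n)=-\varpi_n(T^-_{\varpi_n}(u_n))\to -\omega(t_*)$; also $A(t_*)=\lim A\bigl(T^-_{\varpi_n}(u_n)\bigr)=u_*$, and comparing $A_{\varpi_n}$ with $A$ (if $A(t)<u_*$ then eventually $A_{\varpi_n}(t)<u_n$, so $t\le T^-_{\varpi_n}(u_n)$; similarly on the other side) one gets $\sup\{t:\ A(t)<u_*\}\le t_*\le\inf\{t:\ A(t)>u_*\}$, i.e.\ $t_*$ lies in the closed flat interval $[p,q]$ of $A$ at level $u_*$, with $T^-(u_*)=q$. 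If $p=q$ then $t_*=T^-(u_*)$ and $\psi(\varpi_n)(u_n)\to -\omega(T^-(u_*))=\psi(\omega)(u_*)$. If $p<q$, then $\omega\ge 0$ on $[p,q]$ gives $\omega(t_*)\ge 0$, while $\omega(t_*)=\lim\varpi_n(T^-_{\varpi_n}(u_n))\le 0$, so $\omega(t_*)=0$; since also $\omega(T^-(u_*))=\omega(q)=0$, again $\psi(\varpi_n)(u_n)\to 0=\psi(\omega)(u_*)$. In either case, using continuity of $\psi(\omega)$ at $u_*$, $\psi(\varpi_n)(u_n)-\psi(\omega)(u_n)\to\psi(\omega)(u_*)-\psi(\omega)(u_*)=0$, contradicting $\ge\eps_0$. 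Hence $\psi(\varpi_n)\to\psi(\omega)$ uniformly on $[0,R]$, so $\psi$ is continuous at $\omega$.

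I expect the main obstacle to be exactly that the generalized inverse $T^-$ — and hence $t'=T^-_{\varpi_n}(u)$ — need not depend continuously on its argument: near a jump of $T^-$, $t'$ can sit far from $T^-(u)$, so a naive estimate on $\omega\circ T^-_{\varpi_n}-\omega\circ T^-$ is hopeless, and the compactness/subsequence device is what lets one sidestep tracking this uniformly. That argument genuinely uses both clauses of \eqref{zerbel}: the measure-zero clause to make $A_{\varpi_n}\to A$ uniformly (otherwise a uniformly small perturbation could move a positive-measure block of time across $0$ and push $t_*$ out of the flat interval), and the infinite-time-below-zero clause to keep $T^-$, hence $\psi(\omega)$, defined on all of $[0,R]$; the reflection fact $\varpi_n(T^-_{\varpi_n}(u_n))\le 0$, giving $\omega(t_*)\le 0$ in the limit, is the remaining ingredient that closes the degenerate-flat-interval case. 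The routine points — continuity of $\psi(\omega)$, measurability of $\psi$ as a map into $D([0,R])$, and the boundary behaviour at $u=0$ — I would leave to the reader, as the paper does for its analogous lemmas.
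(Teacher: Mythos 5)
Your proof is correct but takes a genuinely different route from the paper's. The paper gives a direct, quantitative $\eps$--$\nu$ argument: it fixes a modulus-of-continuity scale $h$ for $\omega$, chooses $\nu$ so that $\meas\{t\le M:\ |\omega(t)|\le\nu\}<h$, bounds $|A^-_\varpi(T^-_\omega(t))-t|$ by $h$ using the same occupation-time estimate you use, and then runs a four-way case analysis (depending on whether $A^-_\varpi(T^-_\omega(t))$ under- or over-shoots $t$, and on whether $\omega$ or $\varpi$ changes sign on the interval between $T^-_\omega(t)$ and $T^-_\varpi(t)$) to bound $|\omega(T^-_\omega(t))-\omega(T^-_\varpi(t))|$ either through the modulus of continuity when the two inverse times are provably close, or through the smallness of $\omega$ near its zeroes otherwise. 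You instead argue by sequential compactness and contradiction: after establishing uniform convergence of the occupation-time processes and continuity of $\psi(\omega)$, you extract a subsequence with $u_n\to u_*$ and $T^-_{\varpi_n}(u_n)\to t_*$, show $A(t_*)=u_*$, and then use the reflection fact $\varpi(T^-_\varpi(u))\le 0$ to pin $\omega(t_*)=0=\omega(T^-(u_*))$ when $t_*$ lands inside a nontrivial flat interval of $A$. Both proofs rest on the same three ingredients --- the measure-zero clause of \eqref{zerbel} yielding uniform convergence of $A_\varpi$ to $A_\omega$, the vanishing of $\omega$ at the endpoints of flat intervals, and the one-sided signs $\omega(T^-_\omega(t))\le 0$, $\varpi(T^-_\varpi(t))\le 0$ --- but your compactness device replaces the explicit case analysis with a single unified limit and never estimates $|T^-_\omega(t)-T^-_\varpi(t)|$ directly, which is the source of the paper's intricacy. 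The cost is that your argument is non-constructive (no explicit dependence of $\nu$ on $\eps$), which is irrelevant here. One small point to flag: you assert $\omega(p)=0$ at the left endpoint of every maximal flat interval $[p,q]$, which needs $p>0$; when $p=0$ (the level-zero flat interval) this may fail, but it only enters the left-continuity of $\psi(\omega)$ at $u_*=0$, which is vacuous at the left endpoint of $[0,R]$, so the gap is harmless.
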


\begin{proof}
  It is sufficient to show continuity at every
  $\omega\in C([0,\infty))$ which satisfies (\ref{zerbel}). Fix such
  an $\omega$ and let $\epsilon>0$. Recall that
  $T_\omega^-(s):=\inf\{t\ge
  0:\,\meas\{r\in[0,t]:\,\omega(r)<0\}>s\}$.
  Given $R>0$ let $M$ be chosen so that $T_\omega^-(M)=R+1.$ We need
  to find $\nu$ such that if $\varpi\in D([0, \infty))$ satisfies
\begin{equation}
\label{CloseWw}
\sup_{t\in [0, M]} |\varpi(t)-\omega(t)|<\nu
\end{equation}
 then
\begin{equation}
\label{ClosePsi}
 \sup_{t\in [0, R]} |\omega(T_\omega^-(t))-\varpi(T_\varpi^-(t))|<\eps. 
\end{equation} 
We denote $\lim_{s\uparrow t} \varpi(s)$ by $\varpi(t-)$.  Note that due to \eqref{CloseWw} for $t\in (0, M]$ we have
\begin{equation}
\label{CloseWw-}
 |\varpi(t-0)-\omega(t)|<\nu. 
 \end{equation} 
Choose $h$ such that
\begin{equation}
\label{OscW}
 \sup_{t', t''\in [0, M]: |t'-t''|<3 h} |\omega(t')-\omega(t'')|<\eps/8. 
 \end{equation}
Next choose $\nu<\eps/8$ such that 
\begin{equation}
\label{MesNear0}
\meas\{t\in[0,M]:\, |\omega(t)|\le \nu\}<h. 
\end{equation}
Let $\varpi$ satisfy \eqref{CloseWw}. Then for $t\in[0,R]$ we have
\begin{align}
 |\omega(T_\omega^-(t))-\varpi(T_\varpi^-(t))| &\leq |\omega(T_\omega^-(t))-\omega(T_\varpi^-(t))|+
|\omega(T_\varpi^-(t))-\varpi(T_\varpi^-(t))|\nonumber\\
&\leq 
|\omega(T_\omega^-(t))-\omega(T_\varpi^-(t))|+\nu.\label{s1}
\end{align}
For $f\in D([0,\infty))$ let
$A^-_f(t):=\meas\{s\in[0,t]: f(s)<
0\}=\int_0^t\I_{(-\infty,0)}(f(s))\,ds.$
The definition implies that $A^-_f\in C([0,\infty))$ and
$A^-_f(T^-_f(t))\equiv t$.  Note that due to \eqref{CloseWw} we have
\[ A^-_{\omega+\nu}(s) \leq  A^-_\varpi(s) \leq A^-_{\omega-\nu}(s) \]
and due to \eqref{MesNear0} we have
\[ A^-_{\omega-\nu}(s)-h \leq A^-_{\omega}(s)\leq A^-_{\omega+\nu}(s)+h. \]
Therefore,
\[ t-h=A_\omega^-(T^-_\omega(t))-h\leq A_{\omega+\nu}^-(T^-_\omega(t))\leq
A_\varpi^-(T^-_\omega(t))\leq A_{\omega-\nu}^-(T^-_\omega(t))\leq A_\omega^-(T^-_\omega(t))+h=t+h. 
\]
We now consider 4 cases.
\begin{itemize}
\item [(I)] $t-h\leq A^-_\varpi(T^-_\omega(t))\leq t$ (which implies
  that $T^-_\omega(t)\le T^-_\varpi(t)$) and $\omega(u)<0$ for
  $u\in [T^-_\omega(t), T^-_\varpi(t)]$.
\end{itemize}
Then, since $A^-_\varpi(s)-A^-_\varpi(r)\ge s-r-\meas\{u\in[r,s]:\,\varpi(u)\ge 0\}$ for $s\ge r$ and $\varpi(u)\ge 0\ \overset{(\ref{CloseWw})}{\Rightarrow} \ \omega(u)\ge -\nu$ for all $u\in[T^-_\omega(t), T^-_\varpi(t)]$, we have by \eqref{MesNear0} that
\[h\geq A^-_\varpi(T^-_\varpi(t))-A^-_\varpi(T^-_\omega(t))\geq T^-_\varpi(t)-T^-_\omega(t)-h.
\]
Hence, $T^-_\varpi(t)-T^-_\omega(t)\leq 2h$ and so by \eqref{OscW} 
$|\omega(T^-_\varpi(t))-\omega(T^-_\omega(t))|\leq \eps/8. $
\begin{itemize}
\item [(II)] $t-h\leq A^-_\varpi(T^-_\omega(t))\leq t$ and $\omega(\cdot)$ has zeroes on $[T^-_\omega(t), T^-_\varpi(t)].$
\end{itemize}
Let $a$ be the first zero and $b$ be the last zero of $\omega(\cdot)$ on $[T^-_\omega(t), T^-_\varpi(t)]$. Notice that $\omega(T^-_\omega(t))\le 0$. Thus, $\omega(s)\le 0$ for  $s\in[T^-_\omega(t),a]$ and 
the same argument as in case (I) shows that 
\[ |\omega(T^-_\omega(t))|=|\omega(T^-_\omega(t))-\omega(a)|\leq \eps/8. \]
Moreover if $\omega(T^-_\varpi(t))\leq 0$ 
then by the same argument we also have
\[ |\omega(T^-_\varpi(t))|=|\omega(T^-_\varpi(t))-\omega(b)|\leq
\eps/8. \]
On the other hand, if $\omega(T^-_\varpi(t))> 0$ then, since
 $\omega$ is continuous and
$\varpi(T^-_\varpi(t)-))\leq 0$, we get
 \[|\omega(T^-_\varpi(t))|=\omega(T^-_\varpi(t)) \leq \omega(T^-_\varpi(t))- \varpi(T^-_\varpi(t)-)<\nu<\eps/8. \]
In either case we obtain
\[ |\omega(T^-_\varpi(t))-\omega(T^-_\omega(t))|\leq \eps/4. \]
\begin{itemize}
\item [(III)] $t<A^-_\varpi(T^-_\omega(t))\leq t+h$ and $\varpi(u)<0$ for $u\in [T^-_\varpi(t), T^-_\omega(t)].$
\end{itemize}
 Then $h\geq A^-_\varpi(T^-_\omega(t))-A^-_\varpi(T^-_\varpi(t))=
T^-_\omega(t)-T^-_\varpi(t)$,
and so by \eqref{OscW} \[|\omega(T^-_\omega(t))-\omega(T^-_\varpi(t))|\leq \eps/8. \]
\begin{itemize}
\item [(IV)] $t< A^-_\varpi(T^-_\omega(t))\leq t+h$ and $\varpi(\cdot)$ 
takes non-negative values somewhere 
on $[T^-_\varpi(t), T^-_\omega(t)].$
\end{itemize}
Let
\begin{align*}
  a&=\inf\{u\in[T^-_\varpi(t), T^-_\omega(t)]:\, \varpi(u)\ge 0\}\ \text{ and
     }\\ b&=\inf\{u\in[T^-_\varpi(t), T^-_\omega(t)]:\, \varpi(u)\varpi(s)> 0\ \forall s\in[u,T^-_\omega(t))\}.
\end{align*}

Observe that by \eqref{CloseWw} and continuity of $\omega(\cdot)$ it holds that $|\omega(a)|<\nu$ and
$|\omega(b)|<\nu.$ Next, the same argument as in case (III) shows that
$|\omega(a)-\omega(T^-_\varpi(t))|\leq \eps/8.$ Moreover, if
$\varpi(T^-_\omega(t)-)< 0$ then we also have that
$|\omega(T^-_\omega(t))-\omega(b)|\leq \eps/8$, whereas if
$\varpi(T^-_\omega(t)-)\ge  0$ then, since $\omega(T^-_\omega(t))\leq 0$, we
conclude that
\[|\omega(T^-_\omega(t))|\leq |\omega(T^-_\omega(t))-\varpi(T^-_\omega(t)-)|< \nu<\eps/8. \]
Putting everything together we see that in case (IV)
\[ |\omega(T^-_\omega(t))-\omega(T^-_\varpi(t))|\leq
|\omega(T^-_\omega(t))-\omega(b)|+|\omega(b)-\omega(a)|+|\omega(a)-\omega(T^-_\varpi(t))|<\epsilon/2
.\] 

Combining \eqref{s1} with the above estimates for cases (I)--(IV) we obtain that for all $t\in[0,R]$
\[ |\omega(T^-_\omega(t))-\varpi(T^-_\varpi(t))|<5\eps/8. \]
This implies \eqref{ClosePsi} and concludes the proof of the proposition.
\end{proof}

{\bf Acknowledgment:} D.\,Dolgopyat was partially supported by the NSF
grant DMS 1362064.  E.\,Kosygina was partially supported by the Simons
Foundation through a Collaboration Grant for Mathematicians \#209493
and Simons Fellowship in Mathematics, 2014-2015. Parts
  of this work were done during authors' visit to the Fields Institute
  in Spring of 2011. The paper was completed during the second
  author's stay at the Institut Mittag-Leffler in the Fall of
  2014. We thank both Institutes for support and excellent working
  conditions.

\bigskip

{\sc \small
\begin{tabular}{ll}
Department of Mathematics& \hspace*{30mm}Department of Mathematics\\
University of Maryland& \hspace*{30mm}Baruch College, Box B6-230\\
4417 Mathematics Building&\hspace*{30mm}One Bernard Baruch Way\\
College Park, MD 20742, USA &\hspace*{30mm}New York, NY 10010, USA\\
{\verb+dmitry@math.umd.edu+}& \hspace*{30mm}{\verb+elena.kosygina@baruch.cuny.edu+}
\end{tabular}\vspace*{2mm}

}

\end{document}